\theoremstyle{plain}
\newtheorem{thm}{Theorem}[section]
\newtheorem{lemma}[thm]{Lemma}
\newtheorem{corollary}[thm]{Corollary}
\newtheorem{proposition}[thm]{Proposition}
\theoremstyle{definition}
\newtheorem{remark}[thm]{Remark}
\newtheorem{defin}[thm]{Definition}
\newtheorem{assumption}[thm]{Assumption}
\newtheorem{example}[thm]{Example}
\newtheorem{question}[thm]{Question}
\numberwithin{equation}{section}
\newcommand{\sA}{{\mathcal A}}
\newcommand{\sB}{{\mathcal B}}
\newcommand{\sC}{{\mathcal C}}
\newcommand{\sE}{{\mathcal E}}
\newcommand{\sF}{{\mathcal F}}
\newcommand{\sH}{{\mathcal H}}
\newcommand{\sI}{{\mathcal I}}
\newcommand{\sK}{{\mathcal K}}
\newcommand{\sL}{{\mathcal L}}
\newcommand{\sM}{{\mathcal M}}
\newcommand{\sR}{{\mathcal R}}
\newcommand{\sS}{{\mathcal S}}
\newcommand{\sU}{{\mathcal U}}
\newcommand{\sX}{{\mathcal X}}
\newcommand{\Z}{{\mathbb Z}}
\newcommand{\PP}{\ensuremath{\mathbb{P}}}
\newcommand{\CC}{\ensuremath{\mathbb{C}}}
\newcommand{\ZZ}{\ensuremath{\mathbb{Z}}}
\newcommand{\hol}{\ensuremath{\mathcal{O}}}
\newenvironment{dedication}
        {\begin{quotation}\begin{center}\begin{em}}
        {\par\end{em}\end{center}\end{quotation}}
\newcommand\om{\omega}
\newcommand\la{\lambda}
\newcommand\s{\sigma}
\newcommand\al{\alpha}
\newcommand\De{\Delta}
\newcommand\ga{\gamma}
\newcommand\de{\delta}
\newcommand\e{\epsilon}
\newcommand{\Lam}{\Lambda}
\DeclareMathOperator{\Hom}{Hom}
\DeclareMathOperator{\rank}{rank}
\DeclareMathOperator{\Heis}{Heis}
\newcommand{\HH}{\ensuremath{\mathbb{H}}}
\newcommand{\ra}{\ensuremath{\rightarrow}}
\def\bea{\begin{eqnarray*}}
\def\eea{\end{eqnarray*}}
\newcommand\dual{\mathrel{\raise3pt\hbox{$\underline{\mathrm{\thinspace d
\thinspace}}$}}}
\newcommand\qe{\ifhmode\unskip\nobreak\fi\quad $\Box$}       
\newcommand\iso{\cong}
\def\BOX{\hfill\lower.5\baselineskip\hbox{$\Box$}}
\newtheorem{theo}{Theorem}[section]
\newtheorem{remarkk}[theo]{Remark}
\DeclareMathOperator{\Sing}{Sing}
\title [$p_g=q=2$
Surfaces ]{On the components of the Main Stream of the moduli space of surfaces 
of general type with $p_g=q=2$.}
\author{Massimiliano Alessandro, Fabrizio Catanese}
\address {Mathematisches Institut der Universit\"at Bayreuth\\
NW II,  Universit\"atsstr. 30\\
95447 Bayreuth}
\email{alessandro@dima.unige.it}
\email{fabrizio.catanese@uni-bayreuth.de}
\address{Universit\`a degli Studi di Genova, DIMA Dipartimento di Matematica, I-16146 Genova}
\address{Korea Institute for Advanced Study, Hoegiro 87, Seoul, 
133--722.}
\thanks{AMS Classification: 14J29, 14J10, 14D15, 14D06, 14J60, 14L30.\\
 }
\date{\today}
\begin{document}

\maketitle

\begin{dedication}
This article is dedicated to the memory of Alberto Collino.
\end{dedication}

\begin{abstract}
We give first an easy construction of surfaces with $p_g=q=2, K^2=5$ and Albanese map of degree $3$,
describing a  unirational irreducible connected component of the moduli space of surfaces of general type,  which 
we show to be the only one of the Main Stream  fulfilling the Gorenstein Assumption (see Assumption \ref{GorAss}) with these invariants.    
 We call it the family of CHPP surfaces, since it contains the family constructed by Chen and Hacon \cite{c-h}, 
 and coincides with the one considered by  Penegini and Polizzi \cite{pe-po3}.

 We also give an easy construction of an  irreducible   connected component of the moduli space of surfaces of general type 
with $p_g=q=2, K^2=6$ and Albanese map of degree $4$,  which we call the family of PP4 surfaces
  since it contains	 the family constructed in \cite{pe-po4}.

Finally, we answer a question posed by Hacon and Chen \cite{c-h}, via  three  families of surfaces with $p_g=q$
 whose   Tschirnhaus module has a kernel realization with quotient  a nontrivial  homogeneous bundle.
  Two families have $p_g=q=3$,   the third is a  new family of surfaces with $p_g=q=2, K^2=6$ and Albanese map of degree $3$.
\end{abstract}


\section*{Introduction.}

The main purpose of this article is to introduce a new and simple construction method for minimal surfaces 
of general type $S$ with  $p_g=q=2$.

Recall that surfaces of general type with  $p_g=q$ are  the surfaces with the lowest value $\chi =1$ 
of the invariant $\chi= \chi (\hol_S) = 1 - q + p_g$,
and that the cases 
 $p_g=q= 4$, $p_g=q=3$ have been classified in \cite{bea}, respectively in \cite{c-c-ml}, \cite{h-p}, \cite{pirola}.

In spite of many contributions, \cite{zuc},
 \cite{manetti}, \cite{c-ml},  \cite{c-h}, \cite{pe11}, \cite{pe-po3}, \cite{pe-po2},  \cite{pe-po4}, \cite{c-ml-p}, \cite{pe13},  \cite{pe-po17}, \cite{b-c-f}, \cite{rito}, \cite{c-f},   \cite{pi-po},    \cite{po-ri-ro},  \cite{pe-pi},
the case $p_g=q=2$ is still widely open.

We assume that the  Albanese morphism $\al : S \ra A : = Alb(S)$
is surjective, else $S$ has a fibration onto a curve of genus $g=2$, which makes $S$  isogenous to a product of curves
(see \cite{zuc}, \cite{pe11}).

 Observe that another possibility for $S$ to have an irrational pencil $f : S \ra B$ is that $A= Alb(S)$ contains an
elliptic curve $E$, and $ f $ is induced by the composition of $\al$ with the projection $ A \ra A/E$. 
There are irreducible components of the moduli space where $S$ always contains an irrational pencil,
and actually this happens for many  of the  known examples.
We shall mostly consider components where the Abelian surface $A$, for general $S$, does not contain such
an elliptic curve. More generally, we define:

\begin{defin}
A component $\mathfrak N$ of the Moduli Space of surfaces 
of general type with  $p_g=q=2$ is said to be of the {\bf Main Stream} if 

 (1) the	 Albanese map is surjective and 

(2) $$ \ \  \{ A = Alb(S) | \  [S] \in \mathfrak N\}$$

contains an open set in a Moduli Space of polarized Abelian surfaces.
\end{defin}

In the case we are considering, there are two basic numerical invariants for such a surface $S$:

\begin{itemize}
\item
the degree $d$ of  the Albanese map $\al : S \ra A$,
\item
 the Pfaffian $\de$ of a polarization $D$ determined on $A^{\vee}$ by  $\al$.
\end{itemize}

 In this context, the simplest example of a component of the main stream is provided 
by the	 double covers $S$ of a principally polarized Abelian surface $A$ branched on a divisor $\sB \in | 2 \Theta|$
with simple singularities.

These surfaces have $K^2_S = 4$ and, conversely, each surface with $p_g=q=2$ and $K^2_S = 4$
belongs to this family, see for instance \cite{c-ml-p}.

Moreover,  $K^2_S \geq  4$ by \cite{deb} or  \cite{pardini}, and, by the Bogomolov-Miyaoka-Yau inequality,
$ 4 \leq K^2_S \leq  9$, but the case $K^2_S =9$  is believed  not to occur \footnote { indeed  all the proposed proofs are up to now wrong (including also their amendments).}, while all values $K^2_S= 4, 5,6,7,8$ 
do in fact occur.  

We observe right away that no explicit upper bound is known for the degree $d$, but that  up to now we have
only examples with $ d=2,3,4,6$  (and only one case  for $d=6$, but already three for $d=4$).

Our new method is based on the geometrical understanding of a result due to Chen-Hacon \cite{c-h};
their work singles out the important hypothesis  on $S$ (which is however not necessarily deformation invariant),
namely, that the Albanese surface $A$ does not contain any elliptic curve.  This hypothesis is generically 
verified  if we deal with a component of the Main Stream.

Before we dwell into the description of their result, let us show the main feature of our construction.

Let $A'$ be an Abelian surface with a divisor $D$ yielding a polarization of type $(\de_1,\de_2)$ (hence with Pfaffian
$\de = \de_1\de_2$).

Then,  $V: = H^0(A', \hol_{A'}(D))$ is a $\de$ -dimensional vector space, and we consider as usual  the group of translations
$G: =  \sK(D)$ leaving the isomorphism class of $ \hol_{A'}(D)$ invariant (this is the kernel of $\Phi_{D} : A' \ra {A'}^{\vee}$,
see \cite{mumford}). 
Then, setting $ H_{D} :  = (\ZZ/ \de_1) \times  (\ZZ/ \de_2)$,  $G \cong  H_D^2$ (we shall often use the shorthand notation  $H : = H_D$)
and $V$ is an irreducible  representation of a finite Heisenberg group $\sH_D : = \Heis (H)$, called the Schr\"odinger representation.
This representation has the property that the centre $\sC$ of $\sH_D$ ($\subset \CC^*$) acts by scalar multiplication, 
and we observe moreover that $\sH_D / \sC \cong G,$ $A' /G = : A= (A')^{\vee}$.

Our method consists in  describing a smooth surface 
$$S' \subset \PP^{\de-1} \times A' = \PP (V) \times A' ,$$  which is $G$-invariant for  the $G$-action
of product type 
on $ \PP(V) \times A' $ (the action of $G$ on $\PP(V)$ being  induced  by   the action of the Heisenberg group $\sH_{D}$ on $V$). 

 Then,  we obtain $S $ as the free quotient $S : = S' / G$.
 
  In order to get a full component of the moduli space, we must also consider such  surfaces 
 $X' \subset \PP (V) \times A' $ which have at most rational double points as singularities,
 and let $S'$ be the minimal resolution of $X'$ ($S' = X'$ if $X'$ is smooth).

Since we want to convince the reader that our description is simple, here are the equations of $S'$ in two important cases:

\begin{itemize}
\item
CHPP surfaces: $ p_g=q=2, K^2=5, d=3, \de=2,$
 $$ S'  : =  S' ( \la) : = \{ x_1 (y_1^3 + \la y_1 y_2^2) + x_2  (y_2^3 + \la y_2 y_1^2) =0 \} \subset \PP^1 \times A' ,$$
where $ \la \in \CC$, and $\{x_1, x_2\}$ is a canonical  basis of $V = H^0(A', \hol_{A'}(D))$.
\item
PP4 surfaces: $ p_g=q=2, K^2=6, d=4, \de=3,$
$$ S'  : =  S' ( \mu) : = \{ \rank (M) \leq 1\}  \subset \PP^2 \times A' ,$$
\[
		M=\begin{pmatrix}
			x_1 &  x_3 & x_2 \\
			y_1^2 + \mu   y_2  y_ 3 & y_3^2 + \mu   y_1 y_2 &  y_2^2  + \mu y_1y_3
		\end{pmatrix}
		\]		
where $\{x_1, x_2, x_3\}$ is a canonical basis of $V = H^0(A', \hol_{A'}(D))$ and $\mu \in \CC$  (we shall later  alternatively use  the notation $\{s_1, s_2, s_3\}$ instead of $\{x_1, x_2, x_3\}$, in order to emphasize that these are sections of a line bundle on $A'$).
\end{itemize}

Our description is motivated by a result by Chen and Hacon \cite{c-h}: consider  more generally a surface $S$ with $p_g=q$ and
 a surjective morphism of degree $d$,  $\al : S \ra A$ onto an Abelian surface $A$, such that $\al$ does not factor
 through another Abelian surface.

 One defines the   Tschirnhaus Bundle  $\sE^{\vee}$ via the split exact sequence
$$ 0 \ra \hol_A \ra  \al_* (\hol_S) \ra \sE^{\vee} \ra 0.$$

By relative duality, we have the split exact sequence
$$ 0 \ra \om_A \cong \hol_A \ra \al_* (\om_S)\ra \mathfrak F \ra 0,$$
where $\mathfrak F$ is a subsheaf of $\sE$.

Theorem 3.5 of \cite{c-h}  states that, if $p_g=q=2$ and $A$  does not contain any elliptic curve, then 
we have an exact sequence

$$ 0 \ra \mathfrak H \ra \hat{L} \ra (-1_A)^* (\mathfrak F)\ra 0,$$
where $ \mathfrak H$ is a homogeneous bundle and $ \hat{L}$ is the Fourier-Mukai transform 
 of a    negative definite line bundle $L$  on $A':=A^\vee$ with $\sL: = \hol_{A'}(D) : = L^{-1}$  a polarization with Pfaffian  $\de$.

We have an isogeny $ \Phi_{D}  : A' \ra A' / \sK(D)  = {A'}^{\vee} = A$, and one main result
of the theory  
of Fourier-Mukai transforms says that  $$(-\Phi_{D})^*(\hat{L}) \cong \sL \otimes V^\vee$$ (cf. 
\cite[ formula (3.10)]{mukai},  see also \cite[Prop. 11.9]{polishchuk}).

Hence, if we take the fibre product of $S \ra A$ with  $\Phi_{D}: A' \ra A$, the pull-back $\mathfrak F'$ of $\mathfrak F$
is a quotient of $\sL \otimes V^\vee = \hol_{A'}(D)  \otimes V^\vee$ by a homogenous  bundle
$ \mathfrak H'$.

We have therefore the exact sequence

\begin{equation} \label{MainSequenceCH} 
	\ 0 \ra \mathfrak H' \ra \sL \otimes V^\vee   \ra  \mathfrak F'\ra 0,
\end{equation}

which is Heisenberg-equivariant (and indeed $\sK(D)$-equivariant).

The examples we have described above are just cases where the 
homogenous  bundle 
$ \mathfrak H'$ is zero, equivalently $ \mathfrak H$ is zero. Under this assumption $\mathfrak F$ is a vector bundle (locally free sheaf),
and then $ \mathfrak F \cong \sE$.

On the other hand, on page 227 of \cite{c-h}, it is asked whether the case $ \mathfrak H \neq 0$
can occur.

We give a positive answer, constructing two families of examples where $ \mathfrak H \neq 0$ and $p_g=q=3$,
 and one  example with $p_g=q=2$. 
Here the Albanese 
variety  $Alb(S)$ of $S$ admits a surjection onto an Abelian surface
$A$, and the composition of the Albanese map  $alb_S$ of $S$ with this surjection yields $ \al : S \ra A$
of degree $d=3$, respectively $d=4$, while  $\de$ equals respectively $3, 4$.

Also, the construction  of \cite{pi-po} implicitly provides another example with  $p_g=q=2$, $d=3$ (and we believe with $\de_1=\de_2=2$),
but in their case the construction is quite different and  not directly related to our simple method
since our Gorenstein Assumption (\ref{GorAss}) is not verified.

From our viewpoint, the exact sequence \eqref{MainSequenceCH} is interpreted as showing that $S' \subset \PP(V) \times A'$
is contained in a projective subbundle, as the  equations of $S'$ in the    following important cases clearly show:

\begin{itemize}
\item
$ p_g=q=3, K^2=6, d=3, \de=3,$
 $$ S'  : =  S' ( \la) :=  \{ (y, z) | \sum_j y_j x_j = \sum_j y_j^3 + \la y_1 y_2 y_3 =0  \} \subset \PP^2 \times A' ,$$
where $ y: = ( y_1, y_2,y_3) \in \PP^2,$ $\{x_1, x_2,x_3\} $ is a canonical  basis of $V = H^0(A', \hol_{A'}(D))$ and 
$\la \in \CC$ is such that $f_3(y): =   \sum_j y_j^3 + \la y_1 y_2 y_3 = 0$
defines a smooth  elliptic curve $C$; then, for general $\la$, $S'(\la)$ is smooth. 

Hence, $S' \subset C \times A'$ and $S$ has irregularity $ q=3$ since $G$ acts by translations on $C$.
\item
$ p_g=q=3, K^2=6, d=4, \de=4,$ with a polarization $D$ of type $(1,4)$,
$$ S'  : =  S' ( \la) :=  \{ (y, z) | \sum_j y_j x_j = Q_1(y) = Q_2(y)  =0  \} \subset \PP^3 \times A' ,$$
where $ y \in \PP^3,$ $\{x_1, x_2,x_3 , x_4\}$ is a basis of $V = H^0(A', \hol_{A'}(D))$ and 
$$Q_1(y) : = y_1^2 + y_3^2 + 2 \la y_2 y_4 , \ \ Q_2(y) : = y_2^2 + y_4^2 + 2 \la y_1 y_3, \ \la \neq 0, \pm 1, \pm i.$$
 The intersection of the two quadrics 
defines an elliptic curve $C$ of degree $4$, 
$$ C : =  \{ y ~ | ~  Q_1(y) = Q_2(y)  =0  \} \subset \PP^3 \}$$
on which $G = (\ZZ/4)^2$ acts by translations.\footnote{The case of a polarization of type $(2,2)$ cannot occur
since $G = (\ZZ/2)^4$ cannot act faithfully  on an elliptic curve.}

Again here  $S' \subset C \times A'$ and $S$ has irregularity $ q=3$ since $G$ acts by translations on $C$.

\item

AC3 surfaces: $ p_g=q=2, K^2=6, d=3, \de=3,$
 $$    S' : = \{ (y,z) \in \PP( V) \times A' | \sum_j y_j x_j(z) = 0 , \sum_i y_i^2 y_{i+1} =0\}, ~ S'  \subset C \times A'$$
where $ y: = ( y_1, y_2,y_3) \in \PP^2,$ $\{x_1, x_2,x_3\} $ is a canonical  basis of $V = H^0(A', \hol_{A'}(D))$,
$ C =  \{ y| \sum_i y_i^2 y_{i+1} =0\}$.

Here, $S' \subset C \times A'$ and $S$ has irregularity $ q=2$ since $G$ does not act by translations on $C$.

\end{itemize}

As the reader might have observed, the equations that we have shown in all the five examples
are either a cubic equation in the
variables $(y_j)$, or some quadratic equations. 

This is due to another main ingredient in our approach, namely the use of the theory by Casnati and Ekedahl
of  Gorenstein coverings of small degree $d=3,4,5$ \cite{c-e}, \cite{casnati5}.

The choice  to use this theory forces us to make a slightly restrictive assumption, which we now describe.

We have a surjective morphism $\al : S \ra A$, where $A$ is an Abelian surface, and $S$ is the minimal model of
a surface of general type. $\al$ is generically finite of degree $d \geq 2$, and the canonical divisor
 $K_S $ equals the ramification divisor
$R$ of $\al$. Any rational curve $C$ in $S$ is mapped to a point in $A$,  hence $\al$ factors through 
a morphism $ a: X \ra A$ of  the canonical model
$X$ of $S$, which is a Gorenstein surface.

If $a$ is a finite morphism, then we can directly apply the theory of Casnati and Ekedahl, implying
that $X$ embeds into $\PP(\sE^{\vee})$, and we can then use the structure theorems of \cite{c-e} \cite{casnati5}
for degree $ d \leq 5$.

In general, we can consider the Stein factorization $ S \ra X \ra Y \ra A$, where the last morphism $f : Y \ra A$
is finite of degree $d$, but $Y$ need not be Gorenstein. For this reason, most authors have used until now  more complicated formulae, due to Miranda and Hahn-Miranda, \cite{miranda},\cite{h-m}, describing $Y$ as $ Spec ( \hol_A \oplus \sE^{\vee})$ \footnote{In \cite{miranda} and \cite{h-m} $\sE^\vee$ is called $\sE$.}.

Still, restricting our attention to the open set $$A^0 = A \setminus \{ z ~ | ~  \dim ( a^{-1} (z) ) =1\},$$ 
we have a finite morphism $X^0 \ra A^0$, hence a rational map
$$ \psi : X \dashrightarrow \PP(\sE^{\vee}),$$
  with image $Z$ which is birational to $S$.
The natural question is: when is  $\psi$  indeed a morphism? For instance, is it so  when
$Z$ is normal?

At any rate, we  propose in the present article some  basic assumptions:

\begin{assumption} {\bf (Gorenstein Assumption)} \label{GorAss}
(I) We are given  a surjective morphism of degree $d \geq 3$, $\al : S \ra A$, where $A$ is an Abelian surface,  $S$ is the minimal model of
a surface of general type with $ q= p_g$, and $\al$ enjoys the property of the Albanese map, that it does not factor through a morphism
of $S$ to another Abelian surface. 

(II) We make the assumption that $\al$ induces
an embedding  $ \psi : X \ra \PP(\sE^{\vee})$ of the canonical model $X$ of $S$.
\end{assumption}

\begin{remark}
The Gorenstein assumption holds true if $ a : X \ra A$ is finite, but the example of CHPP surfaces shows that it may hold more generally without $a$ being finite.

\end{remark}

An alternative to the hypothesis of having a component of the Main Stream is the following

\begin{assumption} {\bf (Generality Assumption)} \label{GenAss}
We make here the same assumptions (I), (II) as in \ref{GorAss}, and we require moreover that:

 (III)  there exists a polarization $\sL:=\hol_{A'}(D)$ on $A' = A^{\vee}$ such that the pull back $\sE'$ of $\sE$ to $A' = A^{\vee}$  via $\Phi_D\colon A'\to A$ is a vector bundle fitting into an exact  sequence
\begin{equation} \label{MainSeqGenAss}
  \ 0 \ra \mathfrak H' \ra \sL \otimes V^\vee   \ra  \sE'\ra 0, \tag{$\bullet$}
\end{equation}
which is Heisenberg-equivariant ($\mathfrak{H}', V$ are as above:  $\mathfrak{H}'$ is a homogeneous vector bundle and $V:=H^0(A', \hol_{A'}(D))$,
and indeed the Heisenberg action on  $\sL$, respectively  $ V^\vee$,
makes the above sequence $\sK(D)$-equivariant).

Moreover, we consider $A$ endowed with the dual polarization (see for instance \cite[Sec. 14.4]{b-l} for the notion of dual polarization).

\end{assumption}

\bigskip

\begin{remark}
We shall mostly consider the case  $d \geq 3$, as we want to use the theory by Casnati-Ekedahl.

 For the case $d=2$, even if  $A$ does not contain any elliptic curve,
  the remark made on page 226 of  \cite{c-h}\footnote{observe moreover  that in the remark on page 226  there is an error of sign, 
it should  be $\hol_A(\Theta)$ instead of $\hol_A(- \Theta)$.} is wrong. In fact, the hypothesis $d=2$ does not imply that $\mathfrak{F}$ is a line bundle (yielding a principal polarization) as the existence of the two families with $p_g=q=2$, $K^2=8$, $d=2$ and $p_g=q=2$, $K^2=6$, $d=2$, constructed respectively in \cite{pe11} and \cite{pe-po2}, clearly show.

\end{remark}

\bigskip

\bigskip

These are the main results of the present article.

\bigskip

\begin{thm}

The  CHPP surfaces  yield a  unirational irreducible connected component of the moduli space of surfaces of general type, which is the unique component of the Main Stream   such that there is a surface in this component which fulfills the Gorenstein Assumption \ref{GorAss} and has $K_S^2 = 5$, $p_g(S)= q(S)=2$, and  Albanese map $\al : S \ra A = Alb(S)$ of degree $d=3$.  In particular, this component coincides with the component  constructed in \cite{pe-po3}.

\end{thm}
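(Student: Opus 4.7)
The plan is to split the argument into three parts: first, verify that the displayed cubic in $\PP^1 \times A'$ defines, for general parameters, a smooth minimal surface with the advertised invariants, giving a unirational irreducible subfamily $\mathfrak{M}_\mathrm{CHPP}$ of the moduli space; second, use Chen--Hacon together with Casnati--Ekedahl and Heisenberg equivariance to prove that every Main Stream surface with these invariants and satisfying Assumption \ref{GorAss} actually lies in this family; third, conclude that $\mathfrak{M}_\mathrm{CHPP}$ is a whole connected component, necessarily the one of \cite{pe-po3}.

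For the construction, I would fix a $(1,2)$-polarized abelian surface $(A', D)$ and a canonical Heisenberg basis $\{x_1, x_2\}$ of $V = H^0(A', \hol_{A'}(D))$, so that $G = \sK(D) \cong (\ZZ/2)^2$ acts on $V$ via the Schr\"odinger representation and diagonally on $\PP(V) \times A'$. A direct verification shows that the CHPP polynomial is $G$-invariant, that $S'(\lambda)$ is smooth for generic $(A', D, \lambda)$, and that the free quotient $S(\lambda) := S'(\lambda)/G$ is a smooth minimal surface of general type. Adjunction on $\PP^1 \times A'$ combined with the projection to $A$ yields $p_g = q = 2$, $K^2 = 5$, and Albanese map of degree $3$. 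Since $(A', D)$ varies in the $3$-dimensional moduli of $(1,2)$-polarized abelian surfaces and $\lambda \in \CC$, $\mathfrak{M}_\mathrm{CHPP}$ is irreducible, unirational, and of dimension $4$.

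For uniqueness, suppose $[S]$ lies in a Main Stream component and satisfies Assumption \ref{GorAss} with the given invariants. Replacing $[S]$ by a generic deformation inside its component (on which both the Main Stream hypothesis in its strong form and Assumption \ref{GorAss} persist by openness), Chen--Hacon supplies on $A' = A^\vee$ the Heisenberg-equivariant sequence $0 \to \mathfrak{H}' \to \sL \otimes V^\vee \to \sE' \to 0$ with $\sL = \hol_{A'}(D)$ and $\dim V = \de = \de_1 \de_2$. Since $\sE$ has rank $d - 1 = 2$, we get $\rank(\mathfrak{H}') = \de - 2$. The Casnati--Ekedahl formulae for a Gorenstein triple cover express $K_S^2$ in terms of $c_1(\sE)^2$ and $c_2(\sE)$; combining them with $p_g = q = 2$ and $\chi(\sO_S) = 1$ forces $\de = 2$, so $\mathfrak{H}' = 0$ and $\sE' \cong \sL \otimes V^\vee$. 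The Casnati--Ekedahl structure theorem for degree $3$ Gorenstein covers then embeds the canonical model $X$ as a relative cubic in $\PP(\sE^\vee)$; pulling back to $A'$ realizes $X'$ as a divisor in $\PP(V) \times A'$ which is linear in the basis $\{x_1, x_2\}$ of $V$, cubic in the fibre coordinates $\{y_1, y_2\}$, and $\sK(D)$-equivariant. A decomposition of the corresponding space of sections into Heisenberg isotypes shows that the invariant subspace is exactly the pencil spanned by the two CHPP monomials, yielding a unique normal form with a single parameter $\lambda$. The original $[S]$, being a limit of points of $\mathfrak{M}_\mathrm{CHPP}$ in a family where the CHPP parameterization extends, still belongs to $\mathfrak{M}_\mathrm{CHPP}$.

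Therefore $\mathfrak{M}_\mathrm{CHPP}$ is a unirational irreducible family of dimension $4$ that exhausts every Main Stream point satisfying Assumption \ref{GorAss} with the stated invariants; hence it is both open and closed inside the union of such Main Stream components and is a full irreducible connected component. The identification with the Penegini--Polizzi family of \cite{pe-po3} follows by comparing explicit equations. The principal obstacle I foresee is step two: pinning down $\de = 2$ from the numerical interplay of the Casnati--Ekedahl formula with $K^2 = 5$ (ruling out $\de \ge 3$ with a nonzero homogeneous kernel $\mathfrak{H}'$), and then reducing the Heisenberg-equivariant cubic to the CHPP normal form. The second reduction is purely representation-theoretic, but verifying that no spurious moduli survive after normalizing the basis of $V$ and the overall scalar requires a careful isotype analysis of $V \otimes \mathrm{Sym}^3 V$ under $\sH_D$.
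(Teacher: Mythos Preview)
Your approach is essentially the paper's: Chen--Hacon gives the Heisenberg-equivariant sequence on $A'$, the Gorenstein Assumption identifies $\mathfrak{F}'$ with $\sE'$ (this is Remark \ref{loc-freeness}), a numerical computation forces $\de = 2$ and $\mathfrak{H}' = 0$, and then the Casnati--Ekedahl cubic in $S^3(V^\vee)\otimes\hol_{A'}(D)$ is reduced to CHPP normal form by Heisenberg isotypes. Your anticipated ``principal obstacle'' is in fact painless: the paper derives the closed formula $K_S^2 = \de + 3$ (Proposition \ref{K^2}), so $K_S^2=5$ immediately gives $\de=2$ with no case analysis needed.

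The genuine gap is in your part three. You assert that $\mathfrak{M}_{\mathrm{CHPP}}$ is ``both open and closed'' and hence a full connected component, but closedness under deformation in the large is a substantial separate result, proven in the paper as Theorem \ref{CHPP-large}. Given a one-parameter family $X_t$ whose general member is CHPP, one must show the special fibre is again CHPP. The argument uses that the $G$-action forces the Tschirnhaus bundle of the limit to split as $\hol_{A'}(-D)^{\oplus 2}$ (the two eigensheaves for $g_1$ are exchanged by $g_2$), so $X'_0$ still maps birationally to an extended CHPP divisor $\Sigma_0 \subset \PP^1 \times A'(0)$; then one checks $\Sigma_0$ is normal with at worst rational double points by comparing $p_g$ and $K^2$. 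Your phrase ``the CHPP parameterization extends'' hides exactly this content; without it you have only shown that CHPP surfaces fill an open subset of some component, not the whole component.

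A smaller point: you invoke openness of the Gorenstein Assumption to pass to a generic deformation where Chen--Hacon also applies. This is plausible but not argued. The paper's logic avoids relying on this directly: once \emph{one} surface in $\mathfrak{N}$ is shown to be CHPP, Theorem \ref{CHPP-large} forces $\mathfrak{N}$ to coincide with the CHPP component. Finally, the identification with \cite{pe-po3} in the paper is not by ``comparing explicit equations'' but by invoking Proposition 6.1 of \cite{pe-po3}.
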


\begin{thm}
The   four dimensional family of PP4 surfaces of general type  yields an irreducible connected component of
 the moduli space of surfaces of general type with 
$ p_g=q=2, K^2=6, d=4, \de=3.$ 

\end{thm}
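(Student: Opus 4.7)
The plan is to show that the PP4 family is an irreducible component of the moduli space by verifying three things: (a) the family is irreducible of dimension $4$; (b) it is open in the moduli space; (c) it is closed. Since the parameter space of triples $(A', D, \mu)$ with $D$ a $(1,3)$-polarization on $A'$ and $\mu \in \CC$ is manifestly irreducible of dimension $3 + 1 = 4$, part (a) reduces to checking the asserted invariants together with generic injectivity of the classifying map. For the invariants I would first verify that, for general $(A',D,\mu)$, the scheme $S'(\mu) \subset \PP(V) \times A'$ cut out by the $2 \times 2$ minors of $M$ is smooth of dimension $2$, that $G = \sK(D) \cong (\ZZ/3)^2$ acts freely via the Schr\"odinger representation on $\PP(V)$ and translations on $A'$, compute $\chi(\sO_{S'}) = 9$ and $K_{S'}^2 = 54$ via the Eagon--Northcott resolution of the minors ideal, divide by $|G| = 9$ to obtain $\chi(\sO_S) = 1$ and $K_S^2 = 6$, and check the Albanese degree $d = 4$ by computing the generic fibre of $S' \to A'$ (two generic quadrics in $\PP^2$ meeting in $4$ points, by B\'ezout). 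Generic injectivity of the classifying map follows because $A$ is recovered as the Albanese, $D$ from the Tschirnhaus bundle $\sE$, and $\mu$ from the explicit equations modulo the finite group of Heisenberg symmetries.

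For openness, it suffices to show the family has the expected versal dimension: an independent computation of $h^1(T_S) = 4$ at a general PP4 surface, using the normal sheaf sequence $0 \to T_{S'} \to T_{\PP(V)\times A'}|_{S'} \to N_{S'/\PP(V)\times A'} \to 0$ together with the Eagon--Northcott resolution of $N_{S'/\PP(V)\times A'}$ and passing to $G$-invariants, yields this value and exhibits the family as a Zariski open subset of an irreducible component of the moduli space.

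For closedness, consider a flat deformation $\sS \to T$ with $S_0$ a PP4 surface over a disk. The degree $d = 4$ of the Albanese map and the polarization type $(1,3)$ of the induced polarization on $A'_t = \Alb(S_t)^\vee$ are deformation invariants, so the Chen--Hacon exact sequence \eqref{MainSequenceCH} reads $0 \to \mathfrak H'_t \to \sL_t \otimes V_t^\vee \to \sE'_t \to 0$, a sequence of vector bundles of the same rank $3$ on $A'_t$. Since $\mathfrak H'_0 = 0$ the map is an isomorphism at $t = 0$; the determinant section is then everywhere non-vanishing at $t = 0$, hence also nearby, giving $\mathfrak H'_t = 0$ for $t$ small. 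The Gorenstein Assumption \ref{GorAss} is then verified for $S_t$, so the canonical model $X_t$ embeds in $\PP(\sE_t^\vee)$, and the Casnati--Ekedahl structure theorem for degree-$4$ Gorenstein covers presents $X_t$ as the zero locus of a section of a rank-$2$ quadric bundle $\sN_t$. Heisenberg equivariance of $\sN_t$ then reduces the classifying data to the explicit PP4 matrix shape $M$ with a single residual parameter $\mu_t \in \CC$. The main obstacle is precisely this final equivariance-reduction step: to show that the space of $\sH_{(1,3)}$-equivariant candidate quadric-bundle data is exactly a pencil parametrised by $\mu$, which reduces to an explicit representation-theoretic calculation within the Schr\"odinger representation $V$ of the finite Heisenberg group $\sH_{(1,3)}$, ruling out any other equivariant configurations that might a priori appear.
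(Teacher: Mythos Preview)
Your three-part strategy (irreducible, open, closed) is the right shape, but part (c) has a genuine gap: the argument you wrote is an \emph{openness} argument, not a closedness one. You take $S_0$ to be a PP4 surface and show that nearby $S_t$ are PP4; that is precisely what (b) was supposed to do. Closedness requires the opposite: take $S_t$ PP4 for $t\neq 0$ and show that the limit $S_0$ is still PP4. Your determinant-section argument (``nonvanishing at $t=0$ hence nearby'') goes the wrong way for this, and the Chen--Hacon sequence you invoke is unavailable at a limit where $A_0$ may contain an elliptic curve.

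The paper handles closedness quite differently. Taking a $1$-parameter limit $X'_0$, it first observes that the Tschirnhaus bundle $\sE'_0$ is still of the form $V^{\vee}\otimes\hol_{A'_0}(D_0)$ (an easy limit). The key step, Lemma \ref{F'=F}, is to show that the limit rank-$2$ bundle $\sF'_0\subset S^2(\sE'_0)$ with the correct Chern classes and a Heisenberg-equivariant embedding is still isomorphic to the standard $\sF$; this uses a case analysis on the possible sub-line-bundles $\hol_{A'}(C)\subset\sF'(-D)$ and rules out both the product-polarization case and all nonstandard $\sF'$. Finally, the limit scheme $\Sigma_0\subset\PP^2\times A'_0$ defined by the Hilbert--Burch matrix is shown to have only rational double points by a $p_g$ argument: $\Sigma_0$ is a local complete intersection with $\omega_{\Sigma_0}=\hol(1)$, and if it were not normal (or normal with worse-than-RDP singularities) then $p_g$ of the resolution would drop below $10$, contradicting $\chi(X'_0)=9$, $q(X'_0)=2$.

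For (b), your proposed route via $h^1(T_S)=4$ is a legitimate alternative but you have not carried it out. The paper instead avoids any direct tangent-space computation: it shows (Propositions \ref{F} and \ref{inclusion of F}) that the Heisenberg-invariant deformations of $\sF$ with fixed determinant form a $2$-dimensional family parametrized by $\Pic^0(A')$, and that the equivariant embeddings $\sF\hookrightarrow S^2(\sE')$ force $2M$ trivial and land in the explicit $(\la:\mu)$ pencil. Together with the $3$ moduli of $(A',D)$ this pins down the local deformations exactly. Your representation-theoretic ``final reduction step'' is thus correct in spirit, but the paper's implementation goes through the Koszul factorization $\sF_M\hookrightarrow\bigwedge^2 V^{\vee}\otimes\hol_{A'}(2D+2M)$ rather than a direct enumeration of equivariant quadric data.
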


\begin{thm}
		All  minimal surfaces $S$ of general type with $p_g=q=2$, $K^2=6$, $\delta=3$, with Albanese map of degree $d=3$ and satisfying the Generality Assumption \ref{GenAss} belong to the 
		 family described in  Subsection \ref{new-family}. 
						 
		 Moreover,  under the Generality Assumption, the only other minimal surfaces $S$ of general type with $p_g=q$, $K^2=6$, with $\al :S  \ra A$ a surjective morphism
		 of degree $d=3$ onto an Abelian surface, are the surfaces with $p_g=q=3$ described in Subsection \ref{Hesse-cubic}.
	\end{thm}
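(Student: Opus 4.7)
The plan is to combine the Casnati--Ekedahl structure theorem for triple Gorenstein covers with the Heisenberg-equivariant presentation \eqref{MainSeqGenAss}, and reduce the classification to a representation-theoretic question about $G$-eigenspaces of cubic forms on $\PP(V)$. By the Gorenstein Assumption, the canonical model embeds as $\psi\colon X\hookrightarrow\PP(\sE^\vee)$, and for $d=3$ the Casnati--Ekedahl theorem \cite{c-e} realises $X$ as a Cartier divisor in a suitable relative cubic linear system on $\PP(\sE^\vee)$. Pulling back by $\Phi_D\colon A'\to A$, the fibre product $X':=X\times_A A'$ becomes a relative cubic in $\PP(\sE'^\vee)$. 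Dualising \eqref{MainSeqGenAss} and using that $\sL$ is a line bundle gives an embedding $\PP(\sE'^\vee)\hookrightarrow\PP(V)\times A'$ cut out by the linear-in-$y$ equation $Z_1=\{\sum_{j=1}^{3} y_j x_j(z)=0\}$, where $\{x_1,x_2,x_3\}$ is a canonical Heisenberg basis of $V=H^0(A',\sL)$. Thus $X'=Z_1\cap Z_3\subset\PP(V)\times A'$ with $Z_3=\{f_3=0\}$ a second relative-cubic divisor, and the key technical step is to show that modulo the ideal of $Z_1$ the Heisenberg-equivariant section $f_3$ may be chosen as a pure element of $\mathrm{Sym}^3 V^\vee$, transforming under $G$ by a character prescribed by the equivariance in \eqref{MainSeqGenAss}.

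Next, I would decompose $\mathrm{Sym}^3 V^\vee$ under $G=\sK(D)\cong(\ZZ/3)^2$ acting through the Schr\"odinger representation. This ten-dimensional space splits as one two-dimensional $G$-invariant subspace---the Hesse pencil $\langle y_1^3+y_2^3+y_3^3,\,y_1y_2y_3\rangle$---together with eight one-dimensional eigenspaces indexed by the non-trivial characters of $G$. The symplectic modular group $\Sp(H_D)\cong\mathrm{SL}(2,\ZZ/3)$ acts transitively on these non-trivial characters, so after a Heisenberg basis change we may assume $f_3$ lies either in the trivial-character subspace or in the fixed eigenspace spanned by $y_1^2y_2+y_2^2y_3+y_3^2y_1$. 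In the trivial case, for generic $\lambda$ the curve $C=\{f_3=0\}$ is a smooth elliptic curve on which $G$ acts by translations (the three coordinate vertices are not on $C$), so $S'\subset C\times A'$ and $S=S'/G$ has $q(S)=3$: this is the Hesse-cubic family of Subsection~\ref{Hesse-cubic}. In the non-trivial character case, $G$ still acts on $C=\{f_3=0\}$ but not by translations, since the three coordinate vertices do lie on $C$ and are fixed by the diagonal generator; hence $C/G\cong\PP^1$ and $q(S)=2$. This is the AC3 family of Subsection~\ref{new-family}. A bidegree and adjunction computation on $\PP(V)\times A'$ then confirms $K_S^2=6$, $\delta=3$, $d=3$, together with minimality and general type of $S$ for generic data.

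The main obstacle is the $z$-independence reduction announced at the end of the first paragraph: one must track the pullback to $A'$ of the Casnati--Ekedahl twisting line bundle, identify its image in the filtration of $\mathrm{Sym}^3\sE'$ induced by \eqref{MainSeqGenAss}, and verify that every Heisenberg-equivariant section defining $X'$ differs from a pure $\mathrm{Sym}^3 V^\vee$-section by a section in the ideal of $Z_1$; equivalently, that no cohomologically non-zero Heisenberg-equivariant contribution to $f_3$ comes from the homogeneous piece $\mathfrak{H}'$ in the filtration. A secondary but necessary point is the transitivity of $\mathrm{SL}(2,\ZZ/3)$ on the non-trivial characters of $(\ZZ/3)^2$, which is a short orbit--stabiliser computation once the action on characters is correctly identified. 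Once these two reductions are in place the classification follows formally, and the uniqueness of the $\Sp(H_D)$-orbit on non-trivial cubic characters is exactly what forces all $q=2$ candidates into the single irreducible AC3 family.
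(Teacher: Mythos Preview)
Your overall strategy—Casnati--Ekedahl structure theorem plus decomposition of $\mathrm{Sym}^3 V^\vee$ into $G$-eigenspaces—matches the paper's, and your use of the transitive $\mathrm{SL}(2,\ZZ/3)$-action on nontrivial characters is actually cleaner than the paper's more ad hoc reduction via the extended-Heisenberg involution. However, there is a genuine gap.

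You have implicitly assumed that $\mathfrak{H}'=\hol_{A'}$. The Generality Assumption only says that $\mathfrak{H}'$ is a rank-one homogeneous bundle, i.e.\ $\mathfrak{H}'=\hol_{A'}(M)$ for some $M\in\mathrm{Pic}^0(A')$, possibly nontrivial. Your linear equation $Z_1=\{\sum_j y_j x_j(z)=0\}$ with $x_j$ a canonical basis of $H^0(A',\sL)$ is correct only when $M$ is trivial (then Schur's lemma forces the inclusion to be the identity element of $V\otimes V^\vee$). If $M$ is nontrivial, the inclusion $\hol_{A'}(M)\hookrightarrow\sL\otimes V^\vee$ is governed by a section of $\sL(-M)\otimes V^\vee$, and your description of $Z_1$ no longer applies. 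The paper deals with this case separately (Subsection \ref{nontriv-hom-bdl}): from the twisted Eagon--Northcott sequence
\[
0\ \longrightarrow\ \mathrm{Sym}^2(V^\vee)\otimes\hol_{A'}(-D+2M)\ \longrightarrow\ \mathrm{Sym}^3(V^\vee)\otimes\hol_{A'}(M)\ \longrightarrow\ \mathrm{Sym}^3(\sE')\otimes\det(\sE')^{-1}\ \longrightarrow\ 0
\]
one reads off $H^0(\mathrm{Sym}^3(\sE')\otimes\det(\sE')^{-1})=0$, since $H^0(\hol_{A'}(M))=0$ for $M$ nontrivial and $H^1(\hol_{A'}(-D+2M))=0$ by Kodaira vanishing. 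Hence no triple cover exists in that case, and one is forced back to $\mathfrak{H}'=\hol_{A'}$.

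Ironically, the obstacle you flag—the ``$z$-independence reduction''—is not the hard step. In the trivial-$\mathfrak{H}'$ case the same Eagon--Northcott sequence (now with $M=0$) shows that $H^0(\mathrm{Sym}^3(\sE')\otimes\det(\sE')^{-1})\cong\mathrm{Sym}^3(V^\vee)$ on the nose, so every defining section lifts uniquely to a cubic form in the $y_j$ alone; there is nothing to check. What you should be flagging instead is precisely the exclusion of nontrivial $M$, which uses the same exact sequence but with the vanishing $H^0(\hol_{A'}(M))=0$ doing the work.
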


We have a similar example with $p_g=q=3$, $K^2=6$, with $\al :S  \ra A$ a surjective morphism
		 of degree $d=4$ onto an Abelian surface, see the final Section \ref{d=de=4} (here there is a computer script still missing).

We end by summarizing the situation concerning the known irreducible components of the Main Stream for surfaces
of general type with $p_g=q=2$:

\begin{itemize}
\item $ K^2=4$: there is a unique irreducible connected component, of  STANDARD surfaces,   with $d=2, \de=1$, and branch curve in $|2 \Theta|$;
\item
$ K^2=5, d=3$: there is  a  unique irreducible connected component  fulfilling the Gorenstein Assumption \ref{GorAss},  the component of CHPP surfaces;

\item 
$ K^2=6, d=2$: there are only three  irreducible connected components, see \cite{pe-po2}\footnote{ their construction works,
in spite of the incorrect assertion that the elliptic singularity maps to a base point of the linear system $|D'|$, where
$\mathfrak F= \mathfrak M_p(D')$: indeed  $p$ is a base point of $|D' + Q_i|$,
where $Q_i$ is  a 2-torsion line bundle.};
\item
$ K^2=6, d=3$: there is the new component of AC3  surfaces (see \cite{ca-se} and
 Subsection \ref{new-family}); 
\item
$ K^2=6, d=4$: there is  the irreducible connected component of PP4 surfaces, which  contains the irreducible one constructed in \cite{pe-po4};
\item
$ K^2=7, d=3$: there is the component of PP7 surfaces, \cite{pi-po}.

\item 
	$K^2=8, d=2$:
there is an irreducible  connected component of dimension $3$  constructed by Penegini in \cite{pe11} as follows.  Let $f : D \ra C$ be an \'etale double cover of a curve $C$ of genus $2$,
and let $g : C \ra C$ be the covering involution. Then $S = (D \times D)/ \ZZ/4$
where the action is free and generated by  $(x,y) \mapsto (y, g(x))$. 

The Albanese surface is the Jacobian $Jac (C)$ and the Albanese map factors through $ [(x,y)] \mapsto f(x) + f(y) \in C^{(2)}$,
and then via the birational morphism $C^{(2)} \ra Jac(C)$. In this way the branch locus
$\sB$ of the Albanese map is a divisor $\sB \in | 4 \Theta|$ with a point $O$ of multiplicity 6, and the sheaf $\mathfrak F = \mathfrak M_{O}^2 (2 \Theta)$ (part of the facts we state here can  also be found in \cite{po-ri-ro}).

\end{itemize}

Finally, here are the other known irreducible components of the moduli space of surfaces of general type with $p_g=q=2$, which are not of the Main Stream: 

\begin{itemize}
	\item  $ K^2=4$: none, there is only the component of STANDARD surfaces;
	\item  $ K^2=5$: none;
	\item  $ K^2=6$: none;
	\item  $ K^2=7, d=2$: 
	there are 3 irreducible  components, all of dimension $2$, see \cite{pe-pi}. For every surface in them, the Albanese surface has a  non-simple polarization of type $(1,2)$  and the branch curve $\sB \in |2D|$
	has a singularity of type $(3,3)$;
	
	\item  $ K^2=8, d=2$:
	there are two complex-conjugate rigid minimal surfaces whose universal cover is not biholomorphic to the bidisk $\HH \times \HH$,
	 \cite{po-ri-ro}.
	 \item $ K^2=8, d=4,6,4$: there  are here $3$ connected components with $ K^2=8$, two of them of dimension $3$ and one of dimension $4$, constructed by Penegini in \cite{pe11};  these are surfaces isogenous to a product of unmixed type
		and not   of the Main Stream. In \cite{pe13} the author points out that for these families $d\leq 6$ is an upper bound for the degree $d$ of the Albanese map. 		
		But indeed, as we calculated by hand,  confirming  a personal communication by Penegini, the respective degrees are (using the order of Table 1 of 
		\cite{pe11}) $ d=4,6,4$.
		Moreover, the respective monodromy groups
		of the Albanese covering are $$(\ZZ/2)^2, \mathfrak S_3 \times 
		\mathfrak S_3, D_4.$$ 
\end{itemize}

\begin{remark}
		  The surfaces with $p_g=q=2$ constructed in \cite{b-c-f},
		as stated  in Prop. 4.11 ibidem, lie in the components described in \cite{pe-po2}.
		
	\end{remark}

	\begin{question}
		  Does the case $K_S^2 = 5, d=2$ occur?
	\end{question}

\section{Some basic notation}

1) Given a finite Abelian group $H$, the finite  Heisenberg group $ \Heis(H)$ is the central extension 
$$ 1 \ra \mu_n \ra \Heis(H) \ra H \times H^* \ra 1,$$
where $\mu_n \subset \CC^* $ is the group of $n$-th roots of $1$, $n$ is the exponent of $H$,
$ H^* : = \Hom (H , \CC^*)$, and $\Heis(H)$ is the group generated by the respective actions
of $h \in H$ on $ \CC^H$ given by translation, $(h\cdot f) (x) = f (x + h)$, and of $ \chi \in H^*$
given by multiplication with the character, $ (\chi\cdot f)(x) = f(x) \chi(x)$.

This representation of $\Heis(H)$ on $ \CC^H$ is called the {\bf Schr\"odinger representation}.

\bigskip

2) Given a locally free sheaf $\sE$ on a variety, we follow the topologists' notation and we let 
$$\PP(\sE) : = Proj (Sym(\sE^{\vee})) = : Proj (\sE^{\vee}),$$
where in general $Sym(\sE)$ denotes the symmetric algebra 
$$Sym(\sE): = \bigoplus _m Sym^m(\sE).$$
 Moreover, we shall often use the shorthand notation $S^m(\cdot):=Sym^m(\cdot)$.
\bigskip

3) $A'$ shall be an Abelian surface endowed with a divisor class $D$ yielding a polarization with elementary divisors $\de_1,\de_2$.

One has a surjective morphism (\cite{mumford}) 
\[
\begin{split}
\Phi_D : &A' \ra A'^{\vee} : = Pic^0(A') \\
&x \longmapsto t_x^* (D) - D,
\end{split}
\]
 and its kernel $G: =  \sK(D) \cong H^2$, where $ H := (\ZZ/ \de_1)\oplus (\ZZ/ \de_2)$, and the Heisenberg
group $\sH_D : = \Heis(H)$ is a group of isomorphisms of the line bundle $\sL:=\hol_{A'}(D)$ mapping onto $G$.

 $V: = H^0(A', \hol_{A'}(D))$ is isomorphic to the Schr\"odinger representation of $\sH_D$.

\bigskip

4) If $\de_1=1$, so that $H$ is a cyclic group $H = \ZZ/\de$, we shall also denote $\Heis(H) = \sH_D = : \sH_{\de}$.

\bigskip

5) Assume that $\al  : S \ra A$ is a generically finite morphism of degree $d$, where $A$ is smooth (in our case $A$ shall be an Abelian surface). Then we have a Stein factorization $ \al = f \circ \pi $, with $\pi : S \ra Y$ and $ f : Y \ra A$,
where $Y$ is normal and $f$ is finite (of degree $d$).

Clearly $\al_* (\hol_S) = f_*(\hol_Y)$, and since $f$ is finite (quasi-finite and proper) and $A$ is smooth,
then $\al_* (\hol_S) = f_*(\hol_Y)$ is locally free. We have an exact sequence
\begin{equation*}
	0 \ra \hol_{A} \ra \al_\ast(\hol_S) \to \sE^\vee \ra 0
\end{equation*}
 which splits in characteristic zero, i.e.
 $$\al_* (\hol_S) = \hol_A \oplus \sE^{\vee} ,$$
 where $\sE^{\vee}$ is called the Tschirnhaus bundle of $\al$ (and of $f$). Note that some people call it Tschirnhausen sheaf.
 
 By duality for a finite morphism (see \cite{hartshorne}, exercises 6.10, page 239, and 7.2, page 249) 
 $$ \sH om (f_*(\hol_Y), \om_A) = f_* \omega_Y = \omega_A \oplus (\sE \otimes \omega_A)$$
 where $\omega_Y$ is the dualizing sheaf of $Y$.
 
  In dimension $2$,  $\omega_Y$ equals the sheaf
 of Zariski's differentials; 
 since $S$ is a resolution of singularities of $Y$, we obtain then that $\al_* (\omega_S) \subset  f_* \omega_Y$,
 hence
 $$ \al_*   (\omega_S) =   \omega_A \oplus \mathfrak F, \qquad \mathfrak F \subset  \sE \otimes \omega_A.$$
 This formula, if $A$ is an Abelian surface,  simplifies to 
 $$ \al_*   (\omega_S) =   \omega_A \oplus \mathfrak F, \qquad  \mathfrak F \subset  \sE.$$
 If $Y$ has Rational Double Points as singularities, then $\al_* (\omega_S) =  f_* \omega_Y$
 and we have equality $\mathfrak F = \sE$.
 
 Moreover, $\sE/ \mathfrak F$ is supported on a finite set (contained in the image of  the singular
 points of $Y$, and of the points where the fibre of $S \ra Y$ is positive dimensional), hence if $\mathfrak F$ is locally free,
 then $ \mathfrak F = \sE$.

 \begin{remark} \label{loc-freeness}
 	We observe here that, assuming $A$ to be an Abelian surface, under the Gorenstein Assumption \ref{GorAss}  it holds true that $\mathfrak{F}=\sE$. 
 	The argument is as follows.

 	Since $\psi\colon X\hookrightarrow \PP:=\PP(\sE^\vee)$ is a closed subscheme of the $\PP^{d-2}$-bundle $p \colon \PP\to A$ and $\PP$ is smooth, Theorem 13.5 of \cite{lipman} ensures that the dualizing sheaf $\om_X$ is 
 	$$ \om_X=\sE xt^{d-2}_{\hol_\PP}(\psi_\ast \hol_X, \om_\PP).$$
 	
 	Then, recalling that $a=p\circ \psi$, we apply to the previous equality the direct image $p_\ast$, getting on the left-hand side   
 	$$a_\ast (\om_X)=\al_\ast(\om_S)=\om_A\oplus \mathfrak{F}=\hol_A\oplus \mathfrak{F},$$
 	while on the right-hand side
 	$$ p_\ast \bigg(\sE xt^{d-2}_{\hol_\PP}(\psi_\ast \hol_X, \om_\PP)\bigg)=\sE xt^{d-2}_{p}(\psi_\ast \hol_X, \om_{\PP}),$$
 	where $\sE xt^{d-2}_{p}$ stands for the $(d-2)$-th derived functor of $p_\ast \circ \sH om_{\hol_\PP}$ and the last equality follows from
 	the \emph{Grothendieck spectral sequence} with starting page 
 	$$ E_2^{p,q}:=R^p p_\ast(R^q \sH om_{\hol_\PP})$$
 	taking into account the vanishing 
 	$$ \sE xt^{q}_{\hol_\PP}(\psi_\ast \hol_X, \om_\PP)=0  \qquad \makebox{for}\quad q\neq d-2,$$
 	see \cite[pp. 200--201]{gelfand-manin}.

 	Recalling that $\om_\PP=\om_{\PP|A}$ because $A$ is an Abelian surface,  since $(d-2)$-th order duality holds for $p\colon \PP \to A$ (see \cite[Definition 10, Example 12]{kleiman}), there is the following isomorphism
 
 	\begin{equation*}
 		\begin{split}
 			&\sE xt^{d-2}_{p}(\psi_\ast \hol_X, \om_{\PP|A})\iso \sH om_{\hol_A}(p_\ast \psi_\ast(\hol_X), \hol_A)= (a_\ast(\hol_X))^\vee\\
 			&=(\al_\ast(\hol_S))^\vee=(\hol_A\oplus \sE^\vee)^\vee=\hol_{A}\oplus\sE.
 		\end{split}
 	\end{equation*}
	Finally, we have obtained
	$$ \hol_{A}\oplus \mathfrak{F}=\hol_{A}\oplus\sE,$$
	which clearly yields our assertion, i.e., $\mathfrak{F}=\sE$.

 \end{remark}

 \begin{remark}
 Arnaud Beauville has shown to the second author that by Grothendieck duality 
 (Theorem 11.1 of \cite{hartshorneduality}) one has the exact sequence
 
 $$ 0 \ra \al_* \omega_S \ra \sH om ( \al_* \hol_S, \hol_A) \ra \sE xt^2 (\sR^1\al_*  \hol_S,  \hol_A) \ra 0,$$
 hence that the cokernel 
 $$ \sE / \mathfrak{F} \cong \sE xt^2 (\sR^1\al_*  \hol_S,  \hol_A),$$
 hence $ \sE / \mathfrak{F} = 0$ if and only if $\sR^1\al_*  \hol_S =0 $.
  \end{remark}

 \bigskip

 6) Given an ample divisor $D$ on an Abelian surface $A'$, the linear system $|D|$ has no base points
 by the theorem of Lefschetz if $\de_1 \geq 2$.

 6 i) If $\de_1=1$ and  $\de \geq 3$ we first show 
 that it has no base points 
 if it has no fixed part; since the base-point locus $\Sigma$ is $G$-invariant, hence it has cardinality a multiple
 of $|G| = \de^2$, while $D^2 = 2 \de$. 
 
 Note that the system $|D|$ has no fixed part (see \cite[Lemma 10.1.1]{b-l})  unless the pair $(A',\hol_{A'} (D))$
 is isomorphic to a polarized product  of two elliptic curves, namely
 \begin{equation} \label{pol-prod-(1,d)} 
 	(A',\hol_{A'} (D))  \cong (E_1 ,\hol_{E_1} (D_1 )) \times (E_2 ,\hol_{E_2} (D_2 )), \tag{$\ast$}
 \end{equation}
 where $ \deg (D_1)=1$, $\deg(D_2)= \de_2$.
 
 Hence, we conclude in particular that for $\de_1=1$, $\de \geq 3$, $|D|$ has no base points
if  $A'$ does not contain any elliptic curve.

 6 ii) If $\de_1=1$ and  $\de =  2$, $D$ has no fixed part unless (see \cite{barth}) $A'$ is the polarized product of two elliptic
curves,
\begin{equation} \label{pol-prod-(1,2)}
	 (A', \hol_{A'}(D)) = (E_1, \hol_{E_1} (P_1)) \times (E_2, \hol_{E_2} (2 P_2)), \tag{$\ast \ast$}
\end{equation}
where $P_1, P_2$ are points;  in this  case the base locus 
equals the curve $\{P_1\} \times E_2$. 

If there is no curve in the base locus,
by $G$-invariance, the base locus consists of 4 distinct points.

 Hence, in all cases, given a basis $x_1, x_2$ of $H^0 (A',\hol_{A'} (D))$, at each base point
either  $x_1$ or $x_2$ is a local parameter.

\section{The basic construction for CHPP surfaces, having $p_g=q=2, K^2=5$ and Albanese map of degree $3$}

In this section  $A'$ is an Abelian surface with a divisor $D$ yielding a polarization of type $(1,2)$.

Then,  $V: = H^0(A', \hol_{A'}(D))$ is a two-dimensional vector space, and there is a group of translations
$G: =  \sK(D) \cong (\ZZ/2)^2$ leaving the isomorphism class of $  \hol_{A'}(D)$ invariant ($G$ is the kernel of $\Phi_D : A' \ra A:= (A')^{\vee}$,
see \cite{mumford}).

There are two generators $g_1, g_2$  acting on  $V: = H^0(A', \hol_{A'}(D))$ by transforming a suitable basis $x_1, x_2$ 
as follows:
$$ g_1(x_1) = x_1, \ g_1(x_2)= - x_2, \ g_2 (x_1) = x_2, \ g_2(x_2) = x_1.$$

The action of $g_1, g_2$ has the property that $ \ga: = g_1 g_2 g_1 g_2 $ acts by multiplication by $ - 1$,
hence $V$  is  the Schr\"odinger representation of the order $8$ Heisenberg 
group $\sH : = \sH_2$ with centre  $\ZZ/2$, such that $\sH \cong D_4$ and $\sH / \langle \ga\rangle= G$.

Let us call $W:=V^\vee$ the dual representation of $V$, which actually turns out to be isomorphic to $V$. Namely, $y_1,y_2$ being the dual basis of $x_1,x_2$, 
$$ g_1(y_1 ) =  y_1 , \ g_1( y_2) = - y_2 , \ g_2 (y_1) = y_2, \ g_2(y_2) = y_1,$$
and $W, V$ are the same  representation of the Heisenberg group $\sH$.

\medskip

The basic observation is that on  the tensor product $ V \otimes W$ we have an action of $G$, since 
the centre of $\sH$, generated by $\ga$, acts trivially. And $ V \otimes W$ contains   (up to constants) precisely 
one  invariant element,
namely $ x_1 y_1 + x_2 y_2$.

\bigskip

We define now an action of $G$ on $\PP^1 \times A'$, of product type, where $G$ acts on $ \PP^1 = \PP(V)$
via the previous action of $\sH$ on  $V$, whereas $G$ acts on $A'$ by translations.

Let $H$ be the hyperplane divisor on $\PP^1$. 

Then we consider the family of  divisors $X'$  in  $\PP^1 \times A'$ which belong to  the linear system 
$ | 3 H \boxtimes D|: = |p_1^*(3H) + p_2^*(D)|$  and which are  left invariant by the action of $G$.\footnote{In the sequel we shall be more sloppy and write $ 3 H + D : = 3 H \boxtimes D$.}

The general equation of such divisors in $ | 3 H \boxtimes D|$ is of the form 
$$ X' : = \{x_1 P(y_1, y_2) + x_2 Q(y_1, y_2)\},$$
with $P,Q$ homogeneous polynomials of degree $3$. 

$g_2$-invariance is equivalent to $Q(y_1, y_2)= \e P(y_2, y_1) , \e = \pm 1$: here the choice of $\e$
amounts to requiring the equation $f : = x_1 P(y_1, y_2) + x_2 Q(y_1, y_2)$  to be an $\e$-eigenvector for the action of $g_2$.

$g_1$-invariance is equivalent to 
$$x_1 P(y_1, y_2) + x_2  \e P(y_2, y_1) = 
\e' [x_1 P(y_1, - y_2) - x_2 \e P(-y_2, y_1)], \ \ \e' = \pm 1$$ (the choice of $\e'$
amounts to requiring the equation to be an $\e'$-eigenvector for the action of $g_1$).

We can write 
$$  P(y_1, - y_2) =  \e' P(y_1, y_2), \ \ \e' = \pm 1,$$
that is, either 
$$ P(y_1, - y_2) =  P(y_1, y_2), \quad  {\rm or } \quad  P(y_1, - y_2) =  - P(y_1, y_2) . $$
In the first case $P $ is a linear combination of $y_1^3 , y_1 y_2^2$, in the second case
a linear combination of $y_2^3 , y_1^2  y_2$.

\begin{remark}\label{exchange}
(i) One may  observe in an elementary way that the choice of $\e = -1$ reduces to the case $\e=1$ by replacing the basis element $x_2$ with $-x_2$.
\smallskip

(ii) The equation $ f \in H^0( \hol_{\PP^1 \times A'}( 3 H + D)) = Sym^3 (W) \otimes V$. Since $X' : = \{ f=0\}$ is $G$-invariant, 
follows that $f$ is an eigenvector for the $G$-action, with eigenvalue a character $\chi \in G^*$.

We can then take as new equation $ (f \otimes \chi ) \in Sym^3 (W) \otimes (V \otimes \chi) \cong Sym^3 (W) \otimes V,$
where the last isomorphism follows since $\sH$ has a unique irreducible representation of dimension $2$,
and $4$ of dimension $1$, corresponding to $G^* = \sH^* : = Hom (\sH , \CC^*)$.

Hence, by a suitable change of basis in $V$ we may always assume that not only $X'$ is $G$-invariant, but also
its equation $f$ is $G$-invariant.

\end{remark}

\begin{defin}
We define our extended CHPP surfaces $X$ as the quotient $X : = X' /G$
of a surface 
$$ X' : =  X' ( \la) : = \{ x_1 (y_1^3 + \la y_1 y_2^2) + x_2  (y_2^3 + \la y_2 y_1^2) =0 \} \subset \PP^1 \times A' = : Z,$$
where $ \la \in \CC.$ 

Whereas a CHPP surface shall be  the minimal resolution of singularities of one which has only rational double points as singularities. 
 By abuse of notation, we shall often call CHPP surfaces the canonical models, that is, the quotients $X:=X'/G$.

\end{defin}

\bigskip

Observe that, for $\la=0$, $X'$ is a Galois covering of $A'$ with group $(\ZZ / 3)$.

\bigskip

\begin{proposition}\label{SingCHPP}
 An extended CHPP  surface $X'$ is reducible if we are in the exceptional case \eqref{pol-prod-(1,2)} of 6 ii), that is, if   
$(A',D)$ is a polarized product  of elliptic curves.

Otherwise, an  extended CHPP  surface  is always normal, and  smooth for general $\la$ and  general $(A',D)$.

 $G$ acts freely on $X'$, and the CHPP surfaces have ample 
canonical divisor,
and invariants 
$$ K_{X'}^2 = 20, \ K_{X}^2 = 5, \ q(X') = q(X) = 2, \ p_g(X') = 5, \ p_g(X) = 2,$$
$$ \pi_1(X') \cong \pi_1(X) = \ZZ^4.$$
Their Albanese map has degree $3$.

The branch locus $\Delta$ of the Albanese map of $X'$ consists of 4 curves in the linear system $|D|$,
which are generally distinct (hence $\De$ has a 4-uple point at the points $ x_1=x_2=0$); for $\la=0$ instead
$\De$ consists of the two curves $ \{x_1=0\}, \ \{x_2=0\}$ counted with multiplicity 2.

\end{proposition}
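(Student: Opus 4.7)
The proof proceeds as a sequence of geometric and cohomological checks, which I would carry out in the following order.

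\emph{Reducibility and normality.} The defining equation
$f = x_1(y_1^3 + \la y_1 y_2^2) + x_2(y_2^3 + \la y_2 y_1^2)$
vanishes identically on $\PP^1 \times F$ whenever $F$ is a fixed component of $|D|$; by the dichotomy in 6.ii, this happens precisely in the polarized product case $(\ast\ast)$, and so $X'$ contains $\PP^1\times F$ as a component. Outside this case, $|D|$ has only a $0$-dimensional base locus (four points), and at each base point one of $x_1,x_2$ is a local parameter on $A'$, so locally $X'$ is cut out by a single non-trivial equation. A Jacobian computation combined with Bertini applied to the $G$-invariant sub-linear system of $|3H+D|$ on $Z:=\PP^1\times A'$ shows that the singular locus of $X'$ is at most $0$-dimensional; hence $X'$ is normal in general, and smooth for generic $(A',D,\la)$.

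\emph{Free action, adjunction and $K^2$.} The generators $g_1,g_2\in G$ act by non-trivial translations on $A'$ and therefore without fixed points there; the product action on $Z$, and hence its restriction to $X'$, is free. Adjunction with $K_Z=-2H$ gives $K_{X'}=(H+D)|_{X'}$, the restriction of an ample class, so $K_{X'}$ (and $K_X$) is ample. Intersection theory on the threefold $Z$, using $H^2=D^3=H^2\cdot D=0$ and $H\cdot D^2=4$, yields
\[
K_{X'}^2=(H+D)^2(3H+D)=5\,H\cdot D^2=20,\qquad K_X^2=20/|G|=5.
\]

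\emph{Holomorphic invariants, Albanese, branch locus.} The sequence $0\to\hol_Z(-X')\to\hol_Z\to\hol_{X'}\to 0$, together with Künneth applied to $\hol_Z(-3H-D)$ (whose cohomology vanishes in degrees $\le 2$ and equals $\CC^4$ in degree $3$), produces in the long exact sequence $q(X')=2$ and $p_g(X')=5$. Since the $G$-action is free, $\chi(\hol_X)=1$; and since $G$ acts by translations on $A'$, it acts trivially on $H^0(\Omega^1_{A'})$, so $q(X)=2$, forcing $p_g(X)=2$. The Albanese variety of $X'$ is $A'$ and the map $X'\to A'$ has degree $3$, since for generic $a\in A'$ the fibre of $p_2|_{X'}$ is a plane cubic in $\PP^1$; the same computation applies to $X\to A$. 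The branch divisor of $p_2|_{X'}$ is cut out by the $(y_1,y_2)$-discriminant of this cubic, a quartic in $(x_1,x_2)$; a direct calculation shows it factors as a product of four linear forms, hence yields four curves in $|D|$, all passing through the four base points of $|D|$ (producing the $4$-uple point there). For $\la=0$ the discriminant collapses to $\mathrm{const}\cdot x_1^2 x_2^2$, giving the two curves $\{x_1=0\},\{x_2=0\}$ each with multiplicity~$2$.

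\emph{Main obstacle: fundamental group.} The assertion $\pi_1(X')\iso\pi_1(X)=\ZZ^4$ is the most delicate step. Since $X'\to A'$ is ramified of degree $3$, $\pi_1(X')$ is not automatically a finite-index subgroup of $\pi_1(A')=\ZZ^4$; one needs a genuine van~Kampen/monodromy argument using the four branch curves in $|D|$ to show that the ramification data forces $\pi_1(X')=\ZZ^4$. Then the extension $1\to\pi_1(X')\to\pi_1(X)\to G\to 1$ arising from the free quotient must be shown to split, e.g.\ by exhibiting explicit lifts of $G$ to commuting translations inside $\mathrm{Alb}(X)$. This last part is where I expect the most bookkeeping; the normality argument in the non-exceptional case is the second subtle point, as it requires systematically excluding curves of singularities by invoking the absence of fixed components of $|D|$.
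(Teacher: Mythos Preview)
Your treatment of reducibility, adjunction, $K^2$, the holomorphic invariants, and the branch locus is fine and essentially matches the paper (you use the structure sequence for $\hol_{X'}$ rather than for $\omega_{X'}$, but Serre duality makes the two computations equivalent).

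There are two places where your plan diverges from what is actually needed.

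\medskip

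\textbf{Fundamental group.} You flag this as the main obstacle and propose a van Kampen/monodromy analysis of the branched triple cover $X'\to A'$, followed by a splitting argument for the extension $1\to\pi_1(X')\to\pi_1(X)\to G\to1$. This is far harder than necessary. The key observation you are missing is that $X'$ is an \emph{ample divisor} in the smooth projective threefold $Z=\PP^1\times A'$; the Lefschetz hyperplane theorem therefore gives directly
\[
\pi_1(X')\;\cong\;\pi_1(Z)\;\cong\;\pi_1(A')\;=\;\ZZ^4.
\]
No monodromy computation is needed. For $\pi_1(X)$, one does not try to split the extension: instead, the Albanese map $X\to A=A'/G$ induces a surjection $\pi_1(X)\twoheadrightarrow\pi_1(A)=\ZZ^4$, and the composite $\pi_1(X')\hookrightarrow\pi_1(X)\twoheadrightarrow\pi_1(A)$ is the index-$4$ inclusion $\pi_1(A')\hookrightarrow\pi_1(A)$. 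Since $[\pi_1(X):\pi_1(X')]=|G|=4$ as well, the surjection $\pi_1(X)\to\pi_1(A)$ is forced to be an isomorphism.

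\medskip

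\textbf{Normality for every $\la$.} Bertini on the $G$-invariant subsystem of $|3H+D|$ gives smoothness for \emph{generic} $\la$, but the proposition asserts that $X'$ is normal for \emph{all} $\la$ (outside the polarized-product case). For this one must show that $\mathrm{Sing}(X')$ is always $0$-dimensional. The paper does this by hand: a vertical fibre $\PP^1\times\{z\}$ lies in $X'$ only when $x_1(z)=x_2(z)=0$, and at those four points $x_1,x_2$ are local parameters on $A'$, so the fibre is not contained in $\mathrm{Sing}(X')$; away from these fibres the Jacobian conditions cut out a finite set explicitly (via the discriminant equation in $z=y_2/y_1$ and the finitely many singular members of the pencil $|D|$). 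Your sketch gestures at a Jacobian computation but leans on Bertini for the conclusion; you should separate the two claims (generic smoothness versus normality for all $\la$) and supply the finiteness argument for the latter.
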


\begin{proof}

i) If we are in the exceptional case \eqref{pol-prod-(1,2)}, where $x_1 = e_1 s_1,$ $x_2 = 	e_1 s_2$, $e_1$ is the pull-back of a section defining $P_1$ on $E_1$, while 
$s_1, s_2$ are pull-backs of a basis	of $H^0( \hol_{E_2}(2 P_2))$,
then over the curve $E_2'  : = \{P_1\} \times E_2 $ we have 
 $x_1= x_2=0$, hence $( \PP^1 \times E_2' ) \subset X'$, and $X'$ is reducible.

From now on we assume that we are not in case \eqref{pol-prod-(1,2)}, hence the equations $x_1=x_2=0$ define 4 points and
$x_1,x_2$ are local parameters for $A'$.

\medskip

ii) For $\la =0$, we get that the derivatives with respect to $y_1, y_2$ vanish only when $x_1 y_1= x_2 y_2=0$,
which implies that $x_1x_2=0$.

 Since  $(A',D)$ is not the exception \eqref{pol-prod-(1,2)}  of 6 ii),   for  $x_1=x_2=0$  the divisors $x_1 =0,x_2=0$ are smooth and they intersect transversally in $4$ points; hence
$x_1,x_2$ are local coordinates, and  the partial derivatives with respect to $x_1,x_2$ vanish only on $y_1= y_2=0$: but these equations define the empty set in $\PP^1$.

 If one only of $x_1,x_2$ vanishes, say $x_1=0$, then $y_2=0$ and we have a smooth point if the divisor $x_1=0$
is smooth: this happens for general $(A',D)$.

\bigskip

iii) Identify $H, D$ with their pull back on  $Z= \PP^1 \times A' $. 

Since $K_Z= - 2 H $, adjunction gives $K_{X'} =  (H + D) |_{X'}$, 
and $$ K_{X'}^2  = (3 H + D) (H + D)^2 = 5 H D^2 = 20.$$
$G$ acts freely on $A'$, hence also on $X'$, therefore $K_{X}^2 = 5$.

We have the exact cohomology sequence associated to the exact sequence
$$  0 \ra \hol_Z( - 2 H ) \ra \hol_Z(  H + D ) \ra \hol_{X'} (K_{X'}) \ra 0,$$
and since 
$$H^0  (\hol_Z( - 2 H ) ) = 0, \ h^1  (\hol_Z( - 2 H ) ) = 1, \ h^2  (\hol_Z( - 2 H ) ) = 2,$$
$$H^1(  \hol_Z(  H + D ) ) =0,  \ H^2(  \hol_Z(  H + D ) ) =0, \ h^0(\hol_Z(  H + D ) ) = 4,$$
follows that $$p_g(X') = 5,\ \  q(X') = h^1 ( \hol_{X'} (K_{X'})) = 2.$$

Since $G$ acts trivially on $H^0 (\Omega^1_{A'}) \cong H^0 (\Omega^1_{X'})$, follows that $q(X)=2$.
Finally, $G$ acts trivially on $ H^1  (\hol_Z( - 2 H ) )$, while, as remarked at the beginning,
$H^0(  \hol_Z(  H + D ) ) = V \otimes W$, hence $H^0(  \hol_Z(  H + D ) )^G$ has dimension $1$ and thus $p_g(X) = 2$.

The isomorphism $ \pi_1(X') \cong \pi_1(A')$ follows from the theorem of Lefschetz since
$X'$ is an ample divisor on $Z = \PP^1 \times A'$.

Finally,  $ \pi_1(X') \subset  \pi_1(X)$ is a normal subgroup of  index $4$, with quotient group $G$.

Set as usual  $A : = A' /G$. Then, $A$ is the Albanese variety of $X$, hence $\pi_1(A)$ is a quotient
of $\pi_1(X)$. But $ \pi_1(X') \cong \pi_1(A' )\subset \pi_1(A)$ has index $4$, 
hence $ \pi_1(X) \cong \pi_1(A )$.

\medskip

iv) In general, we ask when $X'$ has  Rational Double Points as singularities, for $\la \neq 0$.
 
To calculate the singular points we may use the Remark \ref{exchange}, and restrict to the equation
$ f = x_1 (y_1^3 + \la y_1 y_2^2) + x_2  (y_2^3 + \la y_2 y_1^2) $.

The partials with respect to $y_1$, respectively $y_2$, yield:
$$ \frac{\partial{f}}{\partial{y_1}} = x_1  ( 3 y_1^2 + \la y_2^2) + x_2 ( 2 \la y_1 y_2)=0,\frac{\partial{f}}{\partial{y_2}} =
x_1   ( 2 \la y_1 y_2) + x_2 ( 3 y_2^2 + \la y_1^2) =0.$$

If $x_1 $ vanishes, but $x_2$ does not, we have a singular point only if $y_1 y_2=0 = ( 3 y_2^2 + \la y_1^2) $,
but the two polynomials do not vanish simultaneously, hence we have no singular point.  Similarly if $x_2 $ vanishes, but $x_1$ does not.

\medskip

If both $x_1,x_2$ vanish,  the two partials with respect to the (local parameters) $x_1, x_2$
vanish if and only if 
$$(y_1^3 + \la y_1 y_2^2) =   (y_2^3 + \la y_2 y_1^2) =0 \ \iff \   (y_1^2 + \la  y_2^2) =   (y_2^2 + \la  y_1^2) =0   .$$

This may only occur for $\la = \pm 1$, and we get then exactly two singular points.

\medskip

If both $x_1,x_2$ do not vanish, then a necessary condition for a singular point (or a ramification point for $\al'$)
is that 
$$( 3 y_1^2 + \la y_2^2) ( 3 y_2^2 + \la y_1^2)  -  ( 2 \la y_1 y_2)^2=0 \ \iff \ y_1^4 + y_2^4 + \frac{1}{\la} (3 - \la^2) y_1^2 y_2^2 =0 .$$
This equation does not vanish for $y_1=0$, hence we write $y_1=1, y_2=z$, and we get the equation
\begin{equation} \label{eq-sing-CHPP}
	1 + z^4 + \frac{1}{\la} (3 - \la^2)  z^2 =0, \tag{$\ast \ast \ast$}
\end{equation}
 whose roots come in opposite pairs $z, -z$.

\medskip

At a singular point of $X'$ we have:
$$ f : = x_1 f_1(\la, z)  + x_2  f_2(\la, z) =0, \ \nabla x_1 ( f_1(\la, z)) + \nabla x_2(  f_2(\la, z)) = 0,$$
whence we get as second coordinate a singular point of the pencil  $|D|$, corresponding to
the point $( f_1(\la, z),  f_2(\la, z)) \in \PP^1$.

Now,  since we are not in the exceptional case \eqref{pol-prod-(1,2)}, by the Zeuthen-Segre formula follows that the pencil $|D|$ gives rise
to  at most such $ 12$ 
singular  points,
since the Euler number of the blow up of $A'$ equals $4$, and then $ 4 =  - 2 D^2 + \mu = -8 + \mu$,
hence we have $\mu=12$ singular fibres counted with multiplicity.

For each such value of  $(u_1, u_2)$ corresponding to a singular fibre
we get the equation $ u_2 f_1(\la, z)  -  u_1 f_2(\la, z) = 0 $, and substituting the four values
of $z$ gotten by \eqref{eq-sing-CHPP}, we get equations for the parameter $\la$ for which $X'$ is singular.

\bigskip

v)   We want to show that always   $X'$ has only finitely many singularities, hence $X'$ is always normal.

In fact, a fibre of $ \PP^1 \times A' \ra A'$ is contained in $X'$ if and only if $x_1=x_2=0$. But   $x_1, x_2$ 
are  local parameters, hence the whole fibre cannot be contained in the singular locus.

The above proof shows that, in the other cases where $x_1 \neq 0$ or $x_2 \neq 0$, 
we have always a finite number of 	singular points on $X'$.

\bigskip

vi) Finally, the discriminant of the projection of $X'$ to $A'$ equals 

\begin{equation}
\Delta : =  \det \left(\begin{matrix}3x_1 & 2\la x_2 & \la x_1 &0
\cr 0 & 3 x_1 &  2 \la x_2 & \la x_1 
\cr  \la x_2  & 2\la x_1 & 3 x_2  &0
\cr 0  & \la x_2 & 2 \la x_1 & 3 x_2
\end{matrix}\right).
\end{equation}

Since $\Delta$ is given by the vanishing of a homogeneus polynomial of degree $4$ in $(x_1,x_2)$ we get, for each $\la$,  a product of 4 linear factors, hence the discriminant consists of 4 curves 
in the linear system $|D|$, counted with multiplicity.

For $\la=0$, we get $ 81 x_1^2 x_2^2=0$, which is of course expected since then we have a Galois covering with cyclic
Galois group of order 3.

\end{proof}

\begin{remark}
The morphism $\al : X \ra A$ never yields a Galois extension of function fields.
\end{remark}

The reason is: if $\al $ is Galois, then also the fibre product $X' \ra A$ is Galois, hence $X' \ra A'$
is Galois and  the equation of $X'$ is
$$X' = \{ x_1 y_1^3 + x_2 y_2^3=0\}.$$
The group $\mu_3$ of third roots of unity acts by 
$$ y_1 \mapsto y_1, \ \ y_2 \mapsto \e y_2, \ \ \e^3=1.$$

We claim that $G, \mu_3$ generate a group $G'$ of order $12$.

 Indeed, we see right away that $g_1$ and $\e$ commute,
while $$g_2 \e g_2 (y_1, y_2) = (\e y_1, y_2) = (y_1, \e^{-1} y_2) \ \iff \ g_2 \e g_2  =  \e^{-1}.$$
Hence $g_2$ and $\mu_3$ generate $\mathfrak S_3$, and 
$$G' = \mathfrak S_3 \times \ZZ/2, \quad  \ZZ/2 = \{ 0, g_2\}.$$

Since $X$ corresponds to the intermediate subgroup $ G < G'$ which is not normal ($G \cong \ZZ/2 \times  \ZZ/2 $), $\al$ is not Galois,
a contradiction.

\section{Deformations of the CHPP surfaces $X$}

We have constructed an irreducible  $4$-dimensional family (three parameters for the Abelian  surface $A'$,
and $\la$ as fourth parameter) of CHPP surfaces $X$, and we want to see that this yields
a  component of the moduli space of surfaces of general type.

In order to achieve this goal, it suffices to analyze deformations $\sX \ra T$ 
with connected base.

There are two guiding principles, coming from topology:

I) every deformation of $X$ comes together with a deformation of $X'$ preserving the $G$-action
(up to an automorphism of $G$),

II) every deformation of $X$, respectively of $X'$, comes together with a deformation of their Albanese maps $\al ' : X' \ra A'$,
$\al  : X \ra A$ which are generically finite of degree $3$; indeed any other surface homotopically equivalent
to $X$, resp. $X'$, has an Albanese map of degree $3$.

Taking the Stein factorization of the Albanese maps, we get  finite triple coverings $Y (t)  \ra A (t) $,
$Y(t ): = Spec (\al (t)_* (\hol_{X_t}))$, and similarly for the deformations of $X'$.

We observe that for our  surfaces $X'$ we have 
the exact sequence 
$$ 0 \ra \hol_Z(-3H - D) \ra \hol_Z \ra \hol_{X'} \ra 0 ,$$
whence by direct image the exact sequence
$$ 0 \ra \hol_{A'} \ra \al'_* (\hol_{X'}) \ra \hol_{A'}(-D)^{\oplus 2} \ra 0,$$
and the so-called Tschirnhaus bundle $(\sE')^{\vee}$ of the degree 3 map equals  $ (\sE')^{\vee} = \hol_{A'}(-D)^{\oplus 2}$.

Moreover, for small deformations, we shall have a composite morphism 
$$  X'_t \ra Y'(t) \ra \PP ( \al' (t)_* (\hol_{X'_t})/ \hol_{A'(t)}),$$
which is a $\PP^1$-bundle over $A'(t)$.

The deformations of $X'$ turn out to be more complicated to describe than the ones of $X$,
since the $\PP^1$-bundle can admit nontrivial deformations, as $X'$ deforms.

But the situation for $X$ is simpler.

\begin{lemma}
For every deformation $X_t$ of $X$, the Albanese map of $X'_t$ factors through a birational morphism
into $\PP^1 \times  A'(t)$. 

\end{lemma}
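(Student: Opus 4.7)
The plan is to show that the (pulled back) Tschirnhaus bundle $(\sE'_t)^\vee$ remains of the form $\sL_t^{-1} \otimes V$, so that $\PP((\sE'_t)^\vee) \iso \PP(V) \times A'(t) = \PP^1 \times A'(t)$, and then to invoke the Gorenstein-type embedding to produce the required birational morphism.

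First, I would lift the given deformation $\sX \to T$ of $X$ to a $G$-equivariant simultaneous deformation $\sX' \to T$ of $X'$: the \'etale $G$-cover $X' \to X$ is classified by a topological surjection $\pi_1(X) \onto G$, which is rigid under deformation, so $A'(t)$ is an \'etale $G$-cover of $A(t) := \Alb(X_t)$ and $\al'_t \colon X'_t \to A'(t)$ is a $G$-equivariant morphism of degree $3$ (the degree being a topological invariant). The induced polarization $D_t$ on $A'(t)$ is of type $(1,2)$ with $\sK(D_t) = G$, and carries the canonical Heisenberg action of $\sH_{D_t}$ on $\sL_t := \hol_{A'(t)}(D_t)$.

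Next, I would apply the Chen--Hacon analysis fibrewise: the pulled-back Tschirnhaus sequence takes the Heisenberg-equivariant form
\[ 0 \longrightarrow \mathfrak{H}'_t \longrightarrow \sL_t \otimes V^\vee \longrightarrow \sE'_t \longrightarrow 0, \]
where $\mathfrak{H}'_t$ is a homogeneous (hence locally free) vector bundle on $A'(t)$, valid so long as $A(t)$ contains no elliptic curve, an open condition preserved on the Main Stream component. The crucial step, and the heart of the proof, is a rank count: $V$ has dimension $\delta = 2$ and $\sE'_t$ has rank $d-1 = 2$, so $\mathfrak{H}'_t$ has rank $0$; being a homogeneous vector bundle, it must therefore vanish. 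Hence $\sE'_t \iso \sL_t \otimes V^\vee$ for every such $t$, and consequently $\PP((\sE'_t)^\vee) \iso \PP(V) \times A'(t) = \PP^1 \times A'(t)$.

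Finally, the Gorenstein Assumption \ref{GorAss} is open in the deformation: the canonical model retains its rational double points and its Casnati--Ekedahl embedding into $\PP((\sE'_t)^\vee)$ persists, so the canonical model of $X'_t$ embeds in $\PP^1 \times A'(t)$, factoring the Albanese map $\al'_t$ through a birational morphism composed with the projection onto $A'(t)$. The main obstacle is dealing with loci in $T$ where $A(t)$ could acquire an elliptic curve or where the Gorenstein assumption could fail; this is to be handled by exploiting the openness of the Main Stream condition and, where necessary, passing to the Stein factorization $X'_t \to Y'(t) \to A'(t)$ so that the $d=3$ Casnati--Ekedahl structure theorem continues to supply the embedding into the trivialised $\PP^1$-bundle.
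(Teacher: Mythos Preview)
Your approach differs substantially from the paper's, and the difference matters. You invoke the Chen--Hacon exact sequence and then run a rank count ($\de=2$, $\rank\sE'_t=2$, hence $\mathfrak H'_t=0$). The paper instead uses only the $G$-equivariance of the Tschirnhaus bundle: since $g_1$ has order $2$, the rank-$2$ bundle $(\sE'_t)^\vee$ splits into two $g_1$-eigensheaves, and since $g_2$ exchanges them (the Heisenberg relation $g_1g_2=-g_2g_1$ on $V$), they are isomorphic line bundles. A line bundle deforming $\hol_{A'}(-D)$ stays in the same numerical class, so up to translation it is again $\hol_{A'(t)}(-D_t)$.

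Your route has a genuine gap. Theorem~3.5 of Chen--Hacon requires that $A(t)$ contain no elliptic curve; you acknowledge this and appeal to ``openness on the Main Stream'', but the lemma is stated for \emph{every} deformation $X_t$, and it is immediately used (via the Corollary and then Theorem~\ref{CHPP-large}) for one-parameter limits, precisely where $A(t)$ may acquire an elliptic curve. The paper's $G$-eigensheaf argument is designed to avoid this: the $G$-action on $X'_t$ is a topological datum and persists unconditionally, so the splitting of $(\sE'_t)^\vee$ into two isomorphic line bundles holds for \emph{all} $t$, with no hypothesis on $A(t)$. A secondary issue: the Chen--Hacon sequence is for $\mathfrak F'$, not $\sE'$, and you silently identify them; this is harmless here (once $\mathfrak H'_t=0$ the quotient is locally free, forcing $\mathfrak F'_t=\sE'_t$), but it should be said. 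Finally, you do not need to invoke the Gorenstein Assumption for the birational morphism: once $\PP((\sE'_t)^\vee)\cong\PP^1\times A'(t)$, the Stein factorization $X'_t\to Y'(t)\hookrightarrow\PP((\sE'_t)^\vee)$ already supplies it.
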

\begin{proof}
Any deformation of $X$ yields, as we already observed, 
a deformation of $X'$ which preserves the  $G$-action.

This implies  that the Tschirnhaus bundle $(\sE')^{\vee}$ splits according to the two eigensheaves for $g_1$,
and since they shall be exchanged by $g_2$, we have that $(\sE')^{\vee}$  is  always a direct sum of two copies of the same line bundle.
Which, of course, is a deformation of  $ \hol_{A'} (-D)$, hence it is this bundle up to translation on $A'(t)$.

\end{proof}

\begin{corollary}
Any small deformation of $X$ yields an embedding $ X'_t \subset \PP^1 \times  A'(t)$. The divisor class of
$X'_t$ is the class $ 3 H \boxtimes D_t$, where $D_t$ is a polarization of type $(1,2)$ on $A'(t)$.
\end{corollary}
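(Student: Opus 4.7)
The plan is to upgrade the birational morphism of the preceding lemma to a closed embedding via the Casnati--Ekedahl structure theorem for Gorenstein triple covers. The lemma already produces the Tschirnhaus bundle $(\sE')^\vee_t = \hol_{A'(t)}(-D_t)^{\oplus 2}$ of $\al'_t\colon X'_t\to A'(t)$, which, after twisting by $\hol_{A'(t)}(D_t)$, identifies $\PP((\sE')^\vee_t)$ with $\PP^1\times A'(t)$. What remains is to check that $\al'_t$ is in fact finite (not merely generically finite) on the canonical model, so that Casnati--Ekedahl applies directly, and then to pin down the divisor class of the image.

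Concretely I will proceed in four steps. First, I verify that $\al'_t$ is finite: for $t=0$ this is established in the proof of Proposition \ref{SingCHPP}, since $X'_0\subset\PP^1\times A'$ contains no fiber of the projection $\PP^1\times A'\to A'$; by the openness, for a proper morphism, of the locus where the map is finite, the same holds for all $t$ in a small neighborhood of $0$. Second, since small deformations of canonical models remain canonical, $X'_t$ has only rational double points and is therefore Gorenstein. The $d=3$ case of the Casnati--Ekedahl theorem \cite{c-e} then yields a closed embedding
\[
X'_t\hookrightarrow\PP((\sE')^\vee_t)\iso\PP^1\times A'(t)
\]
as a relative Cartier divisor of degree $3$, whose class has the form $3H+\pi^\ast M_t$ for some $M_t\in\Pic(A'(t))$. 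Third, to identify $M_t$, I push the Koszul sequence
\[
0\to\hol(-3H-\pi^\ast M_t)\to\hol\to\hol_{X'_t}\to 0
\]
forward along $\pi\colon\PP^1\times A'(t)\to A'(t)$ and match with $\pi_\ast\hol_{X'_t}=\hol_{A'(t)}\oplus(\sE')^\vee_t$; a direct computation of $R^\bullet\pi_\ast$ on the trivial $\PP^1$-bundle forces $M_t=D_t$. Fourth, the numerical type of a polarization on an Abelian surface is locally constant in families, so $D_t$ remains of type $(1,2)$ because $D_0=D$ is.

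I expect the main obstacle to be Step 1, namely verifying that the Gorenstein Assumption \ref{GorAss} is open in this deformation (equivalently, that $\al'_t$ stays finite on the canonical model for all $t$ near $0$, so no fiber of $\PP^1\times A'(t)\to A'(t)$ gets absorbed into the image). Once this openness is secured, Steps 2--4 are a routine application of the Casnati--Ekedahl structure theorem together with a Koszul-type computation of higher direct images on a trivial $\PP^1$-bundle and the deformation invariance of the polarization type.
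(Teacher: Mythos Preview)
Your Step 1 contains a factual error that undermines the whole argument. You claim that $\al'_0 : X'_0 \to A'$ is finite because ``$X'_0 \subset \PP^1 \times A'$ contains no fiber of the projection $\PP^1 \times A' \to A'$,'' citing Proposition~\ref{SingCHPP}. But the proof of that proposition says the opposite: in part (v) it is observed that \emph{a fiber of $\PP^1 \times A' \to A'$ is contained in $X'$ if and only if $x_1 = x_2 = 0$}. Since the linear system $|D|$ on $A'$ has exactly four base points (we are not in the exceptional product case), the surface $X'_0$ contains four whole $\PP^1$-fibers, and $\al'_0$ is not finite. The paper makes this point explicitly in the Remark following Assumption~\ref{GorAss}: the CHPP surfaces are precisely an example where the Gorenstein Assumption holds \emph{without} $a$ being finite. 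Consequently, Casnati--Ekedahl does not apply directly in Step 2, and your finiteness-based openness argument does not get off the ground.

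The paper's (implicit) route is different and does not pass through finiteness of $\al'_t$. The preceding Lemma already furnishes, for every $t$, a morphism $\psi_t : X'_t \to \PP((\sE')^\vee_t) \cong \PP^1 \times A'(t)$. For $t = 0$ this morphism is nothing but the defining inclusion $X' \subset \PP^1 \times A'$, hence a closed embedding. Since being a closed immersion is an open condition on the base for a proper family of morphisms, $\psi_t$ remains a closed embedding for $t$ near $0$. The divisor class is then determined by continuity: the image is a flat deformation of $X'_0 \in |3H \boxtimes D|$, so $X'_t \in |3H \boxtimes D_t|$, and the numerical type $(1,2)$ of the polarization is deformation invariant. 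Your Steps 3 and 4 are fine as confirmations of this last point, but they are only reachable once the embedding is established by the correct argument.
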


The previous results allow us  to conclude that all small deformations of $X$ are given by deformations of $X'$ as hypersurfaces
inside  a threefold $\PP^1 \times  A'(t)$, where $A'(t)$ is a deformation of $A'$, and the action of $G$ is preserved;
hence every deformation of $X$ comes from a $G$-invariant deformation of $X'$, and we conclude that our families are locally complete.

\medskip

We want to show more:

\begin{thm}\label{CHPP-large}
Every deformation in the large of a CHPP surface is a CHPP surface.
\end{thm}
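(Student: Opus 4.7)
The plan is to upgrade the local completeness just established to a closedness statement, which combined with irreducibility of the CHPP family implies that it exhausts a full connected component of the moduli space. Equivalently, given a flat family $f\colon \sX\to T$ of minimal surfaces of general type with $p_g=q=2$, $K^2=5$ over a connected smooth curve, with $\sX_t$ a CHPP surface for $t\neq 0$, one must show that the special fibre $\sX_0$ is also CHPP.

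First, extend the CHPP structure over $T\setminus\{0\}$ to a relative one. Since $p_g, q, K^2$ are deformation-invariant and the degree of the Albanese map is a topological invariant, the relative Albanese morphism $\al\colon \sX\to\sA$ is generically finite of degree $3$ and the polarization on $\sA$ has type $(1,2)$ throughout. This yields a relative dual isogeny $\Phi_\sD\colon \sA'\to\sA$ of constant degree $4$, and a relative étale $G$-cover $\sX'\to\sX$ with $G=(\ZZ/2)^2$. By the Lemma just proved, over the punctured base the $g_1$-eigensheaf decomposition of the relative Tschirnhaus bundle gives a splitting into two copies of a translate of $\hol_{\sA'/T}(-\sD)$, and hence a relative embedding $\sX'\hookrightarrow\PP^1\times_T\sA'$ of divisors of class $3H+\sD$, equivariant under a product-type $G$-action.

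Second, show that this relative structure extends across $t=0$. By flatness of $\al_\ast\hol_\sX$ and continuity of the $G$-action, the eigensheaf decomposition persists on the central fibre, so the two summands are again a translate of $\hol_{A_0'}(-D_0)$. The $G$-invariant defining equation of $\sX_0'$ in $\PP^1\times A_0'$ is then a section of $\hol(3H + D_0)$ invariant under $G$, and the enumeration carried out in the basic construction forces it to be of CHPP form $x_1 P(y_1,y_2) + x_2 Q(y_1,y_2)$, perhaps after an exchange of basis of the type permitted by Remark~\ref{exchange}. Passing to the $G$-quotient, $\sX_0$ is a CHPP surface.

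The principal obstacle is the second step: one must guarantee that the Gorenstein Assumption~\ref{GorAss} is preserved at $t=0$, i.e.\ that $\sX_0$ still embeds as a canonical model into the relative projective bundle $\PP(\sE^\vee)$ rather than only maps there rationally, and that the relative Tschirnhaus bundle does not jump in a way that breaks the $G$-equivariant splitting into two copies of the same line bundle. One must also exclude that $(A_0',D_0)$ degenerates into the polarized product of elliptic curves of case~\eqref{pol-prod-(1,2)}, since by Proposition~\ref{SingCHPP} the fibre $\sX_0'$ would then be reducible; this is automatic, however, because $\sX_0$ is by assumption still an irreducible minimal surface of general type with the prescribed invariants.
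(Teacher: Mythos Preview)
Your outline correctly isolates the right setup and even names the genuine difficulty in the final paragraph, but you do not resolve it. Saying that ``by flatness and continuity of the $G$-action the eigensheaf decomposition persists'' only gives you the Tschirnhaus bundle at $t=0$ in the expected form; it does not show that the canonical model $X'_0$ \emph{embeds} in $\PP^1\times A'_0$ rather than merely mapping there birationally. You then state this as the principal obstacle and leave it open.

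The paper's proof is exactly the missing argument. One does not try to prove the embedding directly; instead one takes the \emph{image} $\Sigma_0$ of the birational morphism $X'_0 \to \PP^1\times A'_0$ (which is all the Lemma gives) and argues numerically that $\Sigma_0$ must have only rational double points. Concretely: $\Sigma_0$ is an extended CHPP surface, and since $X'_0$ is irreducible so is $\Sigma_0$, hence by Proposition~\ref{SingCHPP} we are not in the exceptional case~\eqref{pol-prod-(1,2)} and $\Sigma_0$ is normal. Its dualizing sheaf is $\hol(H+D_0)|_{\Sigma_0}$ with $h^0=5=p_g(X'_0)$, so there are no adjunction conditions. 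If the singularities of $\Sigma_0$ were worse than RDPs, then $K_{X'_0}$ would equal the pullback of $H+D_0$ minus a nonzero effective exceptional divisor, forcing $K_{X'_0}^2<20$, contradicting the deformation invariance of $K^2$. Hence $\Sigma_0$ has only RDPs and is the canonical model, which is precisely the Gorenstein Assumption you needed.

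So your proposal is a correct plan but is incomplete as a proof: the substantive content of the theorem is the $K^2$--comparison argument, and that is absent.
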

\begin{proof}
As well known (see for instance \cite{bc}, pages 625-626), it suffices to show that if we have a 1-parameter family
$X_t, t \in T$, where $T$ is a smooth curve,  which is a deformation in the large of a CHPP surface $X$,
then all the surfaces  $X_t$ are CHPP surfaces.

Under the above assumption  $X'_t$ is a deformation of $X'$, and we have
a birational morphism $X'_t \ra \PP^1 \times A'(t)$, whose image is a divisor $\Sigma_t$ in a linear system
$ |3 H \boxtimes D_t|$, where $D_t$ is a polarization of type $(1,2)$ on $A'(t)$.

The dualizing sheaf $\omega_{\Sigma_t}$ is the restriction of the invertible sheaf $\hol_{Z(t)}( H + D_t)$,
and it has $h^0( \omega_{\Sigma_t}) = 5 = p_g (X'_t )$.

Let $S'_t$ be the minimal model of $X'_t$.

Since $S'_t \ra \Sigma_t$ is a resolution of singularities, we see now that there are no conditions of subadjunction,
nor of adjunction.

 $\Sigma_t$ yields an extended CHPP surface and, since  $\Sigma_t$ is irreducible,  by Proposition \ref{SingCHPP} 
  we are not in the exceptional case \eqref{pol-prod-(1,2)} and  $\Sigma_t$ is normal.

If $\Sigma_t$ is normal and does not have rational double points as singularities,
then $ K_{X'_t}$ is the pull back of $(H + D_t)$ minus a non zero effective exceptional divisor,
hence   $ K_{X'_t}^2 < 20$, a contradiction.

\end{proof}

In the next subsection, which can be seen as a longer digression and is not essential 
for the results of  this article,  we consider the more difficult question of studying the deformations of 
the surfaces $X'$. 

\subsection{The deformations of the surface  $X'$}

This subsection is sort of a digression, and we want here to  look  at the deformations of $X'$:
hence we look at the cohomology group $H^1 (X' , \Theta_{X'})$
and the Kodaira-Spencer map.

The first isomorphism that we observe is 
$$\Theta_Z \cong  \hol_Z(2H) \oplus  \hol_Z^2 .$$

Then, we consider the exact sequence 
$$ 0 \ra  \Theta_{X'} \ra \hol_{X'} (2H) \oplus  \hol_{X'}^2 \ra \hol_{X'} (3 H + D) \ra 0,$$
and, since $X'$ is of general type,  $H^0 ( \Theta_{X'} )=0$, 
while  $H^0 (\hol_{X'} (2H) \oplus  \hol_{X'}^2 )$ has dimension $5$, and
$ H^0 (\hol_{X'} (3 H + D) ) $ has dimension $9=8-1+2$,
since it fits into the exact sequence 
$$ 0 \ra H^0 (\hol_Z) \ra H^0 (\hol_Z (3 H + D) ) \ra  H^0 (\hol_{X'} (3 H + D) ) \ra H^1 (\hol_Z) \ra 0 .$$
Finally,  since $\hol_{X'} (3 H + D)= \hol_{X'} (2 H + K_{X'})$
has vanishing second   cohomology group, and first of dimension 1,
we have  the exact cohomology sequence
$$ 0 \ra H^0 (\hol_{X'} (2H) \oplus  \hol_{X'}^2 ) \ra H^0 (\hol_{X'} (3 H + D) ) \ra H^1( \Theta_{X'})  \ra H^1 ( \hol_{X'} (2H) \oplus  \hol_{X'}^2 ) \ra$$
$$ \ra H^1 (\hol_{X'} (3 H + D) ) \ra H^2( \Theta_{X'})  \ra H^2 ( \hol_{X'} (2H) \oplus  \hol_{X'}^2 ) \ra 0,$$ 
and  since,  by the next lemma,  $H^i ( \hol_{X'} (2H))$ has dimension 6 for $i=1$, 3 for $i=2$, we get that $ H^1( \Theta_{X'}) $ has dimension
at most $14$, while $ H^2( \Theta_{X'}) $ has dimension either $13$ or $14$.

\medskip

Since however $ 10 \chi(X') - 2 K^2_{X'} = 0$, $H^1( \Theta_{X'}), H^2( \Theta_{X'})$ have the same dimension.

\begin{lemma}
$H^1 ( \hol_{X'} (2H)) $ has dimension 6, $H^2 ( \hol_{X'} (2H)) $ has dimension 3.
\end{lemma}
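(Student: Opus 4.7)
The plan is to realize $\hol_{X'}(2H)$ as the restriction of a line bundle on the ambient threefold $Z = \PP^1 \times A'$ and to compute both sides of the resulting restriction sequence via K\"unneth.

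Since $X'$ is a divisor of class $3H + D$ in $Z$, tensoring the ideal sequence
$$0 \to \hol_Z(-3H - D) \to \hol_Z \to \hol_{X'} \to 0$$
with $\hol_Z(2H)$ yields
$$0 \to \hol_Z(-H - D) \to \hol_Z(2H) \to \hol_{X'}(2H) \to 0.$$
The first step is to compute $H^i(Z,\hol_Z(-H-D))$. Because $\hol_Z(-H-D) = p_1^{*}\hol_{\PP^1}(-1)\otimes p_2^{*}\hol_{A'}(-D)$, K\"unneth gives this cohomology as a tensor product involving $H^{*}(\PP^1,\hol_{\PP^1}(-1))$, which vanishes in every degree (directly for $H^0$, and by Serre duality on $\PP^1$ for $H^1$). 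Hence $H^i(Z, \hol_Z(-H - D)) = 0$ for all $i$.

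The second step is to compute $H^i(Z,\hol_Z(2H))$. Again by K\"unneth, since $\hol_Z(2H) = p_1^{*}\hol_{\PP^1}(2)\otimes p_2^{*}\hol_{A'}$, and $H^0(\PP^1,\hol(2)) = 3$, $H^1(\PP^1,\hol(2)) = 0$, $h^0(\hol_{A'}) = 1$, $h^1(\hol_{A'}) = 2$, $h^2(\hol_{A'}) = 1$, we obtain
$$h^0(Z,\hol_Z(2H)) = 3,\qquad h^1(Z,\hol_Z(2H)) = 6,\qquad h^2(Z,\hol_Z(2H)) = 3.$$

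The third step is to feed these into the long exact cohomology sequence associated to the restriction sequence. Since the cohomology of $\hol_Z(-H-D)$ vanishes in all degrees, the restriction map induces an isomorphism $H^i(Z,\hol_Z(2H)) \iso H^i(X',\hol_{X'}(2H))$ for every $i$. This yields immediately $h^1(X',\hol_{X'}(2H)) = 6$ and $h^2(X',\hol_{X'}(2H)) = 3$, proving the lemma. There is no substantive obstacle in this argument: the only point worth highlighting is the vanishing of $H^{*}(\PP^1,\hol(-1))$, which is what decouples the two line bundles from the threefold cohomology and makes the computation purely K\"unneth-theoretic.
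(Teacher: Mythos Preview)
Your proof is correct and follows essentially the same approach as the paper: both use the restriction sequence coming from $X' \in |3H+D|$ on $Z = \PP^1 \times A'$, observe via K\"unneth that $\hol_Z(-H-D)$ has vanishing cohomology (since $H^*(\PP^1,\hol(-1))=0$), and then read off the result from the K\"unneth computation of $H^i(Z,\hol_Z(2H))$.
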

\begin{proof}
We use the exact sequence
$$ 0 \ra \hol_Z (-H - D) \ra 
\hol_Z(2H) \ra  \hol_{X'} (2H) \ra 0,$$
and the fact that by the K\"unneth formula $\hol_Z (-H - D)$ has all  cohomology groups vanishing,
hence 
$$  H^1 ( \hol_{X'} (2H)) \cong H^1(  \hol_Z(2H) ) = H^0(\hol_{\PP^1}(2)) \otimes H^1 (\hol_{A'}) $$
 has dimension 6,
while 
$$  H^2 ( \hol_{X'} (2H)) \cong H^2(  \hol_Z(2H) ) = H^0(\hol_{\PP^1}(2)) \otimes H^2 (\hol_{A'}) $$
has dimension 3.

\end{proof}

We  observe that  the image of $ H^1( \Theta_{X'}) $ inside $H^1 ( \hol_{X'})^2$ corresponds to the deformations
of $A'$ as a complex torus, but each deformation of $X'$ yields a deformation of $A'$ as an Abelian surface.

The deformations of $X'$  contains a family of dimension $ 3 + 7 -3= 7$ if we keep a hypersurface in $\PP^1 \times A'$,
but indeed deforming $X'$ we could  take a deformation of the trivial rank $2$ bundle.

The tangent space to the deformations of the trivial rank 2 bundle on $A'$ is given by
the vector space ${\rm Ext}^1 (\hol_{A'}^2, \hol_{A'}^2) \cong H^1 (\hol_{A'}^4)$, which has dimension $8$, but since we are interested in the deformations
of the associated projective bundle, we get a vector space of dimension $6$,
corresponding to the deformations with trivial determinant, $H^1 (End^0(\hol_{A'}^2))$
($End^0$ denotes as usual the space of trace zero endomorphisms): this is the explanation of the
map to the 6-dimensional vector space 
$  H^1 ( \hol_{X'} (2H)) \cong H^1(  \hol_Z(2H) )$.

\section{Unirational  moduli space for CHPP surfaces and its characterization}

We see immediately that the connected component of the moduli space that we have constructed is  unirational, since  we have a rational parametrization of the family of genus 2 curves $C$, endowed with a 2-torsion divisor in $Pic(C)$: it suffices to take a family of 6 ordered points in $\PP^1$ 
and the corresponding double coverings of $\PP^1$ branched on the said points.

As a consequence of the considerations made in the previous section, we have shown the following result, which constitutes a shorter proof of a theorem by Penegini and Polizzi \cite{pe-po3}:

\begin{thm}\label{mainthm}

The  unirational irreducible connected component corresponding to the CHPP surfaces is  the unique  component of the Main Stream  such that there is a  surface in this component which fulfills  the Gorenstein Assumption \ref{GorAss}  and has  $K_S^2 = 5$, $p_g(S)= q(S)=2$,
and  Albanese map $\al : S \ra A = Alb(S)$ of degree $d=3$.
 In particular, this component coincides with the component  constructed in \cite{pe-po3}.
\end{thm}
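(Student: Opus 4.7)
The plan is to prove that any minimal surface $S$ in the given component of the Main Stream must be a CHPP surface; combined with Theorem \ref{CHPP-large} (CHPP surfaces are closed under deformation in the large), this identifies the component with the unirational CHPP component of Section~2, which contains and hence coincides with the component of \cite{pe-po3}. By upper semicontinuity in families, it suffices to verify the claim for a general $S$ in the component, for which the Main Stream hypothesis guarantees that $A=\mathrm{Alb}(S)$ contains no elliptic curve, and the Chen--Hacon theorem is therefore available.

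The first step is to pin down the invariants of the Tschirnhaus bundle $\sE$ on $A$. By Remark \ref{loc-freeness} the Gorenstein Assumption gives $\mathfrak F=\sE$, so from $\al_*\hol_S=\hol_A\oplus\sE^\vee$, $\al_*\om_S=\hol_A\oplus\sE$ and $p_g=q=2$ one reads off $h^0(\sE)=1$ and $h^i(\sE^\vee)=0$ for $i=0,1$; hence $\chi(\sE)=1$ and Riemann--Roch on $A$ gives $c_1(\sE)^2-2c_2(\sE)=2$. The Casnati--Ekedahl structure theorem for degree $3$ Gorenstein covers places $X\hookrightarrow\PP(\sE^\vee)$ as a divisor of class $3\xi-p^*c_1(\sE)$, with $\xi:=c_1(\hol_{\PP(\sE^\vee)}(1))$; the intersection computation on the $\PP^1$-bundle via the tautological relation $\xi^2=p^*c_1(\sE)\,\xi-p^*c_2(\sE)$ yields $K_S^2=2c_1(\sE)^2-3c_2(\sE)=5$, and solving the pair of equations gives $c_1(\sE)^2=4$, $c_2(\sE)=1$.

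Next I invoke the Chen--Hacon sequence \eqref{MainSequenceCH}: on $A'=A^\vee$ there exists a polarization $\sL=\hol_{A'}(D)$ of Pfaffian $\de$ and a Heisenberg-equivariant exact sequence
\[
0\to\mathfrak H'\to\sL\otimes V^\vee\to\sE'\to 0,
\]
with $V=H^0(A',\sL)$ of dimension $\de$ and $\mathfrak H'$ homogeneous. Since homogeneous bundles on an Abelian variety have numerically trivial Chern classes, the sequence forces numerically $c_1(\sE')\equiv\de D$, so $c_1(\sE')^2=\de^2 D^2=2\de^3$; on the other hand, functoriality of the pull-back under $\Phi_D$ gives $c_1(\sE')^2=\deg(\Phi_D)\cdot c_1(\sE)^2=4\de^2$. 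Comparing forces $\de=2$, and then the rank identity $\rank\sE'=\rank\sE=2=\de=\rank(\sL\otimes V^\vee)$ forces $\mathfrak H'=0$; hence $\sE'\cong\sL\otimes V^\vee$ equivariantly, for a polarization of type $(1,2)$.

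To conclude, I transport the Casnati--Ekedahl embedding to $A'$. The surface $X'$ sits in $\PP(\sE'^\vee)$ as a divisor of class $3\xi'-p^*c_1(\sE')$; the trivialization $\PP(\sE'^\vee)\cong\PP^1\times A'$ coming from $\sE'=\sL^{\oplus 2}$ replaces the tautological $\hol(1)$ by $H\otimes p^*\sL$, so the divisor class becomes $3H+D$, whose global sections form $Sym^3W\otimes V$. Its equation is $G$-invariant for the diagonal Heisenberg action and, by the explicit case analysis carried out at the start of Section~2 together with the normalization of Remark \ref{exchange}, must be of the CHPP form $x_1(y_1^3+\la y_1 y_2^2)+x_2(y_2^3+\la y_2 y_1^2)$; hence $S=X'/G$ is a CHPP surface. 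The main technical point I anticipate is the double computation of $c_1(\sE')^2$ using both the numerical triviality of Chern classes of the homogeneous summand $\mathfrak H'$ and the functoriality under $\Phi_D$; once $\de=2$ is pinned down this way, the vanishing $\mathfrak H'=0$ is automatic by rank and the rest reduces to the already-available Heisenberg-invariant analysis.
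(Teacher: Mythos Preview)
Your proof is correct and follows essentially the same route as the paper: pass to a surface with simple Albanese variety, invoke the Chen--Hacon sequence together with the identification $\mathfrak F=\sE$ from the Gorenstein Assumption, deduce $\de=2$ and $\mathfrak H'=0$, and then read off from the Heisenberg-invariant analysis of Section~2 that $X'$ is an extended CHPP surface, concluding via Theorem~\ref{CHPP-large}.

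The only difference is cosmetic: to obtain $\de=2$ the paper simply quotes Proposition~\ref{K^2} ($K_S^2=\de+3$), whereas you first compute $c_1(\sE)^2=4$, $c_2(\sE)=1$ on $A$ via Riemann--Roch and the Casnati--Ekedahl formula $K_S^2=2c_1(\sE)^2-3c_2(\sE)$, and then compare the two evaluations of $c_1(\sE')^2$ (as $2\de^3$ from the exact sequence, using numerical triviality of $c_1(\mathfrak H')$, and as $4\de^2$ from pull-back under $\Phi_D$). This is exactly the computation hidden inside the proof of Proposition~\ref{K^2}, just carried out on $A$ rather than on $A'$; it has the mild advantage of isolating the invariants of $\sE$ before invoking Chen--Hacon. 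Both arguments share the same small informality about ensuring that a single surface simultaneously satisfies the Gorenstein Assumption and has $A$ without elliptic curves, which is implicitly handled by openness.
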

\begin{remark}\label{III}
In the above Theorem, one can remove the assumption `of the Main Stream' and
replace the Gorenstein Assumption by the Generality Assumption,
and indeed just property (III) with $ \mathfrak H' =0$.
\end{remark}

\begin{proof}
We have the  isogeny $ \Phi_D : A' \ra A' / \sK(D) = A' / G $, and 
if we have a component  $\mathfrak N$ of the Main Stream, there is some surface $S$ such that $A = Alb(S)$ contains no elliptic curve.

  Under this condition, by Theorem 3.5 of \cite{c-h}, we have an exact sequence for the pull back $\mathfrak F'$ of $\mathfrak F$

$$ 0 \ra \mathfrak H'  \ra \sL \otimes V^{\vee}  \ra \mathfrak F'\ra 0,$$
where $ \mathfrak H'$ is a homogeneous bundle and  $\sL $  a polarization 
of Pfaffian $\de = \  \rank ( \mathfrak H') + 2$.

We consider now those surfaces $S$ which satisfy the Gorenstein Assumption \ref{GorAss}: then  $\mathfrak{F}=\sE$ (see Remark \ref{loc-freeness}), and we have  $ K_S^2 = 3 + \de$  (see Proposition \ref{K^2}). This implies immediately that  $\de=2$ and therefore $\mathfrak{H}'=0$.

Hence, we get that $ D$ yields  a polarization of type $(1,2)$.

Taking the pull back $S'$ to $A'$ of the surfaces $S$, we find that  on an open set
of $\mathfrak N$, by the theory by Miranda-Casnati-Ekedahl, we get a section of 
$$ S^3(\sE') \otimes \det(\sE')^{-1} = S^3 (V^{\vee} ) \otimes \hol_{A'}(D),$$
 an equation defining an extended  CHPP surface $Z'$.
 For the surfaces $S$ satisfying the Gorenstein assumption we have $Z' = X'$, the pull back of the canonical model $X$. Hence the latter  surfaces $S$ are CHPP surfaces 
 and we conclude by Theorem \ref{CHPP-large}
 that $\mathfrak N$ is the connected component of the CHPP surfaces.

Finally, thanks to Proposition 6.1 of \cite{pe-po3}, we see immediately that this  component $\mathfrak{N}$ must coincide with the one constructed by Penegini-Polizzi in \cite{pe-po3}.

\end{proof}

\begin{remark}
If (III) of the Generality Assumption holds with $\mathfrak H'=0$ for some surfaces of the component $\mathfrak N$, then  their  canonical models $X$ 
have a pull-back $X'$ with a birational map $\psi : X' \dashrightarrow \
Z'$, where $ Z' \subset A' \times \PP^1$ is an extended CHPP surface with dualizing sheaf $\omega_{Z'} = \hol_{Z'} (H + D)$.

$\psi$ is an isomorphism where $X' \ra A'$ is finite, that is, outside a finite	 number of fibres
of $ \PP^1 \times A' \ra A'$. Where $Z'$ does not contain such a fibre, $Z' \ra A'$ is finite and
$Z'$ coincides with  the Stein factorization, so $\psi$ is a morphism there.

There remain the fibres which are contained in $Z'$, and for which $X' \ra A'$ has a positive dimensional fibre.

 By Proposition \ref{SingCHPP}, follows 
  that  $Z'$ is normal.

Take a blow up of $S'$, say $S^*$, such that $\psi$ becomes a birational
morphism on $S^*$.

Use the same notation for a divisor and its pull back to $S^*$.

By adjunction we have $K_{S^*} = K_{Z'} - \sA$, where $ \sA$ is the adjoint divisor (an effective divisor).

Similarly, $K_{S^*} = K_{S'} + E$, for some effective exceptional divisor $E$.

The conclusion is that

$$ K_{Z'} = K_{S'} + E + \sA .$$

Since $K_{S'}^2 = K_{X'}^2 = K_{Z'}^2 = 20$, and $E + \sA$ has negative self intersection
(alternatively, use that $K_{Z'} , K_{S'}$ are nef and big, and uniqueness of the Zariski 
decomposition)
we conclude  then that $ E + \sA = 0$, which means that $K_{Z'} $ pulls back to $ K_{X'}$.
Since $ K_{X'}$ is ample, it follows that  $X' = Z'$ (else there is a curve $C$ in $X'$ which
is contracted to a point in $Z'$, a contradiction), and $X$ is a CHPP surface. 

In conclusion, in view of Theorem \ref{CHPP-large} we have proven the statement of Remark \ref{III}.

\end{remark}

\section{The basic construction for PP4 surfaces, having $p_g=q=2, K^2=6$ and Albanese map of degree $4$}

Let here $A'$ be an abelian surface with a divisor $D$ yielding a polarization of type $(1,3)$. Set $\sL : =  \hol_{A'}(D)$.
	
Then, $V:=H^0(A', \hol_{A'}(D))$ is a three dimensional vector space and there is a group of translations $G:={\sK}(D)\iso (\Z/3\Z)^2$ leaving the isomorphism class of $\hol_{A'}(D)$ invariant. $V$ is the Schr\"odinger representation of the Heisenberg group $\sH_3$ associated
 to $H = \Z/3\Z$.
	
We are going to describe, using the method of Casnati-Ekedahl \cite{c-e}, some degree $4$ coverings $S'$ of $A'$ which
are invariant under the action of the group $G$.

 We shall call the quotients $S := S' /G$   PP4 surfaces, since our family coincides generically with the family constructed by Penegini and Polizzi in \cite{pe-po4}.

As usual, we set $ \sE' : = V^{\vee} \otimes  \hol_{A'}(D) =  V^{\vee} \otimes  \sL$ and we 
shall consider, according to Casnati-Ekedahl \cite{c-e}, 
 a rank two subbundle 
\[
\sF \to S^2(\sE')= (\sL^{\otimes 2})\otimes  S^2 ( V^{\vee} ) =  6 {\sL}^{\otimes 2}= \bigwedge^2(3\sL)\oplus \bigwedge^2(3\sL).
\]

By Casnati-Ekedahl we must have  either $ \det(\sF) = \det (\sE')= 3 D $,
and  $$c_2(\sF) = \de^2 \ K_S^2   -2 c_1(\sE')^2 + 4 c_2(\sE') = 9  ( \ K_S^2 - 4),$$
hence for $K_S^2 =6$ we must have $c_2(\sF)= 18$ (whereas for $K_S^2 =5$ we would 
have $c_2(\sF)= 9$).

We observe that 
$$
h^0(\sL)=3,  \quad D^2=6$$

and    $|D|$ has  no base points if $(A',D)$ is not a product polarization (see Section 1, \eqref{pol-prod-(1,d)}).
We shall make this assumption from now on.

We define the sheaf $\sF$ as the  cokernel of the natural homomorphism 
$$\hol_{A'} \ra V^{\vee} \otimes \hol_{A'}(D)$$
given by the natural invariant  $s : =  \sum_j y_j s_j \in  V^{\vee} \otimes V$ where $\{s_1,s_2,s_3\}$ is a basis of $V= H^0(\sL)$
and $\{y_1,y_2,y_3\}$ is the dual  basis of $ V^{\vee}$. Namely, we have
\begin{equation}
	\xymatrixcolsep{2pc}
	\xymatrix{
		0 \ar[r] &
	\hol_{A'}
	\ar[r]^-{s=
		\begin{pmatrix}
			s_1 \\
			s_2 \\
			s_3
	\end{pmatrix}} &
	V^{\vee} \otimes \hol_{A'}(D) = 3\hol_{A'}(D) \ar[r] & \sF \ar[r] & 0
}.
\end{equation}
  Since  $H^0(\sL)$ has no base points, $\sF$ is a
rank 2 bundle.

We shall soon show that  $\sF$ is  the image of the map $\wedge s$ in the  Koszul complex corresponding  to the section $s=\sum_j y_j s_j \in V^{\vee} \otimes H^0({\sL}).$

What we are doing is to consider the Koszul  complex associated to  the section $s= {}^t(s_1,s_2,s_3)\in H^0(\sE')$ (see \cite[Appendix B.3.4]{fulton}). Namely, defining $Z(s)$ to be the zero subscheme of $s$, we consider the sequence
\begin{equation}\label{koszulcomplex}
	0 \ra \bigwedge^3 {\sE'}^\vee  \ra \bigwedge^2 {\sE'}^\vee \ra\bigwedge^1 {\sE'}^\vee={\sE'}^\vee \ra  \sI_{Z(s)} \ra 0.
\end{equation}
which is exact on $A'=A'\setminus Z(s)$.

Note that $Z(s)=\emptyset$ if we assume that  $s_1,s_2,s_3$ have no common zeroes, as does in our case occur
for a general choice of $(A', D)$.

Then, since $\sE'=V^\vee \otimes \hol_{A'}(D)$, we get, in a simpler notation, the following exact sequence
\begin{equation}\label{koszulcomplex}
	0 \to \bigg( \bigwedge^3 V \bigg)\otimes \hol_{A'}(-3D)  \ra \bigg(\bigwedge^2 V\bigg) \otimes \hol_{A'}(-2D) \ra V \otimes \hol_{A'}(-D)=3\hol_{A'}(-D) \ra \hol_{A'} \ra 0.
\end{equation}

Dualizing the previous sequence, we get
\begin{equation}
	0 \to \hol_{A'} \overset{s}{\ra} V^\vee \otimes \hol_{A'}(D)=3\hol_{A'}(D) \overset{\wedge s}{\ra} \bigg(\bigwedge^2 V^\vee \bigg) \otimes \hol_{A'}(2D) \ra  \bigg( \bigwedge^3 V^\vee \bigg)\otimes \hol_{A'}(3D) \to 0
\end{equation}

Hence, 
\begin{equation}
	 \sF \overset{\wedge s}{\hookrightarrow}  \bigg(\bigwedge^2 V^{\vee}\bigg) \otimes \hol_{A'}(2D)=3\hol_{A'}(2D).
\end{equation}

We define  the quadruple cover $S'\to A'$ as given by the zero locus of a section
\[
\eta \in H^0(A',  \mathcal{F}^\vee \otimes S^2(\mathcal{E}')).
\]
This means that the fibres of $S'\to A'$ are the intersections of two conics in the $\PP^2$-bundle $Proj(\sE')$  (see \cite{c-e}).

We describe now  a 2-dimensional family of sections $\eta_{\la,\mu}: = \la i_1 \oplus \mu i_2$ depending on two complex parameters $\la, \mu$. Namely,
\begin{equation}
	\xymatrixcolsep{4pc}
	\xymatrix{
	0 \ar[r] & \mathcal{F} \ar[r]^-{\eta_{\lambda,\mu}}_-{\lambda i_1 \oplus \mu i_2}	& S^2(\sE') \cong \big((\bigwedge^2 V^{\vee}) \otimes \hol_{A'}(2D) \big) \bigoplus \big( (\bigwedge^2 V^{\vee}) \otimes \hol_{A'}(2D) \big)
}
\end{equation}

where $i_1, i_2$ are the respective inclusions of $\sF$ in the respective summands.

We need  to give explicitly our isomorphism: 
$$ S^2(\sE') \cong \bigwedge^2 (V^{\vee}) \otimes \hol_{A'}(2D))  \bigoplus  \bigwedge^2 (V^{\vee}) \otimes \hol_{A'}(2D)).$$

We take this isomorphism in  such a way that the respective summands correspond 
to the  following irreducible subrepresentations of the Heisenberg group inside $S^2(V^{\vee})$:
	\[  S^2(V^{\vee}) = 
	\langle y_1^2, y_2^2, y_3^2\rangle \bigoplus \langle y_2y_3, y_1y_3, y_1y_2 \rangle
	\cong  V  \bigoplus V \cong \bigwedge^2 (V^{\vee}) \bigoplus  \bigwedge^2 (V^{\vee}).
	\]

Observe in fact  that $y_i y_j =  \frac{y_1 y_2 y_3}{y_k} $, for $i,j,k$ a permutation of $\{1,2,3\}$ of signature $1$,
and similarly  $y_i \wedge y_j =   \frac{y_1\wedge  y_2 \wedge y_3}{y_k}\footnote{ here dividing by $y_k$ stands for the contraction
with the corresponding vector in the dual basis.}
 \in \CC \otimes (V^{\vee})^{\vee}$.

\bigskip

Recall that the map $\wedge s \colon 3\hol_{A'}(D)  \to \mathcal{F}\subset 3\hol_{A'}(2D)$ is given as follows
\begin{equation}
	\sigma={^t}(\sigma_1, \sigma_2, \sigma_3) \longmapsto s \wedge \sigma=
	\begin{pmatrix}
		\sigma_3 s_2 - \sigma_2 s_3 \\
		\sigma_1 s_3 - \sigma_3 s_1 \\
		\sigma_2 s_1 - \sigma_1 s_2 
	\end{pmatrix}
\end{equation}

and then

\begin{equation}
	\begin{pmatrix}
		\sigma_3 s_2 - \sigma_2 s_3 \\
		\sigma_1 s_3 - \sigma_3 s_1 \\
		\sigma_2 s_1 - \sigma_1 s_2 
	\end{pmatrix}
	\overset{\eta_{\lambda,\mu}}{\longmapsto} 
	\lambda 
		\begin{pmatrix}
		\sigma_3 s_2 - \sigma_2 s_3 \\
		\sigma_1 s_3 - \sigma_3 s_1 \\
		\sigma_2 s_1 - \sigma_1 s_2 
	\end{pmatrix}
	\bigoplus \mu 
		\begin{pmatrix}
		\sigma_3 s_2 - \sigma_2 s_3 \\
		\sigma_1 s_3 - \sigma_3 s_1 \\
		\sigma_2 s_1 - \sigma_1 s_2 
	\end{pmatrix}
\end{equation}

For each element $\s \in V^\vee \otimes \sL$ we get therefore the  following equations 

\begin{equation}
	\begin{split}
		\sigma \longmapsto \lambda 
		\begin{pmatrix}
			y_1^2(\sigma_3 s_2 - \sigma_2 s_3)  + \\
			+y_2^2(\sigma_1 s_3 - \sigma_3 s_1) + \\
			+y_3^2(\sigma_2 s_1 - \sigma_1 s_2)
		\end{pmatrix}&
		+\mu
		\begin{pmatrix}
			y_2y_3(\sigma_3 s_2 - \sigma_2 s_3) +\\
			+y_1y_3(\sigma_1 s_3 - \sigma_3 s_1) + \\
			+y_1y_2(\sigma_2 s_1 - \sigma_1 s_2)
		\end{pmatrix}.
	\end{split}
\end{equation}

The generators of the ideal sheaf $\mathcal{I}_{S'}$ of the quadruple cover $S'$ are given,
since  $\sL$ is generated by global sections,  by the images of the space   $H^0( V^{\vee} \otimes \hol_{A'}(D))$,
hence by the images of the elements
\begin{equation}
	\sigma = (\s_1,0,0), ~ (0,\s_2,0), ~ (0,0,\s_3).
\end{equation}

Namely, 
\begin{equation}
	\begin{split}
		&\s_1 F_1: = \s_1( \lambda(y_2^2s_3 - y_3^2s_2) + \mu (s_3y_1y_3-s_2y_1y_2))	\\
		&\s_2 F_2 : =\s_2  (\lambda (-s_3y_1^2+s_1y_3^2) + \mu (-s_3y_2y_3+s_1y_1y_2))\\
		&\s_3 F_3 : = \s_3 (\lambda (s_2y_1^2-s_1y_2^2) +\mu(s_2y_2y_3-s_1y_1y_3))
	\end{split}
\end{equation}

Rearranging them, and observing that $$\s_j F_j=0  \quad  \forall \s_j \in H^0(\sL) \quad  \iff \quad  F_j =0, $$

we finally obtain the following equations for $S'$:
\begin{equation}
	\begin{split}
		&F_1=s_3(\lambda y_2^2 +\mu y_1y_3) - s_2 (\lambda y_3^2 +\mu y_1y_2)	
		 =0 \\
		&F_2= s_1(\lambda y_3^2 +\mu y_1y_2) - s_3 (\lambda y_1^2 +\mu y_2y_3) = 0 \\
		&F_3= s_2(\lambda y_1^2 +\mu y_2y_3) - s_1 (\lambda y_2^2 +\mu y_1y_3) = 0.
	\end{split}
\end{equation}

We observe now that $S'$ is a subscheme of 	
 $\PP^2\times A' = \PP(V) \times A' $, which has  an action of $G$ of product type, where $G$ acts on $\PP^2=\PP(V)$ via the previous action of $\mathcal{H}_3$ on $V$, whereas  $G$ acts on $A'$ by translations.

The fact that an element $c$ in  the centre  of $\sH_3$ acts trivially on $ \PP(V) \times A' $ follows since it acts on the variables $y_i$ by 
multiplication by $c^{-1}$, since the $y_i$'s are a basis for $V^{\vee}$, while it acts on the sections $s_i$ by multiplication by $c$.
Hence the action of the Heisenberg group descends to an action of $G$.
 
We conclude that $S'$ is $G$-invariant by the following lemma.

\begin{lemma}
	The surface $S'$ is $G : = \sK(D)$-invariant.
\end{lemma}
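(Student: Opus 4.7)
The plan is to reduce $G$-invariance to the stronger property of $\sH_3$-equivariance of the entire Casnati-Ekedahl construction. The centre $\mu_3 \subset \sH_3$ acts by a scalar $c$ on $V \iso H^0(\sL)$ and therefore by $c^{-1}$ on $V^\vee$, so it acts trivially on $\PP(V)$; moreover the centre covers the trivial translation on $A'$. Hence any $\sH_3$-invariant subscheme of $\PP(V) \times A'$ is automatically $G$-invariant, and it suffices to prove that the ideal of $S'$ is $\sH_3$-stable.

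First, I would observe that the section $s = \sum_j y_j s_j \in H^0(A', V^\vee \otimes \sL) \iso V^\vee \otimes V$ is $\sH_3$-invariant: under the canonical identification $V^\vee \otimes V \iso \End(V)$ it corresponds to $\Id_V$, which is fixed by the diagonal action of $GL(V)$, hence of $\sH_3 \subset GL(V)$. Consequently, the inclusion $\hol_{A'} \hookrightarrow \sE'$, the Koszul differential $\wedge s \colon \sE' \to \bigwedge^2 \sE'$, and hence its image $\sF$, are all $\sH_3$-equivariant subobjects/morphisms.

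Next, I would verify that the direct-sum decomposition $S^2(V^\vee) = \langle y_1^2, y_2^2, y_3^2\rangle \oplus \langle y_2 y_3, y_1 y_3, y_1 y_2\rangle$ is $\sH_3$-stable, each summand being isomorphic to the Schr\"odinger representation $V$. This is immediate on the two generators of $\sH_3$ (a cyclic permutation and a diagonal matrix of cube roots of unity). Thus the identification $S^2(\sE') \iso \big(\bigwedge^2 V^\vee \otimes \sL^{\otimes 2}\big) \oplus \big(\bigwedge^2 V^\vee \otimes \sL^{\otimes 2}\big)$ used in the construction is Heisenberg-equivariant, the two inclusions $i_1, i_2 \colon \sF \hookrightarrow S^2(\sE')$ into a single factor are $\sH_3$-equivariant, and hence so is $\eta_{\lambda,\mu} = \lambda i_1 \oplus \mu i_2$ for every choice of scalars $\lambda,\mu$.

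Therefore $\im(\eta_{\lambda,\mu}) \subset S^2(\sE')$ is an $\sH_3$-invariant subsheaf, and the closed subscheme $S' \subset \PP(V) \times A'$ it cuts out is $\sH_3$-invariant, hence $G$-invariant. No serious obstacle appears; the only care needed is to fix the dual conventions for the Heisenberg action on $V$ versus $V^\vee$ consistently, which is exactly what makes $s$ invariant. As a concrete sanity check one may verify directly on $F_1, F_2, F_3$ that the cyclic generator of $\sH_3$ permutes them as $F_1 \mapsto F_2 \mapsto F_3 \mapsto F_1$ and the diagonal generator rescales each $F_i$ by a cube root of unity, so the ideal $(F_1, F_2, F_3)$ is $\sH_3$-stable.
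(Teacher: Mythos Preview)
Your proof is correct and covers essentially the same ground as the paper's. The paper leads with the direct verification on the two generators (the diagonal generator rescales $F_1,F_2,F_3$ by $1,\e^2,\e$ and the cyclic generator permutes them), then remarks that one can also argue via the Heisenberg equivariance of the inclusion $\sF\hookrightarrow S^2(\sE')$; you lead with the equivariance argument spelled out in full and append the direct check as a sanity test, so the two proofs are the same with the order of presentation reversed.
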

\begin{proof}
The group $\mu_3$ acts,  if $\e$ is a primitive third root of unity,
multiplying $s_1, s_2, s_3$ by $1, \e, \e^2$, and $y_1, y_2, y_3$ by $1, \e^2, \e$: hence the equations 
$F_1, F_2, F_3$ are respectively multiplied by $1, \e^2, \e$.

The group $\ZZ/3$ acts by a cyclical permutation of $s_1, s_2, s_3$, and with the  same permutation
of $y_1, y_2, y_3$, hence $F_1, F_2, F_3$ are also cyclically permuted.

We can also show our assertions using the fact that the inclusion of $\sF$ inside $S^2(\mathcal{E}')$
was chosen to be Heisenberg equivariant.

\end{proof}

		 The above $G$-invariant equations on $\PP^2 \times A'$ 
can be written as  describing a determinantal variety of Hilbert-Burch type,  given by the vanishing of the $2\times2$ minors of the following matrix (we set $\la=1$)
		
		\[
		M=\begin{pmatrix}
			s_1 &  s_3 & s_2 \\
			y_1^2 + \mu   y_2  z_ 3 & y_3^2 + \mu   y_1 y_2 &  y_2^2  + \mu y_1y_3
		\end{pmatrix}
		\]

	\begin{remark}
		\rm {
			The global Hilbert-Burch  resolution for the ideal sheaf $\mathcal{I}_{S'}$ is the following
			\[
			0   \ra 	\hol_{A'}(-2H-2D)\oplus \hol_{A'}(-4H-D) \overset{^t M}{\ra} \hol_{A'}(-2H-D)^{\oplus 3}   \overset{(F_1,- F_3,F_2)}{\ra}  \mathcal{I}_{S'} \ra 0
			\]
	
	}
	\end{remark}

\begin{proposition}\label{smooth}
		The surface $S':=S'_{\mu}$ is smooth for a general $\mu \in \CC$ and a general pair $(A', D)$.
	\end{proposition}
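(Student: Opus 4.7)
The plan is to apply the Jacobian criterion combined with upper semicontinuity. Since the surfaces $S'_\mu \subset \PP^2 \times A'$ form an algebraic family over the irreducible parameter space of triples $(A', D, \mu)$ (moduli of $(1,3)$-polarized abelian surfaces crossed with the $\mu$-line), the locus in this parameter space where $S'_\mu$ is singular is closed. Hence it suffices to exhibit one smooth example in order to conclude smoothness on a dense open subset.

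It is convenient to reformulate $S'_\mu$ as the fiber product $A' \times_{\PP^2} \PP^2$ via the pair of maps
$$\phi_1 : A' \to \PP^2, \quad z \mapsto [s_1(z) : s_3(z) : s_2(z)], \qquad \phi_2^\mu : \PP^2 \to \PP^2, \quad y \mapsto [Q_1(y) : Q_3(y) : Q_2(y)],$$
where $Q_1=y_1^2+\mu y_2y_3$, $Q_3=y_3^2+\mu y_1y_2$, $Q_2=y_2^2+\mu y_1y_3$. The rank-$1$ condition on $M$ translates exactly into $\phi_1(z)=\phi_2^\mu(y)$, and the standard fiber-product smoothness criterion gives: $S'_\mu$ is smooth at $(y,z)$ iff
$$\mathrm{Im}\bigl(d\phi_1|_z\bigr) + \mathrm{Im}\bigl(d\phi_2^\mu|_y\bigr) \;=\; T_{\phi_1(z)}\PP^2.$$
A direct computation of $\det$ of the Jacobian of $(Q_1,Q_3,Q_2)$ in $(y_1,y_2,y_3)$ yields the cubic
$$2\mu^2(y_1^3+y_2^3+y_3^3) - 2(4+\mu^3)\,y_1y_2y_3,$$
which cuts out a smooth Hesse cubic $R_\mu \subset \PP^2$ for all but finitely many $\mu$. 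On the other side, for general $(A',D)$ the morphism $\phi_1$ is generically \'etale with ramification divisor $R_1 \in |3D|$ on $A'$ (by adjunction, since $K_{A'}=0$).

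At any point $(y,z) \in S'_\mu$ with either $y \notin R_\mu$ or $z \notin R_1$, one of the two differentials is already surjective and smoothness follows automatically. The remaining case is the ``doubly ramified'' locus, namely the finite set of $(y,z) \in S'_\mu$ with $y \in R_\mu$ and $z \in R_1$; here one must check that the two 1-dimensional images are transverse in $T_{\phi_1(z)}\PP^2$. This is the main obstacle, since at such points both Jacobians drop rank and one has to rule out that their images coincide. The cleanest route, consistent with the computer-aided approach used elsewhere in the paper, is to compute the full $3\times 5$ Jacobian of $(F_1,F_2,F_3)$ at one explicit numerical triple $(A',D,\mu)$ (say with $A'$ a specific $(1,3)$-polarized abelian surface and $\mu$ a specific rational value), verify directly that it has rank $2$ at every point of $S'_\mu$ (checking in particular the finite list of potentially bad base points of the pencil, which one can enumerate from the conditions $s_iy_j^2=s_jy_i^2$ together with $s_iy_i=s_jy_j$), and then invoke upper semicontinuity to conclude smoothness on a Zariski open subset of the parameter space.
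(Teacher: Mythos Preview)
Your fibre-product reformulation is correct and is in fact the viewpoint the paper itself adopts later, in the section on singular sets of extended PP4 surfaces. However, your argument is not complete: at the doubly ramified locus you do not actually carry out the verification, but only propose a computer check at ``one explicit numerical triple $(A',D,\mu)$''. Producing explicit theta-functions $s_1,s_2,s_3$ for a concrete $(1,3)$-polarized abelian surface, enumerating the finite bad locus, and verifying the rank condition there is a substantial calculation that you have not done. As written, the proof stops precisely at the point where the real content lies.

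The paper's proof is both shorter and complete, and the key idea you are missing is the choice of specialization. Rather than working at a generic $\mu$ (where, as you correctly compute, the ramification of $\phi_2^\mu$ is a smooth Hesse cubic), the paper sets $\mu=0$. Then the second row of $M$ becomes $(y_1^2,\,y_3^2,\,y_2^2)$, the local equations on $\{s_1\neq 0\}$ are simply
\[
s_1 y_3^2 - s_3 y_1^2 = 0,\qquad s_1 y_2^2 - s_2 y_1^2 = 0,
\]
and the ramification curve of $\phi_2^0$ degenerates to the coordinate triangle $y_1y_2y_3=0$. The transversality check at the doubly ramified points then reduces to an elementary case analysis (using $\mathfrak S_3$-symmetry, one may assume $y_3=0$), and the only facts needed about a general $(A',D)$ are that the curves $\{s_i=0\}$ are smooth and that $\{s_2=0\}$ and $\{s_3=0\}$ meet transversally. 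No computer is required. Your framework would also succeed at $\mu=0$, since the fibre-product criterion does not require $\phi_2^\mu$ to have smooth ramification; you simply chose to avoid this value, and thereby lost the simplification that makes the proof go through by hand.
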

\begin{proof}
It suffices to show this for $\mu=0$. Using the symmetry of these equations, and since, for general $A'$,
$s_1, s_2, s_3$ do not have common zeros, we may assume that if  $p\in S'$ is  singular, then $s_1\neq 0$
(at $p$).

Then by the criterion of bordering  minors we have two equations which locally define $S',$ namely 
$$ s_1 y_3^2 - s_3 y_1^2 = 0, \ s_1 y_2^2 - s_2 y_1^2 = 0,$$
and requiring that the respective   $z$-gradients  are proportional implies the proportionality of the vectors
$$ ( - s_3 y_1 , 0, s_1 y_3), \   ( - s_2 y_1 ,  s_1 y_2, 0),$$
and again by symmetry, since we must have $y_2 y_3=0$,  we may assume $y_3=0,$ and either $y_2=0$ or $s_3 y_1=0$.

In the first case we would have $ s_2= s_3 =0$; looking then at the gradient on $A'$, we would have  that $s_2=0$ and $s_3=0$ do not intersect transversally, a contradiction
since we know that the effective divisors $\{s_2=0\}$ and $\{s_3=0\}$ intersect transversally (see \cite[p. 776, Prop. 2.2]{pe-po4}).

If $ y_3= y_1=0$ we would have the contradiction that $s_1=0$. 

Whereas, if $y_3=s_3=0$, then  by the remark in the previous line  $ y_1 \neq 0$; if  the gradient of $s_3$ vanishes, 
then  we get a singular point of $s_3=0$. But for general $A'$ the divisors of the sections
$s_j$ are smooth, a contradiction. Hence, the gradient of the first equation on $A'$ is non zero, and we have a singular point
only if $ y_2 = s_2=0$ and the gradients of $s_3, s_2$ are proportional. But we have already seen that this is impossible.

\end{proof}

\begin{proposition}\label{F}
The family of  Heisenberg invariant deformations $\sF_t$ of the bundle $\sF$ with fixed determinant $ \det (\sF_t) = 3 D$
is smooth of dimension 2, is parametrized by $Pic^0(A')$, and consists, for  $M \in Pic^0(A')$, of the cokernel bundles
$$ \sF_M = coker ( f : \hol_{A'} (-3M) \ra V^{\vee} \otimes \hol_{A'}(D+M)), \ f = \sum_j x_j y_j,$$
for $y_1,y_2,y_3$ a canonical basis of  $V^{\vee}$ and $x_1,x_2,x_3$ a canonical basis of $H^0 (\hol_{A'} (4M + D) )$. 
\end{proposition}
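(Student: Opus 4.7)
The plan is to prove the proposition in three steps: (i) verify that the prescribed construction $M \mapsto \sF_M$ really yields well-defined Heisenberg-equivariant rank two bundles, (ii) compute the tangent space to the equivariant deformation functor and match it with $\dim \Pic^0(A')$, and (iii) show every equivariant deformation is of this form. For (i), I would observe that $\hol_{A'}(D+M)$ is algebraically equivalent to $\hol_{A'}(D)$ for $M \in \Pic^0(A')$, hence $\Phi_{D+M}=\Phi_{D}$, the kernel $\sK(D+M)=\sK(D)=G$, and the associated Heisenberg extension coincides with $\sH_3$. After choosing a theta structure on $\hol_{A'}(D+M)$ compatible with that on $\hol_{A'}(D)$, the space $H^0(\hol_{A'}(D+M))$ becomes canonically identified with the Schr\"odinger representation $V$, and likewise $H^0(\hol_{A'}(D+4M))\cong V$. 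The defining map of $\sF_M$ corresponds to an element of $V^\vee\otimes H^0(\hol_{A'}(D+4M))=V^\vee\otimes V$; by Schur's lemma the $\sH_3$-invariant part of $V^\vee\otimes V$ is one-dimensional and canonically represented by $\sum_j x_j\otimes y_j$. Hence this map is unique up to scalar, and the same base-point-freeness argument used for $\sF$ (Section 1, 6 i)) shows that, for general $(A',D)$ and $M$, the cokernel $\sF_M$ is locally free of rank two; equivariance is automatic from the construction.

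For step (ii), the tangent space to the $\sH_3$-equivariant deformations with fixed topological determinant at $[\sF]=[\sF_0]$ is $\Ext^1(\sF,\sF)_0^{\sH_3}$. Applying $\Hom(-,\sF)$ to the defining resolution $0\to \hol_{A'}\to V^\vee\otimes \hol_{A'}(D)\to \sF\to 0$ and taking $\sH_3$-invariants, the only surviving summand is the invariant part of $H^1(A',\hol_{A'})$, which is all of $H^1(A',\hol_{A'})$ since translations (hence $\sH_3$) act trivially on this cohomology group; this yields the dimension $2=\dim\Pic^0(A')$. Smoothness then follows because the tangent space at $[\sF_0]$ has exactly the dimension of the explicitly constructed family, so first-order deformations are unobstructed and the family sweeps out an open subset of the equivariant deformation space at every point.

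For step (iii), given any $\sH_3$-equivariant rank two deformation $\tilde\sF$ of $\sF$, semicontinuity forces the existence of a nonzero $\sH_3$-equivariant surjection $\tilde\sF \twoheadrightarrow \sL_2$ onto a line bundle; equivariance forces $\sL_2$ to carry a Heisenberg structure of type $(1,3)$, hence $\sL_2\cong \hol_{A'}(D+M)$ for a unique $M\in\Pic^0(A')$, and comparison of determinants forces the kernel to be $\hol_{A'}(-3M)$, identifying $\tilde\sF$ with $\sF_M$. The main obstacle I expect is precisely this exhaustiveness argument: producing a Heisenberg-equivariant surjection onto a line bundle of the right algebraic-equivalence class requires controlling $\Hom_{\sH_3}(\tilde\sF,\hol_{A'}(D+M))$ as $M$ varies, and ruling out ``exotic'' equivariant extensions (for instance, ones in which $\tilde\sF$ would admit only quotients by line bundles algebraically inequivalent to $\hol_{A'}(D)$). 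The clean way to handle this is to use the fact that the Schr\"odinger representation is the unique irreducible $\sH_3$-module with nontrivial central character, which together with a Fourier--Mukai/Heisenberg-equivariant decomposition of $\tilde\sF$ on the universal cover pins down $\sL_2$ uniquely up to $\Pic^0(A')$.
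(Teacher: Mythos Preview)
Your overall strategy is close to the paper's, but there are two places where it diverges in ways that matter.

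\textbf{The tangent space computation (step ii).} You assert that after applying $\Hom(-,\sF)$ and taking $\sH_3$-invariants ``the only surviving summand is $H^1(\hol_{A'})$'', but this is not what actually happens. The paper runs essentially the same long exact sequence (your $\Hom(-,\sF)$ applied to the defining resolution is their sequence (IV)), but feeding it requires first computing the invariant cohomology of $\sF$ and of $V\otimes\sF(-D)$ via two auxiliary sequences. The outcome is that $H^1(End(\sF))^{\sH}$ sits in a short exact sequence between \emph{two} copies of $H^1(\hol_{A'})$, so it is $4$-dimensional; only after splitting off the trace part $\hol_{A'}\subset End(\sF)$ does one get $H^1(End^0(\sF))^{\sH}\cong H^1(\hol_{A'})$. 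Your one-line summary hides this cancellation and would not be convincing as written.

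\textbf{Smoothness and exhaustiveness (steps ii--iii).} The paper does not argue smoothness by dimension comparison; it computes $H^2(End^0(\sF))^{\sH}=0$ directly from the same chain of sequences, so the deformation functor is unobstructed. Once the tangent space is \emph{identified} with $H^1(\hol_{A'})=T_0\Pic^0(A')$, the family $\{\sF_M\}$ is automatically the universal one, and your step (iii) becomes unnecessary. Your proposed approach to (iii)---producing an equivariant surjection $\tilde\sF\twoheadrightarrow \hol_{A'}(D+M)$---is genuinely problematic: already for $\sF=\sF_0$ the natural quotients are torsion-free sheaves of the form $\sI_Z(2D)$ rather than line bundles (see the discussion preceding Lemma~\ref{F'=F}), so an equivariant line-bundle quotient of the type you want need not exist. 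The paper's route via the obstruction-space vanishing is both cleaner and avoids this trap entirely.
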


\begin{proof}
The tangent space to the deformations of $\sF$ with fixed determinant is the space 
$$ H^1 ( End^0( \sF)),$$
where $ End^0( \sF)$ denotes the subbundle of trace zero endomorphisms; that is, we have
the direct sum decomposition  $ End( \sF) =  End^0( \sF) \oplus \hol_{A'} $.

The Heisenberg invariant deformations have as tangent space the subspace 
$$ H^1 ( End^0( \sF))^{\sH}.$$

By  the exact sequence

$$ (I) \ 0 \ra \hol_{A'} \ra V^{\vee} \otimes \hol_{A'}(D) \ra \sF \ra 0 $$

follows the exact sequence 

$$(II) \ 0 \ra  \sF^{\vee} \ra V \otimes \hol_{A'}(- D) \ra  \hol_{A'}  \ra 0 ,$$

hence

$$ (III) \  0 \ra V \otimes  \hol_{A'}(- D) \ra  V \otimes V^{\vee} \otimes \hol_{A'} \ra  V \otimes \sF (- D) \ra 0 ,$$

and  finally the exact sequence

$$ (IV) \ 0 \ra  \sF^{\vee} \otimes \sF = End( \sF) \ra V \otimes \sF (- D) \ra  \sF  \ra 0 .$$

The respective long exact cohomology sequences (in cases (I), (III)) yield:
$$ 0 \ra \CC \ra V \otimes V^{\vee} \ra H^0(\sF) \ra H^1 (\hol_{A'}) \ra 0, $$
$$ H^1(\sF) \cong  H^2 (\hol_{A'})\cong \CC,$$
$$ H^0 ( V \otimes \sF (- D)) \cong V \otimes V^{\vee}, \ \ $$
$$ 0 \ra V \otimes V^{\vee} \otimes H^1 (\hol_{A'}) \ra H^1 ( V \otimes \sF (- D)) \ra V  \otimes H^2 (\hol_{A'} (-D))= 
V \otimes V^{\vee}  \ra $$
$$ \ra V \otimes V^{\vee}  \otimes H^2 (\hol_{A'}) \ra H^2 ( V \otimes \sF (- D))\ra 0.$$
Taking Heisenberg invariants 
$$ H^0(\sF)^{\sH}  \cong H^1 (\hol_{A'}) , \  H^1(\sF)^{\sH}  \cong \CC , $$
where $\CC$ denotes the trivial representation, and
$$ H^0 ( V \otimes \sF (- D))^{\sH} \cong \CC, \ H^1 ( V \otimes \sF (- D))^{\sH} \cong H^1 (\hol_{A'}) \cong \CC^2, \ 
  H^2 ( V \otimes \sF (- D))^{\sH}=0.$$
 
 We take now the Heisenberg invariants of the cohomology sequence (IV), observing that by Serre duality
 $$ H^0 (End( \sF)) = H^2 (End( \sF))^{\vee}, \ H^0 (End( \sF))^{\sH} \supset \CC.$$
 We get
 $$ H^0 (End( \sF))^{\sH} \cong \CC \cong H^2 (End( \sF))^{\sH}    , \ $$
 $$  0 \ra H^1 (\hol_{A'}) \ra H^1 (End( \sF))^{\sH} \ra H^1 (\hol_{A'}) \ra 0.$$
 Since however $ End( \sF) =  End^0( \sF) \oplus \hol_{A'} $, we infer that 
 $$ H^1 ( End^0( \sF))^{\sH} \cong H^1 (\hol_{A'}), \ \ H^2 ( End^0( \sF))^{\sH}=0.$$
 
 This means that our deformations are unobstructed, with tangent space 
 $H^1 (\hol_{A'})$, which is the tangent space to $Pic^0(A')$.
 
 It is easy then to see that the universal family of deformations is our family $\{ \sF_M\}$.
 
\end{proof}

\begin{proposition}\label{inclusion of F}
Let the bundle $\sF_M$ be as in Proposition \ref{F}, and assume that 
we have a Heisenberg equivariant injective homomorphism
$$\sF_M  \to S^2(\sE')= \hol_{A'}(2 D)\otimes  S^2 ( V^{\vee} ) .$$ 

Then $2M$ is trivial, hence every deformation of $\sF$ as a (Heisenberg invariant) subbundle of  $S^2(\sE')$
is trivial. Moreover,  the homomorphism, for $M$ trivial,  belongs to the two
dimensional vector family 
described above.
\end{proposition}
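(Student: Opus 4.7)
The plan is to apply the contravariant functor $\Hom^{\sH}(-, S^2(\sE'))$ to the defining exact sequence of $\sF_M$ and analyze the induced map between Heisenberg equivariant Hom spaces. From
\[
0 \to \hol_{A'}(-3M) \xrightarrow{\sum_j y_j x_j} V^\vee \otimes \hol_{A'}(D+M) \to \sF_M \to 0
\]
I would extract
\[
0 \to \Hom^{\sH}(\sF_M, S^2(\sE')) \to \Hom^{\sH}(V^\vee \otimes \hol_{A'}(D+M), S^2(\sE')) \xrightarrow{r} \Hom^{\sH}(\hol_{A'}(-3M), S^2(\sE')),
\]
so that a Heisenberg equivariant injective homomorphism $\sF_M \hookrightarrow S^2(\sE')$ is precisely a non-zero element of $\ker r$.

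Next I would compute the two relevant Hom spaces via the representation theory of $\sH_3$. Using the paper's decomposition $S^2(V^\vee) \iso V^{(1)} \oplus V^{(2)}$ together with $V \otimes V \iso 3V^\vee$ (which holds because the centre of $\sH_3$ acts on $V \otimes V$ by $\chi^{-1}$ and $V^\vee$ is the unique three-dimensional irreducible with that central character), the first Hom space identifies with the $\sH_3$-invariants inside $6V^\vee \otimes H^0(\hol_{A'}(D-M))$, and the map $r$ sends an equivariant $\phi$ to the section $\sum_j \phi(y_j) \cdot x_j$ of $S^2(V^\vee) \otimes \hol_{A'}(2D+3M)$. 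Concretely, $r$ is governed by the Heisenberg-equivariant multiplication
\[
H^0(\hol_{A'}(D-M)) \otimes H^0(\hol_{A'}(D+4M)) \to H^0(\hol_{A'}(2D+3M)),
\]
evaluated on the fixed tuple $(x_1,x_2,x_3)$.

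The heart of the argument is to show that $\ker r$ is non-trivial precisely when $2M = 0$ in $\Pic^0(A')$. The reason is that for $r$ to have a kernel compatibly with the $\sH_3$-structure, one needs an $\sH_3$-equivariant identification between the Schr\"odinger representations attached to $\hol_{A'}(D+M)$ and $\hol_{A'}(D-M)$; such an identification of the underlying polarized line bundles forces $\hol_{A'}(D+M) \iso \hol_{A'}(D-M)$ as line bundles, i.e. $2M = 0$. For $M$ trivial, an explicit check identifies $\ker r$ with the two-dimensional $(\lambda, \mu)$-family coming from the two inclusions $V \iso \bigwedge^2 V^\vee \hookrightarrow S^2(V^\vee) = V^{(1)} \oplus V^{(2)}$ described in the construction, proving the last assertion. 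Since $A'[2]$ is discrete and $\Pic^0(A')$ is connected, every (Heisenberg invariant) deformation of $\sF$ as a subbundle of $S^2(\sE')$ must be trivial.

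The hardest step will be the compatibility analysis: one must carefully track the non-canonical $\sH_3$-liftings on the various line bundles $\hol_{A'}(D\pm M)$ and $\hol_{A'}(D + 4M)$, which differ by elements of $\Pic^0(A')$ and each admit an $\sH_3$-lifting unique up to a character of $G$, and then show rigorously that the $\sH$-equivariance of $r$ reduces to the single condition $2M = 0$ rather than to some other torsion condition on $M$.
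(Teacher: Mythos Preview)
Your framework is sound: applying $\Hom^{\sH}(-,S^2(\sE'))$ to the presentation of $\sF_M$ does identify the space of equivariant inclusions with $\ker r$. However, you do not actually compute $\ker r$; you only give a heuristic (``one needs an $\sH_3$-equivariant identification between the Schr\"odinger representations attached to $\hol_{A'}(D+M)$ and $\hol_{A'}(D-M)$'') which is not substantiated and is not obviously the right framing of the condition. As you yourself say, the compatibility analysis with the several non-canonical Heisenberg liftings on $\hol_{A'}(D\pm M)$, $\hol_{A'}(D+4M)$, $\hol_{A'}(2D+3M)$ is the hard part, and it is left undone.

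The paper bypasses this entirely by using the \emph{other} half of the Koszul complex: instead of the presentation of $\sF_M$ as a quotient, it uses the embedding
\[
\sF_M \ \overset{\wedge s}{\hookrightarrow}\ \bigwedge^2 V^\vee \otimes \hol_{A'}(2D+2M)\ \cong\ V\otimes \hol_{A'}(2D+2M),
\]
whose cokernel is $\hol_{A'}(3D+3M)$. Since $H^0(\hol_{A'}(-D-3M))=H^1(\hol_{A'}(-D-3M))=0$, every (necessarily Heisenberg-equivariant) map $\sF_M\to (V\oplus V)\otimes\hol_{A'}(2D)$ extends \emph{uniquely} to $V\otimes\hol_{A'}(2D+2M)$. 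Thus
\[
\Hom^{\sH}\big(\sF_M,\,S^2(\sE')\big)\ \cong\ \Hom^{\sH}\big(V\otimes\hol_{A'}(2D+2M),\,(V\oplus V)\otimes\hol_{A'}(2D)\big)
= \Hom(V,V\oplus V)^{\sH}\otimes H^0(\hol_{A'}(-2M)).
\]
This vanishes unless $2M=0$ (since $-2M\in\Pic^0(A')$), and for $M$ trivial it equals $\CC^2$ by Schur, giving exactly the $(\lambda,\mu)$-family. The condition $2M=0$ falls out of $H^0(-2M)\neq 0$ with no lifting bookkeeping at all; your ``hardest step'' simply does not arise.

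So your approach is not wrong, but it is incomplete at the decisive point, and the missing idea is precisely the Koszul trick of passing from the quotient description of $\sF_M$ to its subbundle description.
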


\begin{proof}
By composition we obtain an equivariant homomorphism 
$$ V^{\vee} \otimes \hol_{A'}(D+M) \ra \sF_M \ra (V \bigoplus V) \otimes \hol_{A'}(2D),$$
equivalently 
$$ V^{\vee} \otimes \hol_{A'}  \ra (V \bigoplus V) \otimes \hol_{A'}(D-M),$$
determined by a homomorphism of representations 
 $$ V^{\vee}   \ra (V \bigoplus V) \otimes V.$$

To determine this we can use our previous Koszul-type arguments, observing that again $x_1, x_2, x_3$ is a regular sequence: this implies that the inclusion of $\sF_M$ factors through
$$ \sF_M \overset{\wedge s}{\hookrightarrow}  \bigg(\bigwedge^2 V^{\vee}\bigg) \otimes \hol_{A'}(2D+2M) \cong V \otimes \hol_{A'}(2D + 2M),$$
(here $s = (x_1, x_2, x_3)$ as usual) and hence our proof follows immediately, since
\begin{equation*}
	Hom ( V \otimes \hol_{A'}(2D+2M ), (V \bigoplus V) \otimes \hol_{A'}(2D))^{\sH}=
	\begin{cases}
		\begin{matrix}
			0 &  {\rm for}  \ 2M \neq 0, \\
			Hom (V, V \bigoplus V) & {\rm for}  \ 2M = 0.
		\end{matrix}
	\end{cases}
\end{equation*}
 
\end{proof}

\begin{thm}
Consider the  four dimensional family of surfaces of general type with 
$ p_g=q=2, K^2=6, d=4, \de=3,$ whose canonical models  are the quotients $X = X' / G$ of surfaces
 $X' \subset \PP^2 \times A'$ with only rational double points as singularities and defined by the following equations
\[
		M=\begin{pmatrix}
			s_1 &  s_3 & s_2 \\
			y_1^2 + \mu   y_2  z_ 3 & y_3^2 + \mu   y_1 y_2 &  y_2^2  + \mu y_1y_3
		\end{pmatrix}
		\]
		
where $\mu \in \CC$, $A'$ is a polarized Abelian surface with a polarization $\hol_{A'}(D)$ 
of type $(1,3)$, and where $\{s_1, s_2, s_3\}$ is a basis of $V = H^0(A', \hol_{A'}(D))$.

Then, this family  (which we call the PP4 family)  yields an irreducible component of the moduli space.

\end{thm}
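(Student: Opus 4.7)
The plan is to show that the PP4 family is irreducible of dimension exactly $4$ and is locally complete at the general member; together these imply that its image in the moduli space is a Zariski-open subset of an irreducible component. The ingredients I would combine are Propositions \ref{smooth}, \ref{F} and \ref{inclusion of F} with the Casnati--Ekedahl structure theorem for Gorenstein coverings of degree $4$.

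I would begin by recording irreducibility and dimension: the parameter space of pairs $((A',D),\mu)$ is irreducible of dimension $3+1=4$, since the moduli of $(1,3)$-polarized abelian surfaces is irreducible of dimension $3$. Proposition \ref{smooth} ensures smoothness of $S'$ for general $\mu$ and general $(A',D)$, and the Casnati--Ekedahl formulas $c_2(\sF)=\delta^2 K_S^2 - 2c_1(\sE')^2 + 4c_2(\sE')$ together with the free $G$-action and the computation of $\al_*(\hol_S)$ yield $p_g=q=2$, $K_S^2=6$, Albanese degree $d=4$ and $\delta=3$; the $\mu$-direction is a genuine modulus because it visibly changes the defining equations (in particular the branch locus).

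The main step is local completeness at a general PP4 surface $S=S'/G$. As in the CHPP analysis, any small deformation $S_t$ of $S$ lifts to a $G$-equivariant deformation $S'_t$ whose Albanese map is generically finite of degree $4$ onto a deformation $A'_t$ of $A'$, and since we are in the Main Stream, $A'_t$ remains a $(1,3)$-polarized abelian surface (contributing $3$ parameters). Applying Casnati--Ekedahl fibrewise, $S'_t$ is cut out inside $\PP(\sE'_t)$ by a Heisenberg equivariant section $\eta_t \in H^0(\sF_t^\vee \otimes S^2(\sE'_t))$ with $\sF_t \hookrightarrow S^2(\sE'_t)$ a rank $2$ subbundle of determinant $3D_t$. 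By Proposition \ref{F} the Heisenberg equivariant deformations of $\sF$ with fixed determinant are parametrized by $M \in \Pic^0(A'_t)$ via the family $\sF_M$, while Proposition \ref{inclusion of F} forces $2M=0$ for any Heisenberg equivariant embedding into $S^2(\sE'_t)$; discreteness of the $2$-torsion subgroup of $\Pic^0$ locks the connected deformation at $M=0$, and the same proposition then identifies the space of such embeddings with a $2$-dimensional vector space spanned by $(i_1,i_2)$. Modding out by the $\CC^*$ that rescales the defining section yields exactly one further parameter $\mu$, so the Kuranishi family near $S$ has dimension at most $3+1=4$, matching the dimension of our family; irreducibility together with this matching gives an irreducible component.

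The step I expect to be most delicate is the fibrewise-to-family upgrade of Propositions \ref{F} and \ref{inclusion of F}: one needs that the Heisenberg equivariance persists along the whole deformation (which follows because $G=\sK(D)$ depends only on $D$ and therefore deforms with $D$), and that no hidden parameters appear through automorphisms of the Casnati--Ekedahl data beyond the $\CC^*$-rescaling of $\eta$. A subsidiary technicality is to exclude, when moving in families, the exceptional polarized-product locus where $|D|$ acquires a fixed part and the construction degenerates; by Proposition \ref{smooth} this is already avoided at the general point, so it does not affect the openness of our family inside the component.
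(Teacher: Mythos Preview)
Your proposal is correct and follows essentially the same route as the paper: both arguments reduce local completeness to Propositions \ref{F} and \ref{inclusion of F}, which pin down the Heisenberg-equivariant pair $(\sF,\sF\hookrightarrow S^2(\sE'))$ up to the one free parameter $\mu$, and then match the resulting $3+1$ parameters against the dimension of the family. The paper's write-up is terser (it simply notes that the generality condition is open, then invokes the two propositions to say ``$\sF$ and the inclusion are determined in an open set containing our family''), while you spell out the discreteness-of-$2$-torsion step and the $\CC^*$-quotient more explicitly; the one point you leave slightly implicit, namely that $\sE'_t$ stays of the form $V^\vee\otimes\hol_{A'_t}(D_t)$, is exactly what the paper covers by its appeal to the open generality condition (Chen--Hacon with $\rank\mathfrak H'=\delta-3=0$).
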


\begin{proof}
First of all, notice that the generality condition (which holds when the Abelian surface $A'$ does not contain an elliptic curve) is an open condition.

Once this is satisfied, by the theorem of Casnati and Ekedahl $X$ is determined by $X'$ 
which in turn is determined by the Heisenberg equivariant inclusion of a subbundle $\sF$ of rank two and with $\det (\sF) = 3 D$ 
inside $ S^2(\sE') =  \hol_{A'}(2 D) \otimes  S^2 ( V^{\vee} )$. 

By Propositions \ref{F} and \ref{inclusion of F}, $\sF$ and the inclusion are determined
in an open set containing our family, and this open set is equal to our family.

\end{proof}

\begin{remark}\label{sF}
1) Proving that $\sF$ is the unique Heisenberg invariant subbundle $\sF$ of rank two and with $\det (\sF) = 3 D$
would show that  the PP4 family is the only component of the Main Stream 
of the moduli space of surfaces of general type with 
$ p_g=q=2, K^2=6, d=4, \de=3.$

2) We shall show that the closure
of our family yields a connected component of the moduli space.
 While it is clear, as in the case $K_S^2=5, d=3$, that a limit of  Tschirnaus bundles of the 
form $V^{\vee} \otimes \hol_{A'}(D)$ is again of this form, we  show now  an analogous statement for the bundle $\sF$.

\end{remark}

In the light of the previous remark, especially in the direction of part 2), we establish some characterization of 
$\sF$ and its  Heisenberg equivariant embeddings in $   S^2 ( V^{\vee})  \otimes  \hol_{A'}(2 D) $.

We assume here that $(A', D)$ is not a polarized product, hence the linear system $|D|$ has no fixed part and no base points.

Set $F : = \sF (-D)$, so that we have an exact sequence 
$$ 0 \ra \hol_{A'}(- D)  \ra  V^{\vee}  \otimes \hol_{A'} \ra F \ra 0.$$
Observe that $c_1(F) = D, \ c_2(F) = D^2 = 6$.

Using a non zero element in $V^{\vee} $, for instance the first element of the canonical basis,  we get a bundle inclusion $\hol_{A'} \ra V^{\vee}  \otimes \hol_{A'}$,
and, by composition, an exact sequence
$$ 0 \ra \hol_{A'} \ra F \ra \sI_Z (D) \ra 0,$$
where  injectivity follows since $\hol_{A'} \cap \hol_{A'}(- D)   =0$, and moreover the induced section
vanishes	 only in the 0-dimensional subscheme $$Z : = \{ s_2=s_3=0\},$$
since
$$ F / \hol_{A'} =  (V^{\vee}  \otimes \hol_{A'} )/ (\hol_{A'} \oplus \hol_{A'}(- D) ) = \hol_{A'}^2 /\hol_{A'}(- D),$$
and  the composed map $ \hol_{A'}^2 \ra F \ra \hol_{A'}( D)$ 	is given by $(s_3, - s_2)$.

Let now $\sF'$ be a vector bundle with the same Chern classes as $\sF$, and again admitting 
a Heisenberg equivariant embedding in $   S^2 ( V^{\vee})  \otimes  \hol_{A'}(2 D) $.

Set then $ F' : = \sF' (-D)$.

Assume now that $F'$ admits a non zero section, leading to an exact sequence
$$ 0 \ra \hol_{A'} (C) \ra F' \ra \sI_Z (D-C) \ra 0,$$
where $Z$ is a zero-dimensional subscheme and $C$ is an effective divisor.

Since $F'$ embeds into $   S^2 ( V^{\vee})  \otimes  \hol_{A'}( D) $, the effective divisor $C$ is	contained
in a divisor in $|D|$.

We observe that $H^0(F') $ is a representation of the Heisenberg group $\sH_3$, and if $h^0(F')\leq 2$,
the representation is a sum of 1-dimensional representations, 
hence there is a Heisenberg invariant extension, and  the  subscheme $Z$ is $\sH_3$-invariant: this implies that $9 $ divides $ |Z|$, but the length $|Z|$ equals  $6 - C (D-C) $ and since $ 6 = D^2 = C^2 + (D-C)^2 +  2  C \cdot (D-C)$, we get $ 3 \leq |Z| \leq 6$, a contradiction.

We can exclude the case $C=D$, since then $H^0(F') $  comes from the subsheaf $\hol_{A'} (C) $
(that is, $H^0(F') = H^0 (\hol_{A'} (C) )$), hence the subsheaf
is unique and the  subscheme $Z$ is $\sH_3$-invariant: this implies that $9 $ divides $ |Z|$,
again a contradiction.

We can assume therefore $h^0(F')\geq 3$, and we take the sections with a minimal curve $C$ of vanishing.

If $ 0 < C < D$, then first of all $ C \cdot (D-C) \geq 1$, since $ C + (D-C)$ is numerically connected
(this follows for instance since  $H^1 (  \hol_{A'}(- D))=0$), and moreover
$ C \cdot (D-C) \geq 2$
 since $ C \cdot (D-C) =1$ (see \cite{cat-fra}, \cite{4names}) implies that its canonical system has a base point, 
 while $|D|$ is base-point free.

Hence, from $ 6 = D^2 = C^2 + (D-C)^2 +  2  C \cdot (D-C)$ follows that either 

\medskip

1)$C \cdot (D-C) = 3$  and  both    $D-C$ and $C$ have self-intersection
zero,  or 

2) $C \cdot (D-C) = 2$ and one of them has self-intersection zero. 

In case 1) we have that one of the systems $|C|$ or $|D-C|$ has dimension $2$, hence 
$|D|$ has a fixed part, a contradiction.

In case 2), one of the systems has dimension $\leq 1$, and the other has dimension $0$.

Since $h^0(F')\geq 3$ and $Z$ is non trivial, it
follows that $dim |C| + dim |D-C| =1 $, and  $Z $ is in the base locus of $|D-C|$, which has therefore dimension $0$ and $(D-C)^2=2$.
Then, $|C|$ consists of curves which are the union of two elliptic curves $E_1, E_2$. But then $E_1 \cdot (D-C) = 1$,
hence $D-C$ maps isomorphically to the elliptic curve $A'/E_1$ and thus $D-C$ consists of two elliptic curves,
of which one is algebraically equivalent to $E_1$. This implies that  $|D|$ has a fixed part, a contradiction.

We have therefore reached the conclusion that, under the assumption of existence of a section, 
  the general section vanishes only on a finite set, and then we have
the desired exact sequence
$$ 0 \ra \hol_{A'}  \ra F' \ra \sI_Z (D) \ra 0,$$
and we know moreover  that $h^0(F') \geq 3$. 

Hence, since $|D|$ has dimension $2$ and does not have $6$ base points,
then $h^0(F') = 3$ and our exact sequence is exact on global sections.

Whence, $Z$ is the complete intersection of 2 sections of $\hol_{A'} (D)$.

Since the base-point scheme of  $H^0(F')$ is Heisenberg invariant and is contained in $Z$, 
then $F'$ is generated by global sections, and by Heisenberg invariance we have
the exact sequence 
$$ 0 \ra \hol_{A'}(- D)  \ra  V^{\vee}  \otimes \hol_{A'} \ra F' \ra 0,$$
showing that $F'$ is isomorphic to $F$.

We summarize our conclusion:

\begin{lemma}\label{F'=F}
Let $(A',D)$ be a polarization of type $(1,3)$ which is not a polarized product and let $\sF'$ be a rank 2 bundle with 
the same Chern classes as $\sF$, with a  Heisenberg equivariant embedding in $   S^2 ( V^{\vee})  \otimes  \hol_{A'}(2 D) $, and moreover with $H^0(\sF' (-D)) \neq 0$. Then $\sF'$ is isomorphic to $\sF$.

The case where  $(A',D)$ is  a polarized product cannot occur.
\end{lemma}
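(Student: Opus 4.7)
The plan is to follow closely the outline sketched in the paragraphs immediately preceding the lemma. First I would set $F':=\sF'(-D)$, so that $c_1(F')=D$, $c_2(F')=6$, and the hypothesis gives a Heisenberg equivariant inclusion $F'\hookrightarrow S^2(V^\vee)\otimes \hol_{A'}(D)$; each of the six summands being isomorphic to $\hol_{A'}(D)$, any section of $F'$ is, via the embedding, a tuple of sections of $\hol_{A'}(D)$, hence vanishes on an effective divisor contained in some member of $|D|$. A non-zero section then produces an exact sequence
$$0 \to \hol_{A'}(C)\to F' \to \sI_Z(D-C)\to 0,$$
with $C$ effective and contained in a member of $|D|$, and $Z$ zero-dimensional.

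The core of the argument is to force $C=0$. I would first rule out $h^0(F')\leq 2$: then $H^0(F')$ splits as a sum of characters of $\sH_3$, so some section is Heisenberg-invariant, making $Z$ Heisenberg-invariant; but $|Z|=(D-C)\cdot(D-C)-\text{(degree contribution)}$ lies between $3$ and $6$, which is not divisible by $9$. The case $C=D$ is excluded the same way, since $H^0(F')=H^0(\hol_{A'}(C))$ forces the subsheaf, and hence $Z$, to be Heisenberg-invariant. For $0<C<D$, I would use $H^1(\hol_{A'}(-D))=0$ to get numerical connectedness $C\cdot(D-C)\geq 1$, and then the fact that $|D|$ is base-point free to upgrade this to $C\cdot(D-C)\geq 2$ (via \cite{cat-fra}, \cite{4names}). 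Combined with $D^2=6=C^2+(D-C)^2+2C\cdot(D-C)$, only the two arithmetic cases $C\cdot(D-C)=3$ (both squares zero) and $C\cdot(D-C)=2$ (one square zero) remain; in each I would trace the induced pencils to conclude that either $|C|$ or $|D-C|$ has positive dimension with base locus forcing $|D|$ to have a fixed part, contradicting the assumption that $(A',D)$ is not a polarized product (recall item 6 of Section 1).

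Having established $C=0$, we have $0\to\hol_{A'}\to F'\to \sI_Z(D)\to 0$ with $h^0(F')\geq 3$. Since $|D|$ is $2$-dimensional and cannot have six common base points, the sequence is exact on global sections and $h^0(F')=3$, with $Z$ the complete intersection of two sections of $\hol_{A'}(D)$. The base locus of $H^0(F')$, being Heisenberg-invariant and contained in $Z$ of cardinality $6$, must be empty (else $9$ would divide a positive integer $\leq 6$), so $F'$ is generated by global sections. Heisenberg equivariance then identifies the evaluation map with $V^\vee\otimes\hol_{A'}\to F'$, producing
$$0\to \hol_{A'}(-D)\to V^\vee\otimes \hol_{A'}\to F'\to 0,$$
which is exactly the defining sequence of $F$, so $F'\cong F$ and therefore $\sF'\cong\sF$. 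The final assertion, that the polarized product case cannot occur, is a by-product of the case analysis above, in which every possible $C$ leading to the desired exact sequence required $|D|$ to be base-point free with no fixed part — a condition that fails precisely in the polarized product case.

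I expect the main obstacle to be the subcase $0<C<D$: ruling out $C\cdot(D-C)\in\{2,3\}$ requires identifying, from the numerical data, the elliptic quotient $A'\to A'/E_i$ through which the decomposition factors, and then carefully checking that in every branch one of the linear systems $|C|$, $|D-C|$ acquires a fixed part that propagates to $|D|$, contradicting the hypothesis.
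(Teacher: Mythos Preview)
Your treatment of the non-product case follows the paper's argument faithfully and is correct. The gap is in the last sentence, where you handle the second assertion (that the polarized product case cannot occur) as a ``by-product'' of the earlier analysis. It is not. Throughout the elimination of $0<C<D$ you repeatedly reached a contradiction of the form ``$|D|$ has a fixed part''; this \emph{is} a contradiction when $(A',D)$ is not a polarized product, but when $(A',D)$ is a polarized product $|D|$ genuinely has a fixed part $E'_2=\{P_1\}\times E_2$, so those steps no longer eliminate anything. In particular the bound $C\cdot(D-C)\geq 2$ (which relied on $|D|$ being base-point free) fails, and the possibilities $C=E'_2$ or $C$ one of the moving elliptic fibres survive. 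Nothing in your argument excludes the existence of an $\sF'$ in that situation.

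The paper treats the product case by a separate argument exploiting the embedding hypothesis directly: every section of $\hol_{A'}(D)$ vanishes on $E'_2$, so no $\hol_{A'}(C)$ with $C$ disjoint from (or transverse to) $E'_2$ can sit inside $S^2(V^\vee)\otimes\hol_{A'}(D)$ with torsion-free cokernel. One then runs the case analysis again: the cases $C\cdot(D-C)=1$ and $C\cdot(D-C)=3$ are eliminated using this observation; the case $C=E'_2$ is killed by noting that a second independent section (remember $h^0(F')\geq 3$) would give a map $\hol_{A'}(E'_2)^{\oplus 2}\to F'$ with nonzero determinant, forcing $D\geq 2E'_2$; and finally the case $C=0$ is excluded because then $\hol_{A'}\hookrightarrow S^2(V^\vee)\otimes\hol_{A'}(D)$ would have torsion-free cokernel, again impossible since every section of $\hol_{A'}(D)$ vanishes along $E'_2$. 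You should add this analysis (or an equivalent one) rather than claim it follows for free.
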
 
 
\begin{proof}
We have given the proof assuming that $|D|$ is base-point free.

But the  only exception is when we have a polarized product of elliptic curves, namely
\begin{equation} \label{pol-prod-(1,3)}
	 (A', D) = (E_1, P_1) \times (E_2, 3 P_2). \tag{$\star \star$}
\end{equation}

In this case however we can run the same proof, in particular we infer as before that $h^0(F') \geq 3$.

We know that  the curves of the linear system $|D|$ 
consist of  a fixed elliptic curve $E'_2$ and three elliptic curves $E_1,E_2,E_3$ which are algebraically equivalent.

Hence, it is possible to have the situation $ C \cdot (D-C) = 1$: this happens 
 iff either $C=E_3$, 
or $D-C = E_3$. Moreover, we can have the situation where $C^2 = (D-C)^2 = 0$,  $C \cdot (D-C) = 3$.

If the first case occurs, since $h^0(F') \geq 3$, we must have that $Z$ is in the base locus of $|D-C|$.

Hence,  the subsheaf 
$\hol_{A'}(C) $ cannot be embedded in $   S^2 ( V^{\vee})  \otimes  \hol_{A'}( D) $.

In the second case, we get a similar contradiction if $|D-C|$ has base points.

There remains only the possibility that $C = E'_2$.

Consider now another section of $F'$: it must vanish on the curve $E'_2$, thus it gives another injective
map of $\hol_{A'}(E'_2) $ in $F'$, which yields another subsheaf, since otherwise $h^0(F') =1$.
The conclusion is that there is a map of $\hol_{A'}(E'_2)^2 \ra F'$ with nontrivial determinant,
hence $ D \geq 2 E'_2$, a contradiction.

Therefore, we have a section vanishing only on a finite number of points.

We find again a contradiction since 
$\hol_{A'} $ cannot be embedded in $   S^2 ( V^{\vee})  \otimes  \hol_{A'}( D) $
with torsion free cokernel: because all sections of 
$ \hol_{A'}( D) $ vanish on $E'_2$.

\end{proof}

\begin{thm}
The family of PP4 surfaces yields a connected component of the moduli space.
\end{thm}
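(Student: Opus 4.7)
The plan is to show that every small deformation of a PP4 surface remains a PP4 surface; this will upgrade the irreducibility already established to the statement that the family constitutes a connected component of the moduli space (cf.\ \cite{bc}). Let $S_t$, $t \in T$, be a smooth one-parameter family with $S_0$ a PP4 surface. Since $p_g, q, K^2$ and $\pi_1$ are deformation invariants, each $S_t$ has $p_g=q=2$, $K^2=6$ and Albanese map of degree $4$ onto an Abelian surface $A_t$; pulling back along the isogeny $\Phi_{D_t}\colon A'_t \to A_t$ yields a $G$-equivariant quadruple cover $S'_t \to A'_t$ whose quotient is $S_t$.

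Next, I would invoke the Casnati-Ekedahl structure theorem to encode $S'_t$ via the Heisenberg-equivariant pair $(\sE'_t, \sF_t \hookrightarrow S^2(\sE'_t))$, whose Chern classes are constant in $t$. An argument parallel to Proposition \ref{F}, applied this time to the Tschirnhaus bundle $\sE' = V^\vee \otimes \hol_{A'}(D)$ instead of $\sF$, shows that every Heisenberg-equivariant deformation of $\sE'$ with fixed determinant is isomorphic to $V^\vee \otimes \hol_{A'}(D + M)$ for some $M \in \Pic^0(A')$; such an $M$ is absorbed by the deformation of the polarized Abelian surface, so one may assume $\sE'_t = V^\vee \otimes \hol_{A'_t}(D_t)$. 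I would then apply Lemma \ref{F'=F} to deduce $\sF_t \iso \sF$: the hypothesis $H^0(\sF_t(-D_t)) \neq 0$ follows from upper semi-continuity combined with $h^0(\sF(-D)) = h^0(V^\vee \otimes \hol_{A'}) = 3$, and the exceptional polarized-product case \eqref{pol-prod-(1,3)} is avoided on an open subset of $T$ since the polarization on $A'_t$ is generically non-product along any deformation of the Main Stream. Finally, Proposition \ref{inclusion of F} identifies the Heisenberg-equivariant embedding $\sF_t \hookrightarrow S^2(\sE'_t)$ as a member of the two-parameter family $\eta_{\lambda,\mu}$ of the construction, so that $S_t$ is itself a PP4 surface.

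The delicate point, as foreshadowed in Remark \ref{sF}, will be the persistence of the hypotheses of Lemma \ref{F'=F} along the deformation: Proposition \ref{F} gives the infinitesimal picture, and the interplay of upper semi-continuity with the openness of the non-product condition on $(A'_t, D_t)$ is what converts this into a global statement along $T$. Once this is in place, the five ingredients above combine to show that the Casnati-Ekedahl data of $S'_t$ is forced into the shape dictated by the construction of PP4 surfaces, completing the proof.
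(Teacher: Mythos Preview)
Your proposal has a genuine directional error that undermines the argument. You set up the one-parameter family with $S_0$ a PP4 surface and aim to show nearby $S_t$ are PP4; but this is precisely the openness statement already proven in the preceding theorem (that the family yields an \emph{irreducible} component). To upgrade to a \emph{connected} component one must instead show closedness: given a family where the general $S_t$ is PP4, the special fibre $S_0$ (a \emph{limit} of PP4 surfaces) is again PP4. The reference \cite{bc} you cite reduces the question to one-parameter families, but the relevant statement there concerns deformations \emph{in the large}, not small deformations.

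This reversal also breaks your upper semi-continuity step: with $t=0$ the known PP4 point, semi-continuity only gives $h^0(\sF_0(-D_0)) \geq h^0(\sF_t(-D_t))$, i.e.\ $3 \geq h^0(\sF_t(-D_t))$, which says nothing about non-vanishing at the deformed fibre. (In the correct limit direction the inequality would indeed yield $h^0(\sF_0(-D_0)) \geq 3 > 0$, so Lemma~\ref{F'=F} applies to the \emph{limit} bundle --- this is how the paper uses it.) Your remark that the polarized-product case is avoided ``on an open subset of $T$'' is likewise insufficient: for closedness one must rule it out at the special fibre, which Lemma~\ref{F'=F} does outright.

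Finally, even after identifying $\sE'_0, \sF_0$ and the embedding $\eta$ in the limit, there remains the substantive issue you do not address: the resulting subscheme $\Sigma_0 \subset \PP^2 \times A'_0$ is a priori only an \emph{extended} PP4 surface, and need not be normal (the paper exhibits non-normal examples). The paper closes this gap by computing $\omega_{\Sigma_0} = \hol_{\PP({\sE'}^{\vee})}(1)$ from the local complete intersection structure, and arguing that if $\Sigma_0$ were non-normal or had worse than rational double points, a resolution $X''$ would satisfy $p_g(X'') < 10$, contradicting $p_g(X'_0) = 10$. This $p_g$-drop argument is the essential new content beyond the irreducible-component theorem, and it is absent from your outline.
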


\begin{proof}
The first part of our argument runs as in the case of CHPP surfaces.

We consider a 1-parameter limit $X'_0$ of PP4 surfaces, and we observe that, by virtue of  Remark \ref{sF} and Lemma \ref{F'=F},
 in the limit the bundles $\sE'$ and $\sF$ are exactly as for the PP4 surfaces; and that moreover
$(A',D)$ is not a product polarization.

The subbundle $\sF$ defines a subscheme $\Sigma \subset \PP^2 \times A'$ whose equations can be 
written   in Hilbert-Burch form, 
$$\Sigma : = \{ (y,z) \ | \ rank (M) \leq 1\}$$
\[
		M=\begin{pmatrix}
			s_1 &  s_3 & s_2 \\
			y_1^2 + \mu   y_2  y_ 3 & y_3^2 + \mu   y_1 y_2 &  y_2^2  + \mu y_1y_3
		\end{pmatrix}
\]
		
		and it would suffice  to see that $\Sigma$ is normal: this  however is not always the case,
		as we shall see later on. Therefore we use another argument.
				
Since $|D|$ has  no base points, $\Sigma$ is the union of the open sets $\sU_j \cap \Sigma,$ where $ 
\  \sU_j : =  \{ s_j \neq 0\}$.

By symmetry, we may analyze the open set   $\sU_1$ where $s_1 \neq 0$, and set 
		$$s: = \frac {s_2}{s_1}, \ s': = \frac {s_3}{s_1},  \ q_j : = y_j^2 + \mu   y_{j+1}  y_ {j+2}$$
(here the indices have to be understood as elements of  $\ZZ/3$).
	
Then, on this open set we have, by the criterion of bordering minors, the complete intersection	
$$ \Sigma \cap  \sU_1 = \{ q_2 - s q_1 = q_3 - s' q_1=0 \}.$$
Hence, $\Sigma$ is Gorenstein, with dualizing (canonical) sheaf  $\omega_{\Sigma} = \hol_{\PP({\sE'}^{\vee})}(1)$,
and thus the vector space of sections of the  canonical system is generated  by the pull-back of the 2-form on $A'$
and by the elements $\{y_i s_j\}, i,j \in \{1,2,3\},$ which are a basis of $V^{\vee} \otimes H^0(  \hol_{A'}( D))$.

Observe that the sub-system generated by $\{y_i s_j\}, i,j \in \{1,2,3\}$, is base point free and factors 
through $ \Sigma \ra \PP^2 \times A' \ra \PP^2 \times \PP^2 $, where the last map is the product of the identity
with $\varphi_D$.

Assume now that $\Sigma$ is not normal, and let $X''$ be a resolution of singularities of $\Sigma$: then 
$ p_g(X'') < 10$, since the sections of $\hol_{X''} (K_{X''})$ correspond to a subspace of
the canonical system of $\Sigma$ contained in the   subspace  of sections vanishing on the singular curve
of $\Sigma$.

This is a contradiction, since $X'_0$ is birational to $X''$ and  $p_g(X'_0) = 10$ (since $\chi( X'_0)=9, q( X'_0)=2$).

A similar contradiction is found if $\Sigma$ is normal but its singularities are not rational double points.

\end{proof}

In order to give a more explicit description of the surfaces in  this connected component of the moduli space, it is 
desirable  to describe 
the singular sets of such surfaces $\Sigma$.

\bigskip

\section{Singular sets of extended PP4 surfaces $\Sigma$}

As in the previous Theorem, we consider an extended PP4 surface 

$$\Sigma : = \{ (y,z) \ | \ rank (M) \leq 1\} \subset \PP^2 \times A'$$

\[
		M=\begin{pmatrix}
			s_1 &  s_2 & s_3 \\
			q_1 &  q_2 & q_3
		\end{pmatrix}.
		\]
		
To avoid cumbersome calculations, we write $$q_j (y) : = y_j^2 + 2m y_{j+1}y_{j-1},$$
observing right away that $ 3 q_j (y)  =\frac{ \partial f_m}  {\partial y_j}$,
where 
$$ f_m (y) = \sum_j y_j^3 + 6 m y_1 y_2 y_3.$$

Hence, $ R_m : = \{ y \ | \ f_m (y) = 0\}$ is a cubic in the Hesse pencil of cubic curves,
which is a smooth cubic or the union of 3 lines: the latter situation occurs precisely 
for $\mu^3 = (2m)^3 = -1$.

The crucial remark is that $\Sigma$ is a birational fibre product
$$ \Sigma = \{ ( y,z) \ | \ q(y) = \varphi_D(z)\} = \PP^2 \times_{ \PP^2 } A',$$
$$\varphi_D(z) = s(z): = (s_1(z),s_2(z),s_3(z)), \ \ q(y) : = (q_1(y),q_2(y), q_3(y)). $$

Observe that $s = \varphi_D$ is always a finite morphism, since we assume that $(A',D)$
is not a polarized product, whereas $q$ is a morphism (hence also finite) if $(2m)^3 \neq -1$.

If instead $(2m)^3 =  -1$, $R_m$ consists of three lines, hence $q$ is a standard birational 
Cremona transformation contracting the three lines and blowing up the three singular points
of $R_m$. 

From this remark and a trivial calculation in local coordinates follows that, defining 
$R_m$ to be the ramification divisor for $q$, $R$ as the ramification divisor for $s$,
the point $(y,z)$ is a smooth point if either $ y \notin R_m$ or $ z \notin R$.

Similarly, $(y,z)$ is a smooth point if  $ y \in R_m,  z \in R$,
but the rank of the derivatives are $rk (Dq_y) =1,$ $rk (Ds_z) =1,$
and the respective  images of $Dq_y, Ds_z$ are not the same tangent line.

A partial conclusion is that, defining 
$B_m$ to be the branch  divisor for $q$, $\sB $ the branch  divisor for $s$,
the singular points lie above the points in the plane in $B_m \cap \sB$ where 
the two curves do not intersect transversally.

It follows then:

\begin{proposition}
An extended PP4 surface $\Sigma $ is normal unless the two curves $\sB$ and $B_m$
have a common component or $(2m)^3 = -1$.
\end{proposition}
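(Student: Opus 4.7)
The plan is to deduce normality from Serre's criterion $R_1 + S_2$, leveraging the fibre-product presentation $\Sigma \cong \PP^2 \times_{\PP^2} A'$ and the smoothness analysis already recorded just before the statement.

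First, I would establish $S_2$. In the proof of the previous theorem it was shown that on the open set $\sU_j := \{s_j \neq 0\}$, the surface $\Sigma \cap \sU_j$ is cut out by two equations of the shape $q_i - (s_i/s_j) q_j = 0$, i.e.\ by a regular sequence in the smooth ambient $\PP^2 \times \sU_j$. As $\{\sU_j\}_{j=1,2,3}$ covers $A'$ (because $|D|$ is base-point-free under the standing assumption that $(A',D)$ is not a polarized product), $\Sigma$ is locally a complete intersection of pure dimension $2$, hence Cohen--Macaulay, hence automatically $S_2$.

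Next, I would verify $R_1$, which under the hypotheses reduces to showing that $\Sing(\Sigma)$ is finite. Assume $(2m)^3 \neq -1$, so $q\colon \PP^2 \to \PP^2$, $y \mapsto (q_1(y),q_2(y),q_3(y))$, is a finite morphism (its base locus, the common zeros of the $q_j$, coincides with $\Sing(R_m)$ and is empty in this case). Since $s = \varphi_D$ is also finite by our standing hypothesis, the projection $\Sigma \to A'$ has $0$-dimensional fibres, so fibres of the composition $\Sigma \to A' \to \PP^2$ are $0$-dimensional too. Now invoke the smoothness criterion derived just above: a point $(y,z) \in \Sigma$ can fail to be smooth only if simultaneously $y \in R_m$, $z \in R$, and the two tangent lines $\im(Dq_y)$, $\im(Ds_z)$ in $T_{q(y)}\PP^2$ coincide; in particular $q(y) = s(z)$ lies in $B_m \cap \sB$. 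Under the hypothesis that $\sB$ and $B_m$ share no irreducible component, $B_m \cap \sB$ is a finite set of points, and its preimage in $\Sigma$ via $\Sigma \to \PP^2$ is therefore finite as well. Thus $\Sing(\Sigma)$ is at most $0$-dimensional, i.e.\ of codimension $\geq 2$, which is precisely $R_1$.

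Combining $R_1$ and $S_2$ through Serre's criterion yields normality of $\Sigma$, completing the proof. The main thing to watch is that both excluded cases genuinely break the argument (so the hypotheses are the right ones): if $(2m)^3 = -1$ then $q$ is a Cremona transformation contracting the three lines of $R_m$, and for any $z \in A'$ with $s(z)$ equal to one of the three base points of $q^{-1}$ the scheme-theoretic fibre in $\Sigma$ becomes $1$-dimensional, destroying pure two-dimensionality; while if $\sB$ and $B_m$ share a component, then $B_m \cap \sB$ acquires a $1$-dimensional piece and its preimage in $\Sigma$ is potentially a curve of singular points, violating $R_1$. No deep new ingredient is needed beyond the smoothness criterion and the Hilbert--Burch complete-intersection presentation already in hand.
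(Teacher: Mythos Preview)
Your argument is correct and follows essentially the same route as the paper: local complete intersection gives $S_2$, and finiteness of $q$ and $s$ together with the non-tangency criterion forces $\Sing(\Sigma)$ to lie over the finite set $B_m\cap\sB$, giving $R_1$. One small inaccuracy in your closing commentary: when $(2m)^3=-1$ the issue is not that pure two-dimensionality fails, but rather that $\Sigma$ acquires extra two-dimensional components $\{y'\}\times A'$ (one for each vertex $y'$ of the triangle $R_m$, where all $q_j$ vanish), making $\Sigma$ reducible; this paragraph is not needed for the proof since the proposition only asserts one implication.
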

\begin{proof}
We have seen that $\Sigma $ is a local complete intersection, whence it is normal
if and only if it is smooth in codimension one; that is, $Sing(\Sigma) $ is a finite set.

If $\sB$ and $B_m$
have no common component, their intersection is a finite set. We use then the fact 
that $s = \varphi_D$ is a finite morphism, and also $q$ is a morphism for  $(2m)^3 \neq -1$,
hence necessarily finite.

\end{proof}

The reason for assuming $(2m)^3 \neq  -1$ is that, 
for $(2m)^3=  -1$, $R_m$ consists of a triangle, and if $y'$ is a vertex of the triangle,
then $q(y')=0$. Thus, it follows that $\Sigma \supset \{y'\} \times A'$,
hence $\Sigma$ has at least  four irreducible components.

For  $(2m)^3 \neq  -1$ the branch locus $B_m$ consists of the dual sextic curve to the cubic
$R_m$, which has equation (see \cite{casnati99}, page 383)
\begin{equation*}
	\begin{split}
		B_m : = \{ x \ \ | \ \ &\sum_j x_j^6 + 2 ( - 16 m^3 - 1) (\sum_{i\neq j} x_i^3  x_j^3 ) \\
		&- 24 m^2  x_1 x_2 x_3 (\sum_j x_j^3 ) + 6m (-8m^3 -4) x_1^2 x_2^2 x_3^2=0\}.
	\end{split}
\end{equation*}

Using \cite{casnati99} we have:

\begin{example}
There exist extended PP4 surfaces which are not normal.

It suffices to take $(A',D)$ a bielliptic Abelian surface of type $(1,3)$ with $B_m$ contained in the branch locus $\sB$.
These exist by \cite{casnati99}.

\end{example}

\bigskip

\section{The PP4 family and the construction of \cite{pe-po4}}
	
	We shall now show  that the 4-dimensional family of PP4 surfaces
	contains  the family of  surfaces described in \cite{pe-po4}.

		Recalling that $\al' : S' \ra A'$ is  in general a finite quadruple cover,
		we have\footnote{In \cite{h-m} $(\sE')^\vee$ is called $\sE'$.}
		$$\al'_\ast \hol_{S'}=\hol_{A'} \oplus (\sE')^\vee= \hol_{A'} \oplus (V \otimes \hol_{A'}(-D)). $$
		The multiplication tensor is given by two tensors 
		$$\tau_0 :  (\sE')^\vee \times (\sE')^\vee \ra  \hol_{A'}, $$
		$$\tau_1 : (\sE')^\vee \times (\sE')^\vee \ra (\sE')^\vee. $$
		As Hahn and Miranda  prove \cite{h-m}, $\tau_0$ is determined by $\tau_1\in H^0(A', S^2(\sE') \otimes (\sE')^\vee)=H^0(A', S^2(V^\vee) \otimes  V \otimes \hol_{A'}(D))= S^2(V^\vee ) \otimes V \otimes V$. 
		
		Indeed, they show that, since $\bigwedge^2\sF\cong \bigwedge^3  \sE' $, $\tau_1$ is in turn determined by a totally decomposable section  (that is, a section which locally is the wedge product of two 
		local sections):
		$$\ga_{\la,\mu} \in H^0(A', \bigwedge^3 (\sE')^\vee \otimes \bigwedge^2 S^2(\sE'))=H^0(A', \bigwedge^3 V \otimes \bigwedge^2 S(V^\vee ) \otimes \hol_{A'}(D))=\bigwedge^3 V  \otimes \bigwedge^2 S^2(V^\vee) \otimes V .$$
		An easy calculation yields
		\[
		\begin{split}
			\ga_{\la,\mu}=(s_1 \wedge s_2 \wedge s_3) \otimes \big
			(& -\la \mu s_2 \otimes y_1^2 \wedge y_1y_2 +  \la \mu s_3 \otimes  y_1^2 \wedge y_1y_3 + \la^2 s_3 \otimes y_1^2 \wedge y_2^2	\\ 
			& + 0 \otimes y_1^2 \wedge y_2y_3   -\la^2 s_2 \otimes y_1^2 \wedge y_3^2 -\mu^2 s_1 \otimes y_1y_2 \wedge y_1y_3 \\
			& - \la \mu s_1 \otimes y_1y_2 \wedge y_2^2 + \mu^2 s_2 \otimes y_1y_2 \wedge y_2 y_3 + 0 \otimes y_1 y_2 \wedge y_3^2 \\
			&+ 0 \otimes y_1 y_3 \wedge y_2^2 - \mu^2 s_3 \otimes y_1 y_3 \wedge y_2 y_3 + \la \mu s_1 \otimes y_1y_3 \wedge y_3^2 \\
			& -\la \mu s_3 \otimes y_2^2 \wedge y_2y_3 + \la^2 s_1 \otimes y_2^2 \wedge y_3^2 -\la \mu s_2 \otimes y_2y_3 \wedge y_3^2\big)
		\end{split}
		\]
		As pointed out in \cite[p. 12]{h-m}, $\ga_{\la, \mu}$ corresponds to the Plucker embedding
		$$\bigwedge^2 \sF \ra \bigwedge^2 S^2(\sE').$$
		Choosing for $S^2(V^\vee)$ the ordered basis 
		\[
		\begin{split}
			\{ &   y_1^2 \wedge y_1y_2, \ y_1^2 \wedge y_1y_3, \ y_1^2 \wedge y_2^2, \ y_1^2 \wedge y_2y_3, \ y_1^2 \wedge y_3^2, \\
			&  y_1y_2 \wedge y_1y_3, \ y_1y_2 \wedge y_2^2, \ y_1y_2 \wedge y_2 y_3, \ y_1 y_2 \wedge y_3^2, \ y_1 y_3 \wedge y_2^2, \\
			& y_1 y_3 \wedge y_2 y_3, \ y_1y_3 \wedge y_3^2, \ y_2^2 \wedge y_2y_3, \ y_2^2 \wedge y_3^2, \ y_2y_3 \wedge y_3^2\}
		\end{split}
		\]
		as in \cite{pe-po4}, under the identification 
		$$X,Y,Z\longleftrightarrow s_1,s_2,s_3, \qquad \hat{X}, \hat{Y}, \hat{Z} \longleftrightarrow y_1,y_2,y_3,$$
		we can write
		\[
		\begin{split}
			\ga_{\la, \mu} = 
			\big( & -\la \mu s_2,\  \la \mu s_3,\  \la^2 s_3, \ 0, \   -\la^2 s_2 ,\\
			&-\mu^2 s_1, \ - \la \mu s_1,  \mu^2 s_2, \ 0, \ 0, \\
			&- \mu^2 s_3, \ \la \mu s_1, \ -\la \mu s_3, \ \la^2 s_1, \ -\la \mu s_2 \big) \in H^0(A', \hol_{A'}(D))^{\oplus 15}.
		\end{split}
		\]
		Then, it is easy to see that  $\gamma_{\la, \mu}$ has the same form provided by Penegini and Polizzi in \cite{pe-po4} Proposition 2.3, setting
		\[
		\begin{matrix}
			a:=-\la \mu, & b:= \la \mu, & c:=\la^2, \\
			d:=0, & 	e:= -\mu^2,
		\end{matrix}
		\] 
		and it fulfills the properties stated in Proposition 2.4.
		
		Therefore, this shows that the family of PP4 surfaces contains the one in \cite{pe-po4}.
		
		\begin{remark}
			Note that in \cite{pe-po4}, in the statement of Proposition 2.3, we have to switch $Y, Z$ since in our case the dual of the Heisenberg representation $V^\vee$ is equivalent to the one given in \cite{pe-po4} via the matrix
			$$A:=
			\begin{pmatrix}
				1 & 0 & 0\\
				0 & 0 & 1\\
				0 & 1 & 0
			\end{pmatrix}. $$
			Furthermore, we point out that the correspondence between the parameters $(\la, \mu)$ and $(a,c)$    giving respectively the family of PP4 surfaces and   the family in \cite{pe-po4}  is as follows
			$$ (\la, \mu) \longmapsto (-\la \mu, \la^2).$$
		\end{remark}

	\subsection{Branch Locus}
	We  compute now  the branch locus of the degree 4 cover
	$$ \alpha'\colon   S' \ra A',$$
	induced by   the second projection map  $\PP(V)\times A' \ra A' $. 
	
	The equations of $S'$ are given by the vanishing of the $2\times 2$ minors of the matrix $M$, which we have seen to equal 	
	\begin{equation} \label{explict_eq_S'}
		\begin{cases}
			F_1=s_3(  y_2^2 + \mu   y_1y_3)- s_2 ( y_3^2  + \mu  y_1y_2     )=0\\
			F_2= s_1(  y_3^2 + \mu   y_1y_2) - s_3 ( y_1^2  + \mu  y_2y_3  )=0 \\
			F_3= s_2 (  y_1^2 + \mu   y_2y_3)- s_1 ( y_2^2  + \mu  y_1y_3   )  =0			
		\end{cases}
		\quad \mu \in \CC
	\end{equation}
	 
	A fixed fibre is singular if and only if it has strictly less than $4$ points. 
	
	We are going to  find the conditions which the coefficients  of the system (\ref{explict_eq_S'}), namely $s_1,s_2,s_3, \mu$, must satisfy in order that this happens.  
	
	Recall that the case
	\[
	s_1=s_2=s_3=0
	\]
	cannot occur.
	
	Then, one of them is nonzero and by symmetry   we may assume that
	\[
	s_1\neq 0.
	\]
	Then, as in Proposition \ref{smooth}, the local equations are 
	
	$$ 
		F_2=0, \ \ 
		F_3=0 .$$
	
	The equations  describe the intersection of two conics in $\PP^2$ and the intersection points are
	exactly the base points of the pencil generated by them.

	Let us denote by  $ A_{s,t}\in Mat(\CC, 3)$ the  $3\times 3$ symmetric matrix of a conic in the pencil, namely
	\begin{equation}
		A_{s,t}=
		\begin{bmatrix}
			s_2t -s_3s & \frac{1}{2}\mu s_1 s & -\frac{1}{2}\mu s_1t\\
			\frac{1}{2}\mu s_1s & -s_1t  &  \frac{1}{2}\mu( s_2t -  s_3s)\\
			-\frac{1}{2}\mu s_1t &  \frac{1}{2}\mu(s_2t - s_3s) & s_1s 
		\end{bmatrix},
	\end{equation}
	and consider its determinant
	$$p(s,t):=\det A_{s,t}\in \CC[s,t]_3.$$
	We point out that the the case $p(s,t)\equiv 0$ does not occur for a general fibre since $S'$ is 
	irreducible, hence
	 the pencil does not have any fixed component.

	 $p(s,t)$ is then a nonzero homogeneous polynomial of degree $3$ whose  roots correspond to the degenerate conics of the pencil. Recall however that the base points of a pencil of conics are less then $4$ if and only if the pencil contains at most 2 degenerate conics, that is $p(s,t)$ has at least one multiple root, equivalently
the  discriminant of $p(s,t)$ vanishes.

	Since we have that
	\begin{equation*}
		\begin{split}
			p(s,t)&= \frac{1}{4}\mu^2(s_3^3-s_1^3)s^3 +\\ 
			&+\bigg(\frac{1}{4}\mu^3 s_1^2s_3 - \frac{3}{4} \mu^2 s_2s_3^2 + s_1^2s_3 \bigg)s^2t+\\
			&+\bigg(-\frac{1}{4}\mu^3 s_1^2s_2 + \frac{3}{4} \mu^2 s_2^2s_3 - s_1^2s_2 \bigg)st^2
			+\\
			&+\frac{1}{4}\mu^2(s_1^3-s_2^3)t^3,
		\end{split}	
	\end{equation*}
	
	and we have the well-known formula saying that  the discriminant of a polynomial 
	$ p = a x^3 + b x^2 + c x + d $ equals
	
	 $$ b^2 c^2 - 4 a c^3 - 4 b^3 d - 27 a^2 d^2 + 18 abcd,$$

	a long but straightforward computation shows that the equation of the
	discriminant  is in our case:
	\begin{equation} 
	\begin{split}
			&s_1^6\bigg[-27\mu^8(s_1^6+s_2^6+s_3^6)+\mu^2\bigg(-4\mu^9 + 6\mu^6 - 192\mu^3
			- 256\bigg)(s_1^3s_2^3+s_1s_3+s_2^3s_3^3)+ \\
			&+\mu^{4}\bigg(18\mu^{6} + 144 \mu^3+ 288\bigg)s_1s_2s_3(s_1^3+s_2^3+s_3^3)+\\
			&+\bigg(\mu^{12} - 92\mu^{9} - 336\mu^{6} + 256\mu^{3} + 256\bigg)s_1^2s_2^2s_3^2\bigg] =0
		\end{split}
	\end{equation}
	
	Since we worked on the open set  $s_1\neq 0$, we finally get the branch locus equation 
	
	\begin{equation} \label{branch_locus_equation}
		\begin{split}
			&-27\mu^8(s_1^6+s_2^6+s_3^6)+\mu^2\bigg(-4\mu^9 + 6\mu^6 - 192\mu^3
			- 256\bigg)(s_1^3s_2^3+s_1s_3+s_2^3s_3^3)+ \\
			&+\mu^{4}\bigg(18\mu^{6} + 144 \mu^3+ 288\bigg)s_1s_2s_3(s_1^3+s_2^3+s_3^3)+\\
			&+\bigg(\mu^{12} - 92\mu^{9} - 336\mu^{6} + 256\mu^{3} + 256\bigg)s_1^2s_2^2s_3^2 =0.
		\end{split}
	\end{equation}

	One easily sees by symmetry that the  cases $s_2\neq 0$, $s_3\neq 0$ lead to exactly the same equation. 
	
	\begin{remark}
		\rm {
			Note that this is exactly the same branch locus as the one found by Penegini and Polizzi in \cite[p. 749, equation (14)]{pe-po4}. In fact,  by multiplying  their equation with $-27$ and setting $c=1$, $a=-\mu$, $X=s_1$, $Y=s_2$, $Z=s_3$, one gets (\ref{branch_locus_equation}).
			
		}
	\end{remark}

\section{Analysis of the case of degree $d=3$ 	under the generality  assumption}
	
We have that $\al : S \ra A$ has degree $d=3$, hence $\sE, \sE'$ have rank equal to $2$ 
and moreover there is a Heisenberg-equivariant exact sequence \eqref{MainSeqGenAss}, namely
\begin{equation*}
	0 \ra \mathfrak H' \ra \sL \otimes V^\vee   \ra  \sE'\ra 0.
\end{equation*}

Since $\sL \otimes V^\vee$ has rank $\de$, $\mathfrak H'$ has rank $\de - 2$, and by definition
it is a succesive extension of line bundles in $Pic^0(A')$.

The main result of \cite{c-e} works without modification under the Gorenstein
Assumption, and it follows that $S'$ is the divisor in $\PP(V) \times A'$ corresponding to 
a section of $ Sym^3(\sE') \otimes (\det(\sE'))^{-1}$.

The total Chern class of $\sE' $ equals  $$ c(\sE') = ( 1 + D)^{\de},$$
and, letting $H$ be the hyperplane divisor of $\PP: = Proj(\sE')$, by the Theorem of Leray-Hirsch
we get
$$ H^2 - c_1(\sE') H + c_2(\sE') =0.$$
 We observe  that the class of $S'$ equals
 $ 3 H - c_1(\sE')$, and $K_{\PP} = - 2 H + c_1(\sE')$, hence $K_{S'} = H | _{S'}$.
 
 Hence $$K_{S'}^2  = H^2 (3 H - c_1(\sE'))= (c_1(\sE') H - c_2(\sE')) (3 H - c_1(\sE')) = $$
 $$ = 3 c_1(\sE') H^2 - c_1(\sE')^2 - 3 c_2(\sE') = 2 c_1(\sE')^2 - 3 c_2(\sE') = 2 \de^2 D^2 - \frac{3}{2} \de (\de-1) D^2=$$
 $$ = \de^2 ( 4 \de - 3 (\de - 1)) = \de^2 (\de + 3).$$
 Since the degree of $S' \ra S$ equals $|G| = \de^2$, we have shown the first assertion of the following
 \begin{proposition}\label{K^2}
 $K^2_S = \de+3$, and $\chi(S) = 1$.
 \end{proposition}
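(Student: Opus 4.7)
The first assertion is essentially derived in the lines preceding the statement; the plan is to justify the one implicit ingredient, namely the Chern polynomial $c(\sE')=(1+D)^\delta$. This follows by multiplicativity of the total Chern class applied to the Heisenberg-equivariant exact sequence $0\to\mathfrak H'\to\sL\otimes V^\vee\to\sE'\to 0$ of the Generality Assumption \ref{GenAss}, combined with the vanishing $c(\mathfrak H')=1$: indeed $\mathfrak H'$ is by construction a successive extension of line bundles in $\Pic^0(A')$, each of which has vanishing first Chern class in rational cohomology. Substituting $c_1(\sE')^2=\delta^2 D^2$, $c_2(\sE')=\binom{\delta}{2}D^2$, and $D^2=2\delta$ (since $D$ has Pfaffian $\delta$) into the formula $K_{S'}^2=2c_1(\sE')^2-3c_2(\sE')$ displayed in the text, we obtain $K_{S'}^2=\delta^2(\delta+3)$. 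Since $G=\sK(D)$ has order $\delta^2$ and acts freely on $S'$ with quotient $S$, we conclude $K_S^2=K_{S'}^2/|G|=\delta+3$.

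The second assertion $\chi(\hol_S)=1$ requires no separate argument: part (I) of the Gorenstein Assumption \ref{GorAss}, inherited verbatim by the Generality Assumption \ref{GenAss}, stipulates that $S$ is a minimal surface of general type with $q(S)=p_g(S)$, and therefore $\chi(\hol_S)=1-q(S)+p_g(S)=1$. Accordingly there is no genuine obstacle in the proof of Proposition \ref{K^2}; the real content lies in the first equality, and its only non-routine step is recognising that the homogeneous bundle $\mathfrak H'$ has trivial total Chern class, which is a standard consequence of its being built from degree-zero line bundles on the Abelian surface $A'$. All remaining manipulations are the binomial-type Chern class computation and the free-quotient division by $\delta^2$ already carried out above.
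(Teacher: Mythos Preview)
Your treatment of the first assertion is correct and matches the paper's approach exactly; you have simply made explicit the one point the text leaves implicit, namely that $c(\mathfrak H')=1$ because $\mathfrak H'$ is an iterated extension of degree-zero line bundles on $A'$.

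Your argument for $\chi(S)=1$ is also correct, but it differs from what the paper does. You observe, quite legitimately, that the equality $p_g(S)=q(S)$ is built into hypothesis (I) of the Gorenstein Assumption (hence of the Generality Assumption), so $\chi(S)=1$ is tautological. The paper instead ignores this and computes $\chi(S')$ directly from the geometry: using the exact sequence
\[
0 \ra \hol_{\PP}(K_{\PP}) \ra \hol_{\PP}(H) \ra \hol_{S'}(K_{S'}) \ra 0
\]
on $\PP=\PP((\sE')^{\vee})$, together with $\chi(\hol_{\PP}(K_{\PP}))=0$ and $\chi(\hol_{\PP}(H))=\chi(\sE')=\de\,\chi(\sL)-\chi(\mathfrak H')=\de^2$, one obtains $\chi(S')=\de^2$ and hence $\chi(S)=1$ after dividing by $|G|$. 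Your route is shorter; the paper's route has the merit of being an internal consistency check on the construction (it verifies that the divisor cut out in the projective bundle really has the Euler characteristic it should), and the intermediate cohomology computations of $h^i(\hol_{\PP}(K_{\PP}))$ and $h^i(\sE')$ are exactly what is needed for the subsequent Remark~\ref{q=2} determining when $q(S')=2$.
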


 \begin{proof}
 There remains to show the second assertion.
 
 It suffices to  show that  $\chi(S') = \de^2$. This follows from the exact sequence 
 $$ 0 \ra  \hol_{\PP}  ( K_{\PP} ) = \hol_{\PP} ( - 2 H + c_1(\sE') ) \ra \hol_{\PP} (H) \ra \hol_{S'} ( K_{S'} ) \ra 0.$$
 
 We have  $\chi (\hol_{\PP}  ( K_{\PP} ) ) = -  \chi (\hol_{\PP} )= 0$,
 more precisely, $h^0 (\hol_{\PP}  ( K_{\PP} )) =0$,  $h^1 (\hol_{\PP}  ( K_{\PP} ) ) = h^2 (\hol_{\PP} ) = 1$,  $h^2 (\hol_{\PP}  ( K_{\PP} )) =h^1 (\hol_{\PP}) = 2$, $h^3 (\hol_{\PP}  ( K_{\PP} )) =1$.
 
 On the other hand, $h^i(\hol_{\PP} (H) ) = h^i( \sE')$ yields    
 $$ \chi (S')  = \chi (\hol_{S'} ( K_{S'} ))=  \chi (\sE') = \de \chi (\sL )  - \chi (\mathfrak H') = \de^2- 0 = \de^2,$$
  as we wanted to show.

 \end{proof}
 
 \begin{remark}\label{q=2}
 In the above proposition we certainly have $q(S')=2$ provided that $ h^1( \sE')=0, h^2( \sE')=0$,
 and this follows if $h^i(\mathfrak H')=0$ $\forall i$.
 
 \end{remark}

  The case $\de=2$ is the case of CHPP surfaces, that we have already described in detail, so let us proceed
  to the next case $\de=3$.
  
Here, in the  Heisenberg-equivariant exact sequence
\begin{equation} \label{MainSeqGenAss-de=d=3}
	0 \ra \mathfrak H' \ra \sL \otimes V^\vee   \ra  \sE' \ra 0 \tag{$\bullet \bullet$}
\end{equation}

$ \mathfrak H'$ is a line bundle in $Pic^0(A')$.

\subsection{The case where the homogeneous bundle is trivial} \label{triv-hom-bdl}

The first guess here is to take $ \mathfrak H' = \hol_{A'}$, so that 
the inclusion $j : \hol_{A'}  \ra \sL \otimes V^\vee$ is given by a Heisenberg invariant
section of $H^0 ( \sL \otimes V^\vee) = V  \otimes V^\vee$.

Since $V$ is an irreducible representation, the only invariant by Schur's lemma corresponds to
the identity of $V$, hence to the element $ \sum_j x_j y_j$, where $x_1,x_2, x_3$ is the natural basis
of $V$ and  $y_1,y_2, y_3$ is the dual basis.

In order to get a section of $ Sym^3(\sE') \otimes (\det(\sE'))^{-1}$ we use the surjection
$$ Sym^3 ( V^\vee) \otimes \hol_{A'} \ra  Sym^3(\sE') \otimes (\det(\sE'))^{-1},$$
hence the surjection $ Sym^3 ( V^\vee)  \ra  H^0 (Sym^3(\sE') \otimes (\det(\sE'))^{-1}),$
and we consider a cubic form $F(y) \in Sym^3 ( V^\vee)$.

Then, $F(y) = 0$ defines a cubic curve $C \subset \PP^2 = \PP( V)$, and 
$$ S' : = \{ (y,z) \in \PP( V) \times A' \ | \ \sum_j y_j x_j(z) = 0 , \ F(y)=0\}.$$

  $S'$ is an ample divisor inside the Abelian variety $Z: = C \times A'$, hence, by Lefschetz theorem, $ q(S')=3$;
  moreover the class of $S'$ is $H + D$, so $K_{S'}^2  =   (H+D)^3 = 3 H D^2 = 3 \cdot 2 \de^2 = 6 \cdot 9$,
  hence this calculation shows once more that $S$ has $K^2_S= 6$, while the exact sequence
  $$ 0 \ra  \hol_{Z}    \ra \hol_{Z} (H + D ) \ra \hol_{S'} ( K_{S'} ) \ra 0$$
  shows that $\chi(S') = \chi (\hol_{Z} (H + D )) = 9$, hence $\chi(S)=1$.
  
  We have taken for granted that $S'$ is   $G$-invariant and smooth, let's now show it.
  
  First of all, $C$ must be $G$-invariant, and since $G$ is generated by a cyclical permutation
  $ g_1 : y_1 \mapsto y_3 \mapsto y_2 \mapsto y_1$, follows that $F$ is a linear combination of
  $$\sum_j y_j^3, \quad  y_1y_2y_3, \quad  \sum_i y_i^2 y_{i+1}, \quad  \sum_i y_i^2 y_{i-1}.$$
  Since the other generator $g_2$ of $G$ acts via the diagonal matrix with entries $ 1, \e^2, \e$,
  the above monomials are eigenvectors for respective eigenvalues $ 1, 1, \e^2, \e$,
  so that either
  \begin{enumerate}
  	\item $ F =  \sum y_j^3 +\la  y_1y_2y_3$, or
  	
  	\item $F=  \sum_i y_i^2 y_{i+1}$, or
  	
  	\item $F=  \sum_i y_i^2 y_{i-1}$.
  \end{enumerate}

   The third case is projectively equivalent to the second, via the projectivity 
   $\iota_2$ which exchanges the coordinates $y_2, y_3$, but the isomorphism does not preserve the action of $G$. 
   
   However, see \cite{la-se}, we can use the involution $\iota$ coming from the extended Heisenberg
   group, and such that $ \iota (x_1, x_2, x_3) = (x_1, x_3, x_2) $: it is associated to an automorphism $\iota$ of $A'$
   which equals to multiplication by $-1$ for a suitable choice of the origin.
   
   Now, the isomorphism $ \iota \times \iota_2$ normalizes the action of the group $G$
   (sending each element of $G$ to its inverse), and leaves the equation
   $ \sum_i x_i y_i=0$ invariant.
   
   Hence, in the sequel we shall restrict ourselves to consider only the first and second case.

   In the first case $C$ is a curve of the Hesse pencil of cubics, hence it is either smooth or the product of three linear forms,
    in the second case $C$ is smooth, since the three polynomials 
 	\[
 	y_1^2 + 2y_2y_3, \quad y_2^2 + 2y_1y_3, \quad y_3^2 + 2y_1y_2
 	\]
 	cannot vanish simultaneously (one observes that $y_j \neq 0$ for all $j$, then $y_1=1$
	implies $|y_2| = |y_3|=2$, hence $1 = 8$, a contradiction).  
    
	\bigskip 
	
	\bigskip
	
    \subsubsection{A family of surfaces with $p_g=q=3, \ K^2=6, \ d=3$} \label{Hesse-cubic}
    
    In the first case, we have a linear system on $\PP({\sE'}^{\vee})$, which has as base-point set the  intersection
    of $\PP({\sE'}^{\vee})$ with $\sB \times A'$, where $\sB$ is the base-point set of the Hesse pencil,
    consisting of the $9$-point orbit via the symmetric group $\mathfrak S_3$
    of the points  $(0,1,-\e^j)$, where $\e$ is  a primitive cubic root of $1$.
    
 By the first Bertini's theorem it suffices to show smoothness at these points, and by cyclic symmetry it suffices to
 look at the points  $(0,1,-\e^j) : = (0,1,\zeta)$.
 
 Notice that we have a smooth point of $S'$ if the gradients of the two equations
 are not proportional in $(y,z)$. This certainly happens if $z$ is a smooth point of the curve $D_y : = \{ z \ | \ \sum_j y_j x_j(z)=0\}$.

 Now, for a general $A'$, the curve $D_{y}$ is always irreducible, hence it has only a finite number of singular points.
 Making $y$ vary in the finite set of the $9$ base points of the Hesse pencil, 
 we only get a finite set of points in the plane $\PP^2$ with coordinates $(x_1, x_2, x_3)$,
 image of $A'$ under the finite morphism $\varphi_D$ of degree $6$ associated to $H^0(A', \hol_{A'}(D))$. 
 
 These points must then satisfy the equation $ x_2 + \zeta x_3=0$, coming from the equation $\sum_j x_j y_j=0$.
 Moreover, the condition that the gradients are proportional means that the rank of the following matrix 
 \[
 N=\begin{pmatrix}
 	 \la \zeta&  3 & 3 \zeta^2 \\
 	x_1  &  x_2  &  x_3
 \end{pmatrix}
 \]
 is at most $1$.
 This leads to further equations
 $$ 3 x_1 - \la \zeta x_2 =  x_3 - \zeta^2 x_2 = - \la x_3 + 3 \zeta x_1 = 0.$$
 We may set  $x_1 = 1$, hence we get 
 $$ x_2 = \frac{3}{\la \zeta} , \quad  x_3 =  \frac{3 \zeta }{\la}.$$

	Therefore,  the point $(x_1,x_2,x_3):=(\la \zeta^2, 3 \zeta, -3)$ is  the only   solution in the plane.
	
	Then, $z$ must be a singular point for the curve $D_{(0,1,\zeta)}$, and belong to the preimage
	  of $(\la \zeta^2, 3 \zeta, -3)\in \PP^2$ via the degree $6$ morphism
	\[
	\varphi_{D} \colon A' \ra \PP^2
	\]
	associated to $H^0(A', \hol_{A'}(D))$. 
		
	This cannot happen for a general pair $(A',D)$ since then the  divisor 
	
	\[
	D': \quad \{ x_2(z) + \zeta x_3(z)=0\}
	\]
	
	 is  smooth.

     The action of $G$ on a curve of the Hesse pencil is free, since $g_1$ has as fixed points only the three points
     $(1, \e^j, \e^{2j})$, $j=0,1,2$,  while $g_2$ 
     has as fixed points only the coordinate  points, and all these points do not belong to the general cubic $C$.
     
     The conclusion is that $G$ acts by translation via the nine  $3$-torsion points of $C$,
     hence  $q(S)=3$.
    
      \bigskip

     \subsubsection{A new component, consisting of surfaces with $p_g=q=2$, $K_S^2=6$, $d=3$.} \label{new-family}

     The main point to establish, in the case of the curve
     $$ C : = \{ f(y) : = \sum_i y_i^2 y_{i+1} =0\},$$
     is that the surface $S'$ is again smooth (or has only rational double points as singularities).

     This is done in Theorem 0.2 of \cite{ca-se}.

      In   this case  the action is not  free on $C$, since the	 coordinate points belong to $C$ 
      and are fixed for $g_2$. Thus, the quotient surface $S = S' /G$ has $q(S)=2$ and $\chi(S)=1$,
      as desired.
      
      Let us therefore discuss the singularities of $S'$, which has equations

     $$ S' : = \{ (y,z) \in \PP( V) \times A' \ | \ \sum_j y_j x_j(z) = 0 , \sum_i y_i^2 y_{i+1} =0\}.$$
 	
 	We have	 already shown that $C$ is smooth.

 	$(y,z)$ is  a singular point of $S'$ if and only if  $z$ is  a singular point of the curve $D_y:=\{z \ |  \ \sum_j y_j x_j(z)=0\}$ and the rows of the matrix
 	\begin{equation}
 		\begin{pmatrix}
 			y_3^2 + 2y_1y_2 & y_1^2 + 2y_2y_3 &  y_2^2 + 2y_1y_3  \\
 			x_1 & x_2& x_3
 		\end{pmatrix}
 	\end{equation}
	are  proportional. This means  that
	\begin{equation}
		x: = (x_1, x_2,x_3)= \nabla f (y), \quad  y: = (y_1, y_2, y_3),
	\end{equation}
	
	and 	we view $x$ as a point of $(\PP^2)^{\vee}= : \PP'$, while $ y \in \PP : = \PP^2$.
	
Geometrically, this means that $ x \in C^{\vee}$, and $x$ represents a tangent line to $C$ at $y$,
hence $y$ represents a line $\Lam_y$ tangent to $C^{\vee}$ at $x$.

Moreover, since $z$ is a singular point of $D_y$, which is the inverse image under $\varphi_D$
of the line $\Lam_y$ corresponding to $y$, we require  that the	 line $\Lam_y$ is tangent at $x$
to the branch curve $\sB$ of $\varphi_D$. Hence, that $\sB$ and $C^{\vee}$ are tangent.

Therefore, $S'$ is smooth if $\sB$ and $C^{\vee}$ intersect transversally.

The following is the content of Theorem 0.2 of \cite{ca-se}:
\begin{thm}\label{branchconj}
Let $\sB$ be the branch curve of $\varphi_D : A' \ra \PP^2$, where $D$ is a polarization of type $(1,3)$
and the pair $(A',D)$ is general.

Then, $C$ being the plane curve $ C : = \{  \sum_i y_i^2 y_{i+1} =0\}$, 
$\sB$   intersects transversally the dual sextic curve $C^{\vee}$ and $C$ intersects transverssally the discriminant
curve $W$ of the linear system $|D|$.

\end{thm}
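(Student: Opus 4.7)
\emph{Plan.} The proof proceeds by openness of the transversality conditions combined with explicit verification on a single pair $(A',D)$. Since the plane cubic $C \subset \PP(V)$ and its dual sextic $C^\vee \subset \PP(V^\vee)$ are fixed independent of $(A',D)$, while $\sB$ and $W$ vary algebraically with the universal $(1,3)$-polarized abelian surface (after fixing a compatible Heisenberg structure, which is canonical up to automorphisms), the loci ``$\sB$ not transverse to $C^\vee$'' and ``$C$ not transverse to $W$'' are Zariski-closed in the $3$-dimensional moduli space of such $(A',D)$. It therefore suffices to exhibit one pair $(A',D)$ for which both intersections are reduced and transverse.

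\emph{Numerical preparation and symmetry reduction.} The expected intersection numbers are $\deg \sB \cdot \deg C^\vee = 6\cdot 6 = 36$ and $\deg C \cdot \deg W = 3\cdot 18 = 54$. Here $\deg \sB = 6$ follows from $R \sim 3D$, $\deg \varphi_D = 6$ and $\varphi_D^\ast \sB = 2R$, giving $6 \deg \sB = 2R \cdot D = 6 D^2 = 36$; and $\deg W = 18$ comes from Zeuthen--Segre applied to a Lefschetz pencil in $|D|$ (blow-up of $A'$ at the $6$ base points has Euler characteristic $6$, the generic fibre has Euler characteristic $-6$, giving $18$ nodal fibres). Moreover $C$, $C^\vee$, $\sB$, $W$ are all Heisenberg-invariant, hence $G$-invariant, so any failure of transversality occurs along a full $G$-orbit of $9$ points (or a proper sub-orbit supported on the fixed locus of some element of $G$). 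This cuts the problem down to checking a bounded list of $G$-orbit representatives.

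\emph{Example and main obstacle.} To exhibit the required $(A',D)$, my approach is to choose a Heisenberg-adapted abelian surface whose extra automorphisms align with the symmetries of $C$: for instance an isogeny quotient of a product of elliptic curves with complex multiplication by $\ZZ[\zeta_3]$, chosen non-split so that $(A',D)$ is not a polarized product (ensuring $|D|$ is base-point-free and $\varphi_D$ is finite of degree $6$). One writes $s_1, s_2, s_3$ as explicit theta functions with characteristic, computes the Jacobian determinant of $\varphi_D$ to extract $R$, pushes forward to obtain $\sB$ in explicit form, and verifies by resultants (or primary decomposition of the intersection ideal) that $\sB \cap C^\vee$ consists of $36$ reduced points; analogously one writes $W$ as the discriminant of the family $\{D_y\}_{y\in \PP(V)}$ and checks that $C \cap W$ consists of $54$ reduced points. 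The main obstacle is this final explicit verification: the polynomial degrees are large and the theta-function coordinates unwieldy, but the $G$-orbit reduction keeps the number of local tangency checks bounded, and a computer-algebra script seems the most robust way to close the argument.
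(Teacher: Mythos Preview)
The paper does not prove this statement; it records it as Theorem~0.2 of \cite{ca-se} and defers entirely to that reference. So there is no in-paper argument to compare against, only the external one.

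Your overall strategy---openness of the two transversality conditions over the moduli of $(1,3)$-polarized abelian surfaces, followed by verification on a single pair $(A',D)$---is sound in principle, and the $G$-symmetry reduction is a sensible device. But as written this is a plan, not a proof: no explicit $(A',D)$ is actually produced, and the decisive tangency checks are deferred to an unwritten computer-algebra script. The sentence ``a computer-algebra script seems the most robust way to close the argument'' is an acknowledgement that the key step has not been carried out. Note also that a CM quotient of a product of elliptic curves is a very special point of moduli; you would need to argue separately that it does not accidentally sit in the (possibly positive-dimensional) non-transversality locus.

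There is also a numerical error. For a degree-$6$ cover with generically simple ramification one has $\varphi_D^\ast \sB = 2R + R'$, where $R'$ is the residual unramified part over $\sB$ (of degree $4$ over $\sB$); the equality $\varphi_D^\ast \sB = 2R$ holds only for double covers. The correct degree is obtained by noting that $R \to \sB$ is generically one-to-one, so $\deg \sB = R \cdot D = 3D \cdot D = 18$ (this is the classical value, see for instance \cite{la-se}). Hence $\sB \cdot C^\vee = 18 \cdot 6 = 108$, not $36$, and after dividing by $|G| = 9$ there are $12$ orbit-representatives to inspect rather than $4$. This does not invalidate the strategy, but it triples the workload of the explicit check you are postponing.
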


     \bigskip

    \subsection{The case where the homogeneous bundle is nontrivial} \label{nontriv-hom-bdl}
    
    The next option  is to take  $ \mathfrak H' =\sM=\hol_{A'}(M)$, a nontrivial line bundle in $Pic^0(A')$.
    Observe  that the inclusion $ \sM \hookrightarrow \sL \otimes V^{\vee}$ comes from a section  
    $$\xi \in H^0(   \sL(-M)  \otimes V^{\vee}),$$
    and there is a point $ z \in A'$ such that, if $t_z$ denotes the translation by $z$, 
    $$\xi \in t_z^* H^0(   \sL ) \otimes V^{\vee}.$$

    Then, from the exact sequence \eqref{MainSeqGenAss-de=d=3},  which in this case reads out as
    \begin{equation*}
    	0 \ra \hol_{A'}(M) \ra \sL \otimes V^\vee   \ra  \sE' \ra 0,
    \end{equation*}

     we get the following  Eagon-Northcott exact sequence for $Sym^3(\sE')$:
    
    \begin{equation} \label{eag-north}
    	\xymatrixcolsep{1pc}
    	\xymatrix{	
    		0 \ar[r] & Sym^2(V^\vee)\otimes \hol_{A'}(2D+M) \ar[r]  &  Sym^3(V^\vee)\otimes \hol_{A'}(3D)  \ar[r]& Sym^3(\sE') \ar[r] & 0.
    	} \tag{$+$}
    \end{equation}
    
    By tensoring with $\det(\sE')^{-1}=\hol_{A'}(M-3D)$ we get:

    \begin{equation} \label{twisted-eag-north}
    	\xymatrixcolsep{1pc}
    	\xymatrix{	
    0 \ar[r] & Sym^2(V^\vee)\otimes \hol_{A'}(-D+2M) \ar[r]  &  Sym^3(V^\vee)\otimes \hol_{A'}(M)  \ar[r]& Sym^3(\sE')\otimes \det(\sE')^{-1} \ar[r] & 0.
	} \tag{$++$}
    \end{equation}

	Since $\sM\in Pic^0(A')$ is nontrivial, we know that $H^i(M)=0$ for all $i$,  and therefore

	$$ (i) \qquad H^0( Sym^3(V^\vee)\otimes \hol_{A'}(M))=0.$$
	
	Furthermore, since $D$ is ample, we have, by Kodaira vanishing, that 
	
	$$ (ii) \qquad H^1(Sym^2(V^\vee)\otimes \hol_{A'}(-D+2M))=0.$$
	
	Finally, relations $(i)$ and $(ii)$, and  the long exact cohomology sequence   associated to \eqref{twisted-eag-north}, imply that
	
	\[
	H^0(Sym^3(\sE')\otimes \det(\sE')^{-1})=0.
	\]
	
	The conclusion  is then the following
	
	\begin{thm}
		The case $\delta=d=3$, $p_g=q=2$  occurs  under the Generality Assumption \ref{GenAss} exactly 
		 for the AC3 family.
		
		That is, all  minimal surfaces $S$ of general type with $p_g=q=2$, $K^2=6$, with Albanese map of degree $3$ and satisfying the Generality Assumption belong to the  family described in  Subsection \ref{new-family}.

		 Moreover, the only other minimal surfaces $S$ of general type with $p_g=q$, $K^2=6$, with $\al :S  \ra A$ a surjective morphism
		 of degree $d=3$ onto an Abelian surface, and satisfying the Generality Assumption,
		 are the surfaces with $p_g=q=3$ described in Subsection \ref{Hesse-cubic}.
	\end{thm}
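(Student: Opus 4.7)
The plan is to combine the Casnati--Ekedahl structure theorem in degree $3$ with the two exhaustive cases for the homogeneous bundle $\mathfrak{H}'$, which is a line bundle in $\Pic^0(A')$ (of rank $\delta-2=1$) appearing in the Heisenberg-equivariant sequence \eqref{MainSeqGenAss-de=d=3}. By the Gorenstein Assumption built into \ref{GenAss} and the Casnati--Ekedahl theorem for degree $d=3$ covers, $S'$ is cut out in $\PP(V)\times A'$ by a Heisenberg-equivariant section of $S^3(\sE')\otimes \det(\sE')^{-1}$, so the problem reduces to enumerating such sections for each possible $\mathfrak H'$.

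\textbf{Case 1: $\mathfrak H' \cong \hol_{A'}$.} The inclusion $\hol_{A'}\hookrightarrow \sL\otimes V^\vee$ is Heisenberg-invariant, so by Schur's lemma (applied to the irreducible Schr\"odinger representation $V$) it is, up to scalar, the canonical section $\sum_j x_jy_j$. Using the induced surjection $S^3(V^\vee)\otimes \hol_{A'}\twoheadrightarrow S^3(\sE')\otimes \det(\sE')^{-1}$, the section defining $S'$ corresponds to a $G$-invariant cubic form $F(y)\in S^3(V^\vee)$. Decomposing $S^3(V^\vee)$ into eigenspaces of the diagonal generator $g_2=\diag(1,\e^2,\e)$ and imposing cyclic invariance under $g_1$, the invariants are spanned by the four monomials $\sum y_j^3$, $y_1y_2y_3$, $\sum_i y_i^2 y_{i+1}$, $\sum_i y_i^2y_{i-1}$; the last two are swapped by the extended Heisenberg involution $\iota\times\iota_2$ which normalizes the $G$-action and preserves $\sum_j x_jy_j$. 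Hence, up to this normalizer, $F$ is either a member of the Hesse pencil $\sum y_j^3+\lambda y_1y_2y_3$ (yielding the $p_g=q=3$ surfaces of Subsection \ref{Hesse-cubic}, since $G$ acts freely by translations through $3$-torsion on the general smooth cubic $C$) or $\sum_i y_i^2y_{i+1}$ (yielding the AC3 family of Subsection \ref{new-family}, since on this smooth $C$ the action of $g_2$ has the coordinate points as fixed points, so $q=2$).

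\textbf{Case 2: $\mathfrak H' = \sM=\hol_{A'}(M)$ with $M\in\Pic^0(A')$ nontrivial.} The Eagon--Northcott complex associated to \eqref{MainSeqGenAss-de=d=3} gives, after twisting by $\det(\sE')^{-1}=\hol_{A'}(M-3D)$, the short exact sequence \eqref{twisted-eag-north}. The middle term $S^3(V^\vee)\otimes \hol_{A'}(M)$ has $H^0=0$ because $H^i(M)=0$ for all $i$ (nontrivial element of $\Pic^0$), while $H^1$ of the left term $S^2(V^\vee)\otimes\hol_{A'}(-D+2M)$ vanishes by Kodaira (since $D$ is ample and a small twist by a topologically trivial line bundle preserves the vanishing). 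The long exact sequence then forces $H^0(S^3(\sE')\otimes\det(\sE')^{-1})=0$, so no such $S'$ exists.

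Combining the two cases, together with $K_S^2=\delta+3=6$ and $\chi(S)=1$ from Proposition \ref{K^2} (and Remark \ref{q=2} for $q(S')=2$), yields all assertions of the theorem. The only genuinely nontrivial input beyond this enumeration is that in the AC3 case $S'$ must actually be smooth (or have only rational double points) for generic $(A',D)$, which is exactly the content of Theorem \ref{branchconj} of \cite{ca-se}; this is the main technical obstacle and is the reason the proof must be split between the purely representation-theoretic enumeration (straightforward) and the geometric verification of smoothness (delegated to \cite{ca-se}).
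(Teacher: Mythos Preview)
Your proposal is correct and follows essentially the same route as the paper: the argument is precisely the dichotomy of Section~8 into the cases $\mathfrak H'\cong\hol_{A'}$ (Subsection~\ref{triv-hom-bdl}, where the Schur/eigenspace enumeration of $G$-invariant cubics gives the Hesse pencil and the AC3 curve) and $\mathfrak H'$ nontrivial (Subsection~\ref{nontriv-hom-bdl}, where the twisted Eagon--Northcott sequence \eqref{twisted-eag-north} together with $H^i(M)=0$ and Kodaira vanishing kills $H^0(S^3(\sE')\otimes\det(\sE')^{-1})$). Your identification of Theorem~\ref{branchconj} as the one nontrivial external input is also exactly how the paper structures the argument.
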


   \bigskip

 \bigskip

\bigskip

\section{The case $d=4$  under the generality  assumption:  an example with  $d=\de=4$ and
 nonzero homogeneous bundle $\mathfrak H$.} \label{d=de=4}

In this section, we have that $\al : S \ra A$ has degree $d=4$, hence $\sE, \sE'$ have rank equal to $3$ 
and moreover there is a Heisenberg-equivariant exact sequence \eqref{MainSeqGenAss}, namely
\begin{equation*}
	0 \ra \mathfrak H' \ra \sL \otimes V^\vee   \ra  \sE'\ra 0.
\end{equation*}
Since $\sL \otimes V^\vee$ has rank $\de$, $\mathfrak H'$ has rank $\de - 3$, and by definition
it is a succesive extension of line bundles in $Pic^0(A')$.

By Casnati-Ekedahl we must have either 
\begin{enumerate}
	\item $ \det(\sF) = \det (\sE')= \de D $, or 
	
	\item  $ \det(\sF) = \det (\sE')= \de D - M$,  for $M$ a nontrivial line bundle in $Pic^0(A')$.
\end{enumerate}

Again by \cite{c-e} we have the following formula
 $$c_2(\sF) = \de^2 \ K_S^2   -2 c_1(\sE')^2 + 4 c_2(\sE')) = \de^2 ( \ K_S^2 - 4).$$

In the first case a  natural choice is to take $\sF= \hol_{A'}(2D) \oplus \hol_{A'}(2D)$,
and to take the inclusion  $\sF \subset S^2(\sE')$ as induced by a two dimensional
subspace of $S^2( V^\vee )$. We have then, by the above formula, $\de=4, K_S^2=6$.

\subsection{The case $d=\de=4$}

 We shall show that this case  
	occurs for a polarization  of type $(1,4)$, but without
	yielding   $p_g=q=2$.
	
	Here,  since $\de=4$,  $ \mathfrak H'$ is a line bundle in $Pic^0(A')$.

There are again two cases: 
\begin{enumerate}
	\item $ \mathfrak H' = \hol_{A'}$,
	\item $ \mathfrak H' = \hol_{A'}(M)$, with $M$ nontrivial in $Pic^0(A')$.
\end{enumerate}

Consider  the first case with $\sF= \hol_{A'}(2D) \oplus \hol_{A'}(2D)$. Then, by the formulae of \cite{c-e},
not only $K_S^2=6$, but also $\chi(S) = 1$.

Here $S' \subset \PP(V) \times A'$ is the complete intersection of three divisors of respective classes
$2H, 2H, H + D$ and more precisely 
$$ S' = \{ (y,z) \in \PP(V) \times A' \ | \ Q_1(y)= Q_2(y)= \sum_{j=1}^4 y_j x_j(z)=0\},$$
where $x_1, \dotso, x_4$ is the canonical basis of $V = H^0(A', \hol_{A'}(D))$,
 $y_1, \dotso, y_4$ the dual basis of $V^\vee$, 
and the subspace generated by the quadrics $Q_1(y), \ Q_2(y)$ is Heisenberg invariant.

Hence, $S' \subset C \times A'$, where $C$ is an elliptic normal quartic which is Heisenberg invariant.

We see immediately that the polarization $D$ is of type $(1,4)$ since for type $(2,2)$
we would have $G = (\ZZ/2)^4$ acting faithfully on $\PP^3=\PP(V)$, while there is no faithful action
of $ (\ZZ/2)^4$ on an elliptic curve $C$.

By classical formulae, see  \cite{hulek}, page 28, we have that by Heisenberg invariance
the two quadratic equations are:

\begin{equation}
	\begin{split}
		Q_1(y) : = y_1^2 + y_3^2 + 2 \la y_2 y_4=0, \\
		Q_2(y) : = y_2^2 + y_4^2 + 2 \la y_1 y_3=0,
	\end{split}
	 \qquad \la \neq 0, \pm 1, \pm i.
\end{equation}

  The group  $G = (\ZZ/4)^2$ acts by translations on the normal elliptic curve $C$ of degree $4$, 
$$ C : =  \{ y   \in \PP^3 = \PP(V) \ | \  Q_1(y) = Q_2(y)  =0   \},$$
hence the quotient surface  $S: = S' /G$ has $q(S)=3$ and $\chi(S)=1, K_S^2=6$.

 There remains to show that, for a general choice of $\la \in \CC$, $S'$ is smooth.

To this purpose we apply the Theorem of Bertini-Sard, and we need only to show 
that the singular locus of 
$$\sS : =  \{ (y,z,\la ) \in \PP(V) \times A' \times \CC \ | \  y_1^2 + y_3^2 + 2 \la y_2 y_4 = y_2^2 + y_4^2 + 2 \la y_1 y_3= \sum_{j=1}^4 y_j x_j(z)=0\}$$
does not map surjectively onto the complex line $\CC$ 	with coordinate $\la$.

We define $\sC \subset \PP^3 \times \CC $ as the family of normal elliptic quartics given by
the	 zero set
\begin{equation}
	\begin{cases}
		Q_1(y,\la):= y_1^2 + y_3^2 + 2 \la y_2 y_4 =0 \\
		Q_2(y,\la):= y_2^2 + y_4^2 + 2 \la y_1 y_3=0
	\end{cases}
\end{equation}

\begin{remark}
	$\sC$ is birational to a smooth quartic surface $\sC'$ in $\PP^3$, defined by the following equation 
	\begin{equation*}
		(y_1^2+y_3^2)y_1y_3= (y_2^2+y_4^2)y_2y_4,
	\end{equation*}
	since 
	\[
	Q_1(y,\la)=Q_2(y,\la)=0 \quad \iff \quad -2\la = \frac{y_1^2+y_3^2}{y_2y_4}=\frac{y_2^2+y_4^2}{y_1y_3}.
	\]
	Hence, $\sS$ is a hypersurface in the $4$-fold $\sC \times A'$.
\end{remark}

We observe that 
\[
\Sing(\sS)=\{(y,\la, z) \in \sC\times A' \ | \ z \in \Sing(D_y) \text{ and } \rank (M) =2\},
\]
where $D_y:=\{z\in A' \ | \ \sum_j y_jx_j(z)=0\}$ and 
\[
M:=
\begin{pmatrix}
	2y_1	& 2\la y_4	& 2y_3	& 2\la y_2	& 2y_2y_4 \\
	2\la y_3	& 2 y_2	& 2 \la y_1	& 2 y_4	& 2y_1y_3 \\
	x_1		& x_2	&	x_3& 	x_4&	0
\end{pmatrix}.
\]
Since for  $\la \neq 0, \pm 1, \pm i$ the curve $\{Q_1(y,\la)=Q_2(y,\la)=0\}$ is smooth, the first two rows $M_1,M_2$ of $M$ are
then  linearly independent and then, on an open set of $\sC$,  it must hold
\[
x=y_1y_3\cdot M_1 - y_2y_4 \cdot M_2.
\]
Hence, writing $x$ as a column, we have 
\[
x=
\begin{pmatrix}
	y_1^2y_3 - \la y_2y_3y_4 \\
	-y_2^2y_4 + \la y_1y_3y_4\\
	y_1y_3^2 - \la y_1y_2y_4\\
	-y_2y_4^2 +\la y_1y_2y_3
\end{pmatrix}=:	\beta(y,\la)
\]
That is, we have  a rational map
\[
\beta \colon \sC \dashrightarrow \PP^3
\]
such that 
\[
x=\beta(y,\la)=\beta_0(y)+\la \beta_1(y),
\]
where clearly
\[
\beta_0(y):={^t}(y_1^2y_3 , \ -y_2^2y_4, \   y_1y_3^2 , \  -y_2y_4^2),
\]
\[
\beta_1(y):={^t}(-  y_2y_3y_4, \  y_1y_3y_4, \ -  y_1y_2y_4, \  y_1y_2y_3).
\]

Recalling that
\[
-2\la = \frac{y_1^2+y_3^2}{y_2y_4}=\frac{y_2^2+y_4^2}{y_1y_3},
\]
we can write
\[
x=2 \beta_0(y) - \frac{y_1^2+y_3^2}{y_2y_4} \beta_1(y)= 2y_2y_4 \beta_0(y)  - (y_1^2+y_3^2) \beta_1(y),
\]
and this shows that the rational map $\beta$ is given by homogeneous polynomials of degree $5$.

Recall that, for a general pair $(A',D)$, the image $\Sigma$ of the map associated to the linear system $|D|$, namely
\[
x\colon A' \ra \Sigma \subset \PP^3, \qquad z \mapsto (x_1(z), \ x_2(z), \ x_3(z),  \ x_4(z)),
\]
is an octic surface in $\PP^3$ whose equation depends on some $c=(c_0, \dotso, c_3) \in \PP^3$ (see \cite{b-l-s}).

Let $\Delta\subset \PP^3$ be the discriminant of the linear system $|D|$, namely
\[
\Delta:=\lbrace y \ | \ D_y \text{ is singular} \rbrace.
\]
Hence, we define the following divisors in $\sC$
\[
N_1 :=  \beta^{-1}(\Sigma), \qquad N_2:=\sC \cap \Delta 
\]
($N_2$ is the birational inverse image of $  \sC' \cap \Delta$).

Moreover, we have the equation  (asserting that $x = \beta(y,\la)$ belongs to the plane $y^{\perp}$)
\[
y \cdot \beta(y,\la)=0, 
\]
defining
\[
N_3:=\{(y,\la) \in \sC \ | \ y \cdot \beta(y,\la)=0 \} \subset \sC.
\]

\begin{remark}
	A straightforward computation shows that actually $N_3 = \sC$. In fact:
	\[
	y \cdot \beta(y,\la)=0 \iff y_1^3y_3- y_2^3y_4 + y_1y_3^3 - y_2y_4^3=0 \iff 	(y_1^2+y_3^2)y_1y_3= (y_2^2+y_4^2)y_2y_4.
	\]
\end{remark}

Therefore, $\Sing(\sS)$ does not map onto $\CC$ if
\[
| N_1 \cap N_2| < \infty.
\] 
\begin{remark}
	(a) Certainly, $N_2$ is a curve, since the surfaces $\Sigma$ vary, hence their discriminants, and $\sC$, $\Sigma$ are irreducible. By a similar argument also $N_1$ is a curve in $\sC$ ($\Sigma$ moves).
	
	(b) We should also impose the condition that $x^{\perp}$ is	 tangent to $\Delta$ at $y$.
	
	Since $\Delta$ is the dual surface to $\Sigma$, this condition means that $y^{\perp}$ is tangent to $\Sigma$ at $x$.
\end{remark}

It suffices in any case to   show that $N_1\cap N_2$ is a finite set for a general $A'$.

Let $F(c,x)=0$, for $c \in \PP^3, x \in \PP^3$, be   the equation of the family of octics $ \Sigma_c $, given   in \cite{b-l-s};
let $ p(y) =0, \ y \in \PP^3,$ be the equation for the surface  $ \sC'$.

Since $\Delta$ is the dual surface of  $ \Sigma$, we denote by $ \nabla F$ the gradient with respect 
to the variables $ x$, and we set
$$y = \nabla F (c,x) .$$

Consider then the three equations
\begin{equation} \label{script}
	\begin{split}
		&F(c,x)=0,  \\
		&p' : = p(  \nabla F (c,x) )=0,\\
		&F':= F(c, \beta (  \nabla F (c,x) ) = 0,
	\end{split}
	\tag{$\star \star \star$}
\end{equation}
which, for a general $c\in \PP^3$, describe the set $N_1\cap N_2$.

Since our aim is to show that, for a general $c$, we have a	 finite number of solutions, we 
view the equations \eqref{script} as equations on  $  \PP^3 \times \PP^3$, describing
$$ W : = \{ (c,x) \in  \PP^3 \times \PP^3 | F(c,x)= p'(c,x)= F'(c,x)=0\}.$$
Hence, our claim is equivalent to showing that the components of $W$ which  dominate the 
$ \PP^3$ with coordinates $c$ have 
 dimension $3$.
 
We therefore need to calculate the Jacobian matrix of the vector valued function $(F, p',F')$,
and show that this matrix has generically rank equal to three.

This should  be done by a computer algebra program.

\bigskip

\bigskip

\bigskip

{\bf Acknowledgement:} Thanks to Matteo Penegini for asking whether  a simple construction
exists  in the case of CHPP surfaces for which  the Albanese map (of $S'$)  is a Galois covering of degree $3$,
and some useful discussion.

The first named author would also like to thank him for support and encouragement.

He also gratefully acknowledges partial financial support by PRIN 2020KKWT53 003 - Progetto \em{Curves, Ricci flat Varieties and their Interactions}.

\end{document}